\renewcommand\appendix{\setcounter{secnumdepth}{-2}}
\newtheorem{thm}{Theorem}[section]
\newtheorem{lem}[thm]{Lemma}
\newtheorem{prop}[thm]{Proposition}
\newtheorem{cor}[thm]{Corollary}
\newtheorem{ex}[thm]{Example}
\theoremstyle{definition}
\newtheorem{defn}[thm]{Definition}
\newcommand{\gen}{\mathrm{gen}}
\newcommand{\ord}{\mathrm{ord}}
\newcommand{\pic}{\mbox{Pic}}
\newcommand{\gal}{\mbox{Gal}}
\newcommand{\rank}{\mathrm{rank}}
\newcommand{\Q}{{\mathbb Q}}
\newcommand{\cO}{{\mathcal O}}
\numberwithin{equation}{section}
\begin{document}

\author{Zilong He}
\address{}
\email{}
\author{Yong Hu}
\address{ }
\email{ }
\author{Fei Xu}
\address{ }
\email{ }
\title[ ]{On indefinite $k$-universal integral quadratic forms over number fields}
\thanks{ }
\subjclass[2010]{11E12, 11E08, 11E20, 11R11}
\date{\today}
\keywords{integral quadratic forms, local-global principle, integral representation, universal quadratic forms, quadratic fields}
\begin{abstract}
 An integral quadratic lattice is called indefinite $k$-universal if it  represents all integral quadratic lattices of rank $k$ for a given positive integer $k$.

 For $k\geq 3$, we prove that the indefinite $k$-universal property satisfies the local-global principle over number fields.

 For $k=2$, we show that a number field $F$ admits an integral quadratic lattice which is locally $2$-universal but not indefinite 2-universal if and only if the class number of $F$ is even. Moreover, there are only finitely many classes of such lattices over $F$.

 For $k=1$, we prove that $F$ admits a classic integral lattice which is locally classic $1$-universal but not classic indefinite $1$-universal if and only if $F$ has a quadratic unramified extension where all dyadic primes of $F$ split completely. In this case, there are infinitely many classes of such lattices over $F$. All quadratic fields with this property are determined.

\end{abstract}
\maketitle

\section{Introduction}

As an extension of the four squares theorem of Lagrange, Ramanujan  determined in \cite{Ram} all diagonal positive definite quaternary integral quadratic forms which represent all positive integers. Later, Dickson and his school called these forms universal, and  further studied  forms of this kind extensively in \cite{Di-27}, \cite{Di}, \cite{Ro}, \cite{Will} etc. in both the positive definite and the indefinite cases. Instead of classifying all universal quadratic forms, Conway and Schneeberger provided a surprisingly simple criterion, according to which a classic positive quadratic form is universal if and only if it represents all positive integers up to 15. In \cite{Bh}, Bhargava gave a simple proof of the Conway-Schneeberger theorem, and then corrected and completed the previous results in \cite{Will} by using this theorem.

Representation of an integral quadratic form by another is a natural generalization of representation of an integer by an integral quadratic form. Generalizing Lagrange's four squares theorem in this direction, Mordell proved that every binary positive classic quadratic form can be represented by the sum of five squares in \cite{Mor}.  Ko \cite{Ko} further proved that every $k$-ary positive classic quadratic form can be represented by the sum of $k+3$ squares for $3\leq k\leq 5$. A general representation theory for positive definite quadratic forms has been established by Hsia, Kitaoka and Kneser  in \cite{hsia_positivedefinite_1998}. Their result has been improved by Ellenberg and Venkatesh over $\Bbb Z$ in \cite{EV}. A natural extension of universal form to higher dimensions is a positive integral quadratic form representing all $k$-ary positive integral quadratic forms for a given positive integer $k$. B. M. Kim, M.-H. Kim and S. Raghavan called such a quadratic form $k$-universal and classified all 2-universal positive quinary  diagonal   quadratic forms over $\Bbb Z$ in \cite{kim_2universal_1997}. B. M. Kim, M.-H. Kim and B.-K. Oh \cite{kim_2universal_1999} further determined all 2-universal quinary positive classic quadratic forms. They also established an analogue of Conway-Schneeberger's theorem for 2-universal forms over $\Bbb Z$. It is proved in \cite{Oh} that a positive classic quadratic form  over $\Bbb Z$ is $8$-universal if and only if it represents the two special forms $I_8$ and $E_8$. We refer the reader to the survey article \cite{Kim} for the case $k\geq 3$ as well as some related results over other number fields.

A natural question of universal property arises for indefinite integral quadratic forms over number fields. Since the ring of integers of a number field is not necessarily a principal ideal domain in general, one can use integral quadratic lattices in a more general setting. In order to distinguish from the positive definite case, we call an integral quadratic lattice \emph{indefinite $k$-universal} if it represents all integral quadratic lattices of rank $k$ for a given positive integer $k$. Indeed, when $k=1$, such universal property has been studied in \cite{xu_indefinite_2020}. It turns out that indefinite 1-universal property satisfies the local-global principle over $\Bbb Z$ by strong approximation for spin groups. The local universal conditions have been given in \cite{beli_universal_2020}, \cite{EG} and \cite{xu_indefinite_2020}. Therefore this provides an effective algorithm for determining an indefinite 1-universal form over $\Bbb Z$.
However, the local-global principle is not always true over general number fields. For example, Estes and Hsia have shown that the sum of three squares over certain imaginary quadratic fields is locally 1-universal but not indefinite 1-universal in \cite{EH0} and \cite{EH}. In \cite{xu_indefinite_2020}, the third named author and Zhang have proved that
a number field $F$ admits an integral lattice which is locally 1-universal but not indefinite 1-universal if and only if the class number of $F$ is even.
In this paper, we will extend this result to $k\geq 2$ (see Corollary \ref{2.3} and Theorem \ref{5.1}).

\begin{thm} \label{1.1} Let $k$ be a positive integer and $F$ be a number field.

(1) For $k\ge 3$, an integral quadratic lattice over $F$ is indefinite $k$-universal if and only if it is locally $k$-universal.

(2) There exists an integral quadratic lattice which is locally $2$-universal but not indefinite $2$-universal over $F$ if and only if the class number of $F$ is even.  In this case, there are only finitely many classes of such integral quadratic lattices over $F$.
\end{thm}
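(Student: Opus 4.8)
My plan is to reduce both parts to a single mechanism: a local--global principle for \emph{lattice} representations whose only possible failure is a relative-spinor-norm (class-group) obstruction attached to the orthogonal complement of the represented sublattice. Write $V=FL$, and for a rank-$k$ lattice $N$ put $U=FN$. In each part the implication ``indefinite $k$-universal $\Rightarrow$ locally $k$-universal'' is immediate: every local lattice of rank $k$ is the localization of a global rank-$k$ lattice $N$, and a global embedding $N\hookrightarrow L$ localizes everywhere. So in all cases the work is in the converse, where $L$ is assumed locally $k$-universal.

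For the converse, fix a rank-$k$ lattice $N$. Local $k$-universality gives $N_{\mathfrak p}\hookrightarrow L_{\mathfrak p}$ at every finite $\mathfrak p$ together with the archimedean signature inequalities, so by Hasse--Minkowski $U$ embeds into $V$; fix such an embedding with complement $W=U^{\perp}$ of rank $n-k$, where $n=\rank L$. Since $V$ is indefinite and $\dim V\geq 3$, the ambient group $\operatorname{Spin}(V)$ satisfies strong approximation, and the theory underlying the $k=1$ case of \cite{xu_indefinite_2020} shows that the everywhere-local representations of $N$ glue to a global one provided the relative spinor norm group of the pair $(L,N)$ exhausts the relevant idele class group. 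The key point is that this relative spinor norm group is automatically everything as soon as $\dim W\geq 3$, whereas for $\dim W=2$ it can be a proper subgroup cut out by a genus character. Note that the spinor norm always lands in $F^{\times}$, so even when $W$ is an anisotropic binary space the obstruction is measured inside the idele class group of $F$ itself, hence by the $2$-part of $\pic(\cO_F)$.

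This immediately yields part (1): for $k\geq 3$ the explicit local universality criteria of \cite{beli_universal_2020,EG,xu_indefinite_2020} force $\rank L\geq k+3$, hence $\dim W\geq 3$, the obstruction vanishes, and $N\hookrightarrow L$. For part (2) the same argument applies verbatim whenever $\rank L\geq 5$, so the only new case is $\rank L=4$. Here local $2$-universality forces $V_{\mathfrak p}$ to be split at every finite $\mathfrak p$, hence $V\cong H\perp H$ is totally hyperbolic and the complement $W$ of a rank-$2$ sublattice is binary; now $\operatorname{Spin}(W)$ is a one-dimensional torus and the relative spinor norm group is a subgroup of the idele class group whose index is governed by $\pic(\cO_F)$ modulo squares. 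When the class number of $F$ is odd this index is $1$, so every locally $2$-universal lattice is indefinite $2$-universal, which proves the ``only if'' of (2) in contrapositive form. When the class number is even I would choose a nontrivial $2$-torsion ideal class and use it, via the totally hyperbolic structure of $V$, to manufacture a rank-$4$ lattice $L$ together with a rank-$2$ lattice $N$ satisfying $N_{\mathfrak p}\hookrightarrow L_{\mathfrak p}$ everywhere but $N\not\hookrightarrow L$ globally; this gives the ``if'' direction. The finiteness claim then follows because every such exceptional $L$ is pinned to rank $4$, to the single space $V\cong H\perp H$, and to a genus with prescribed local invariants, and any genus contains only finitely many classes.

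The main obstacles I anticipate are two. First, pinning down the exact rank threshold, namely that local $k$-universality forces $\rank L\geq k+3$ for $k\geq 3$ but permits $\rank L=k+2$ for $k\leq 2$; this is precisely what separates the codimension-$\geq 3$ case (clean local--global) from the codimension-$2$ case (torus, hence class-group obstruction). Second, in the even-class-number direction, verifying that the lattice $L$ built from a $2$-torsion class is genuinely locally $2$-universal at \emph{every} prime, in particular at the dyadic primes, while simultaneously failing the global representation. It is this dyadic bookkeeping, rather than the strong approximation input, where I expect the real effort to lie.
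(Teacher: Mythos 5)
Your part (1) and the ``only if'' half of part (2) are essentially correct and run along the paper's own lines: for $k\ge 3$, local $k$-universality of the spaces $F_{\mathfrak p}L_{\mathfrak p}$ forces $\rank L\ge k+3$, and then the codimension-$\ge 3$ local--global principle of Hsia--Shao--Xu finishes; for $k=2$ your observation that a binary-complement spinor obstruction requires an everywhere-unramified quadratic extension $E/F$ (hence $2\mid h_F$, since the relative spinor norm groups contain $\cO_{\mathfrak p}^\times$ at finite primes and are all of $F_{\mathfrak p}^\times$ at real primes) is a legitimate variant of the paper's argument, which instead notes that odd class number forces $\gen(L)$ to consist of a single class. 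One slip worth flagging: the bound $\rank L\ge k+3$ does not come from the $1$-universality criteria of \cite{beli_universal_2020}, \cite{EG}, \cite{xu_indefinite_2020} that you cite; it is the classification of $k$-universal \emph{spaces} over local fields (Theorem~\ref{2.1}), which itself needs a proof via Witt cancellation and Theorem~\ref{ind}.

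The genuine gap is the ``if'' half of part (2) together with the finiteness claim: what you call ``dyadic bookkeeping'' and defer is in fact the mathematical core, and it is not routine. To manufacture a $2$-LNG lattice you must exhibit a binary lattice $N$ with $\theta_{\mathfrak p}\bigl(X(M_{\mathfrak p}/N_{\mathfrak p})\bigr)=N_{\mathfrak P\mid \mathfrak p}(E_{\mathfrak P}^\times)$ at \emph{every} finite prime. The inclusion $\supseteq$ is automatic (the stabilizer of the complement already contributes the norm group), so the whole difficulty is to force the relative spinor norm group to be a \emph{proper} index-$2$ subgroup of $F_{\mathfrak p}^\times$ at each dyadic prime where $E/F$ is inert; a generic binary sublattice of $M=2^{-1}A(0,0)\perp 2^{-1}A(0,0)$ fails this (indeed, whenever the orthogonal complement is hyperbolic the group is all of $F_{\mathfrak p}^\times$, as happens at the split dyadic primes). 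The paper achieves it by choosing $N_{\mathfrak p}\cong 2^{-1}A(2,2\rho_{\mathfrak p})$, the $\cO_{\mathfrak p}$-maximal lattice on the anisotropic binary space of unit determinant, and using maximality to show $X(M_{\mathfrak p}/K_{\mathfrak p})=O^+(M_{\mathfrak p})$, whence $\theta=\cO_{\mathfrak p}^\times(F_{\mathfrak p}^\times)^2$; note that, unlike the classic $1$-LNG situation of Theorem~\ref{6.1}, one cannot arrange for $E/F$ to split at all dyadic primes, so this inert case is unavoidable. Your finiteness argument has a parallel gap: it presupposes that rank-$4$ locally $2$-universal lattices lie in finitely many genera, which at dyadic primes is exactly Proposition~\ref{4.5} (the classification $M_{\mathfrak p}\cong 2^{-1}A(0,0)\perp 2^{-1}A(0,0)$ of quaternary $2$-universal dyadic lattices), a nontrivial theorem; without it you cannot exclude candidates such as $2^{-1}A(0,0)\perp\langle \epsilon\rangle\perp\langle -\epsilon\pi_{\mathfrak p}^{2r}\rangle$ for arbitrary $r$, so ``a genus contains only finitely many classes'' does not yet bound the number of $2$-LNG classes.
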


Saying  a quadratic lattice over $F$ is integral means that the norm of the lattice is contained in the ring of integers of $F$ (see Definition \ref{int}).
One can also define the notion of classic indefinite $k$-universality (see Definition\;\ref{1.3}) by restricting to classic integral lattices (see Definition \ref{int}). For the classic indefinite universal property, the first part of Theorem\;\ref{1.1} is still true but the second part is not. In fact, there is no classic $2$-universal quaternary lattice over a dyadic local field (Proposition \ref{4.6}).

As a complement to Theorem\;\ref{1.1} and \cite[Remark 1.2]{xu_indefinite_2020}, we also obtain a necessary and sufficient condition for number fields which admit classic indefinite $1$-universal lattices (see Theorem\;\ref{6.1}).

\begin{thm}  \label{1.2}  A number field $F$ admits a classic integral lattice which is locally classic $1$-universal but not classic indefinite 1-universal if and only if $F$ has a quadratic unramified extension where all dyadic primes of $F$ split completely. In this case, there are infinitely many classes of such integral quadratic lattices over $F$.
\end{thm}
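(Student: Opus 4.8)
The plan is to translate the statement, via strong approximation for the spin group, into a computation of (relative) spinor norms at the completions of $F$. A classic locally $1$-universal lattice $L$ necessarily has $\rank L\ge 3$, since classic $1$-universality already forces this at the dyadic completions, and we may take $L$ indefinite at an archimedean place; hence the spin group of $L\otimes F$ satisfies strong approximation. Therefore $L$ represents $a\in\cO_F$ if and only if $\spn(L)$ does, while local $1$-universality ensures every $a$ is represented by $\gen(L)$. Thus the only way $L$ can fail to be classic $1$-universal is to miss a \emph{spinor exceptional} integer $a$. By the standard theory for $\rank\ge 3$, such an $a$ occurs precisely when $E_a:=F(\sqrt{c\,a\,d(L)})$ (with $c$ an explicit unit depending on $\rank L$ and $d(L)$ the discriminant) is a genuine quadratic extension and the relative spinor norms satisfy $\theta^*(L_\p,a)=N_{E_{a,\p}/F_\p}(E_{a,\p}^*)$ for all $\p$; in that case exactly half of the spinor genera in $\gen(L)$ represent $a$.

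The key local input, and the main obstacle, is the dyadic computation: \textbf{any classic $1$-universal lattice over a dyadic local field $F_\p$ has full spinor norm $\theta(O^+(L_\p))=F_\p^*$.} I would prove this by writing down the Jordan splitting forced by classic $1$-universality over $\cO_\p$ — a unimodular component of sufficiently large rank together with components realizing a uniformizer as a represented norm — and then invoking the dyadic spinor-norm formulas (O'Meara--Beli) to check that both $\cO_\p^*$ and a uniformizer lie in $\theta(O^+(L_\p))$. Granting this, since $\theta^*(L_\p,a)\supseteq\theta(O^+(L_\p))=F_\p^*$ at each dyadic $\p$, the matching condition above forces $N_{E_{a,\p}/F_\p}(E_{a,\p}^*)=F_\p^*$, i.e. $\p$ splits completely in $E_a$. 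At a non-dyadic finite $\p$, classic $1$-universality provides a unimodular component of rank $\ge 3$, so $\theta(O^+(L_\p))\supseteq\cO_\p^*$ and $E_a$ is unramified at $\p$. Choosing $L$ indefinite at an archimedean place makes the spinor norm full there as well. Hence $E_a$ is unramified at all finite primes and splits completely at every dyadic prime.

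For $(\Leftarrow)$, suppose $F$ carries such an extension $E=F(\sqrt{m})$. I would construct a classic genus whose relevant extension is exactly $E$: start from a classic lattice that is locally $1$-universal everywhere, prescribe its local spinor norms to be the unramified norm groups $N_{E_\p/F_\p}(E_\p^*)$ at non-dyadic primes and $F_\p^*$ at dyadic primes (legitimate by the dyadic lemma), and fix an indefinite archimedean signature. Choosing $a\in\cO_F$ in the square class of $m/(c\,d(L))$ gives $E_a=E$, so $a$ is spinor exceptional; as every spinor genus is a single class by strong approximation, any class lying in a spinor genus that does not represent $a$ is a classic locally $1$-universal lattice that is not classic indefinite $1$-universal. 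Conversely, for $(\Rightarrow)$ a failure yields a spinor exceptional unrepresented $a$ whose $E_a$, by the local computations of the second paragraph, is unramified at all finite primes and split at the dyadic ones, producing the required extension.

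For the count, the $k=2$ rigidity is absent because here the data $a$ and $d(L)$ enter only through the single square class pinning down $E_a=E$. I would produce infinitely many classes by rescaling and by enlarging $L$ with auxiliary unimodular components (or by varying $d(L)$ within its allowed square class): each operation preserves classic local $1$-universality, keeps $E_a=E$, and retains a spinor exceptional integer, while changing the isometry class and giving a new genus. Since each such genus contributes at least one class missing its spinor exceptional integer, this yields infinitely many classes of classic locally $1$-universal, non-$1$-universal lattices over $F$.
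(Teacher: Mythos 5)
Your overall framework coincides with the paper's: both directions are driven by the Hsia--Shao--Xu spinor-exception theory (\cite[Theorem 4.1]{hsia_indefinite_1998}) together with the key dyadic fact that a classic $1$-universal lattice over a dyadic local field has full spinor norm. Your necessity argument is essentially the paper's (the paper proves the dyadic fact more simply than your Jordan-splitting plan: classic integrality makes $\tau_v$ an isometry of $L_\p$ for every $v\in L_\p$ with $0\le\ord_\p(Q(v))\le 1$, and $1$-universality supplies such $v$ in all the needed square classes, so $\theta_\p(O^+(L_\p))=F_\p^\times$ with no case analysis). However, there are two genuine gaps. The first is that you never pin the rank down to exactly $3$, and this breaks your infinitude argument. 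Spinor exceptional integers exist only in codimension at most $2$: by the very result you invoke (\cite[p.135]{hsia_indefinite_1998}), an indefinite locally $1$-universal lattice of rank $\ge 4$ represents every integer, i.e.\ it \emph{is} indefinite $1$-universal. Hence ``enlarging $L$ with auxiliary unimodular components'' does not produce new $1$-LNG lattices --- it destroys the property outright. Rescaling must be by units (otherwise local $1$-universality is lost), and $\cO_F^\times/(\cO_F^\times)^2$ is finite, so that yields only finitely many genera. Only your parenthetical option, varying the determinant within its square class, can work; the paper implements it by passing to sublattices $P_i\subset P$ equal to $\langle 1,-1,-\beta_{\frak q_i}\pi_{\frak q_i}^2\rangle$ at infinitely many auxiliary primes $\frak q_i$ (chosen by Chebotarev so the $P_i$ are even free), which remain locally $1$-universal, have pairwise distinct determinants, and --- being sublattices of $P$ --- automatically fail to represent $1$, so no spinor-norm computation has to be redone at the modified primes.

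The second gap is in sufficiency: you treat the existence of the required genus as a matter of ``prescribing'' local spinor norms, but that is the technical heart of the proof. What must \emph{equal} $N_{\frak P\mid\frak p}(E_\frak P^\times)$ at every finite prime is the relative spinor norm $\theta_\p(X(M_\p,1))$, not $\theta_\p(O^+(M_\p))$; the latter is merely contained in the former, and containment is useless since strict inequality at a single prime kills the exception. Computing the relative norms requires \cite[Satz 3(a)]{schulzepillot_Darstellung_1980} at non-dyadic primes and \cite[Theorem 2.1]{Xu4} at dyadic primes, and one must simultaneously exhibit a \emph{ternary} classic lattice on a fixed space that is classic $1$-universal at every completion --- delicate at dyadic primes precisely because $1$-universality there forces full spinor norm, which is compatible with the matching condition only because the dyadic primes split in $E$. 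The paper does this with a concrete lattice on $[1,-1,-b]$, $E=F(\sqrt{-b})$: unimodular at non-dyadic primes, and at dyadic primes $A(1,-\pi_\p)\perp\langle -(1+\pi_\p)\beta_\p\rangle$ or $A(\kappa_\p,-\kappa_\p^{-1}\pi_\p)\perp\langle -(1+\pi_\p)\beta_\p\rangle$ according to a Hilbert symbol, with Lemma \ref{4.1} verifying both local $1$-universality and the spinor-norm identities. Your plan gives no such construction, so the existence claim is not established.
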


We also classify all quadratic fields with this property (see Theorem\;\ref{6.4}).

\

Unexplained notation and terminology are adopted from \cite{omeara_quadratic_1963}. Let $F$ be a number field with ring of integers $\mathcal{O}_{F} $, $\Omega_{F} $  the set of all primes of $ F $ and $ \infty_{F} $ the subset of archimedean primes. For any $ \mathfrak{p}\in \Omega_{F} $, we denote by $ F_{\mathfrak{p}} $ the completion of $ F$ at $\frak p$ and by $F_\frak p^\times$ the set of non-zero elements of $F_\frak p$.
If $\frak p\in \infty_F$, we put $\mathcal O_{\frak p}=F_{\frak p}$.  If $ \mathfrak{p}\in \Omega_{F} \setminus \infty_F$, we denote by $ \mathcal{O}_{\mathfrak{p}} $ the valuation ring of
$F_{\mathfrak{p}} $ and $ \mathcal{O}_{\mathfrak{p}}^{\times} $ the group of units. By abuse of notation, sometimes we also write $\frak p$ for the maximal ideal of $\cO_{\frak p}$. The normalized discrete valuation of $F_{\frak p}$ is denoted by $\ord_{\frak p}$. We write $\pi_{\frak p}$ for a uniformizer of $F_{\frak p}$ and  fix a non-square unit $\Delta_\frak p\in \mathcal{O}_{\frak p}^\times$ such that the quadratic extension $F_\frak p(\sqrt{\Delta_\frak p})/F_\frak p$ is unramified. For $\alpha, \beta\in F_\frak p^\times$, we write $(\alpha, \beta)_\frak p$ for the Hilbert symbol.

Let  $V$ be a quadratic space over $F$ (resp. $F_\frak p$), by which we mean a finite dimensional vector space endowed with a non-degenerate symmetric bilinear form $$B: V\times V\to F \ (\text{resp.} \ F_\frak p)  \ \ \text{with the associated quadratic form} \ \  Q(x)=B(x,x),\, x\in V . $$ We denote by $\det(V)$ the \emph{determinant} (or \emph{discriminant} in the terminology of \cite[\S\;41.B, p.87]{omeara_quadratic_1963}) of $V$.  A two dimensional quadratic space is called a hyperbolic plane and  denoted by $\Bbb H$ if it contains two vectors $x, y$ satisfying $Q(x)=Q(y)=0$ and $B(x,y)=1$.  For a quadratic space $V$ over $F_{\frak p}$, let $S_\frak p(V)$ be its Hasse symbol.

  An $\cO_F$-lattice (resp. $\cO_{\frak p}$-lattice) $L$ in $ V $ is a finitely generated $\mathcal{O}_{F}$-module (resp. $ \mathcal{O}_{\mathfrak{p}} $-module for $ \mathfrak{p}\in \Omega_{F} \setminus \infty_F$) inside $V$.   If $FL=V$ (resp. $F_\frak p L=V$), we say that $L$ is a lattice on $V$. If the quadratic space $V$ has  an orthogonal basis  $ x_{1},\ldots, x_{n} $ with $ Q(x_{i})=a_{i}$ for $1\leq i\leq n$, we simply write $[a_{1},\ldots,a_{n}] $ for $V$.  A lattice with such an orthogonal basis will be denoted by $  \langle a_{1},\ldots,a_{n}\rangle $.
 The \emph{scale} $\mathfrak{s}(L)$ and the \emph{norm} $\mathfrak{n}(L)$ of a lattice $L$ are defined as the fractional ideals
 $$ \mathfrak{s}(L) =B(L,L) \mathcal{O}_{F} \ (\text{resp.} \ B(L,L) \mathcal{O}_{\mathfrak{p}})  \ \ \ \text{and}  \ \  \  \mathfrak{n}(L) =Q(L)\mathcal{O}_{F} \ (\text{resp.} \ Q(L) \mathcal{O}_{\mathfrak{p}}). $$

\begin{defn} \label{int}  Let $L$ be an $\cO_F$-lattice (resp. $\cO_{\frak p}$-lattice).

(1)  We say  $L$ is \emph{integral}
 if $\mathfrak{n}(L)\subseteq \mathcal{O}_{F}$ (resp. $\mathfrak{n}(L)\subseteq   \mathcal{O}_{\mathfrak{p}}$).

 (2) A lattice $L$ is called \emph{classic integral} if $\mathfrak{s}(L)\subseteq \mathcal{O}_{F}$ (resp. $\mathfrak{s}(L)\subseteq   \mathcal{O}_{\mathfrak{p}}$).
 \end{defn}

 Let $W$ be another quadratic space over $F$ with the associated quadratic form $Q'$. We say that $W$ is represented by $V$ if there is an $F$-linear map $\sigma: W\to V$ such that $Q\circ \sigma=Q'$. Such a map $\sigma$ is called a \emph{representation} of $W$ into $V$. For two lattices $L$ and $M$, we say that $L$ is represented by $M$, written as $L\to M$, if there is a representation $$\sigma: FL\to FM \ \ \ \text{ such that } \ \ \ \sigma L\subseteq M. $$
The \emph{orthogonal group} $O(V)$ and the \emph{special orthogonal group} $O^+(V)$ of $V$ are defined by $$ O(V)=\{ \sigma \in GL(V): \ Q\circ \sigma=Q\}  \ \ \ \text{and} \ \ \ O^+(V)= \{ \sigma \in O(V): \ \det(\sigma)=1 \}\,. $$ The symmetry $\tau_u \in O(V)$ with respect to a vector $u\in V$ such that $Q(u)\neq 0$ is defined by
$$ \tau_u(x) = x-\frac{2B(x, u)}{Q(u)} u; \ \ \ \forall x\in V . $$
Write
$$O(L)= \{ \sigma\in O(V): \ \sigma L= L \} \ \ \ \text{and} \ \ \ O^+(L)=\{ \sigma \in O^+(V): \ \sigma L= L \} $$ for a given lattice $L$ on $V$.
Denote by $$\theta: \ O(V)\rightarrow F^\times / (F^\times)^2  $$  the spinor norm map of $V$ (cf. \cite[p.137, \S\;55]{omeara_quadratic_1963}).
For each prime $\frak p\in\Omega_F$, let $$V_{\frak p}=F_{\frak p}\otimes_F V \ \ \ \text{  and  } \ \ \  L_{\frak p}=\cO_{\frak p}\otimes_{\cO_F}L .$$ Then $O(V_{\frak p}),\,O^+(V_{\frak p})$, $O(L_{\frak p}),\,O^+(L_{\frak p})$,  $\theta_\frak p$ and $\tau_{u_\frak p}$ for $u_\frak p\in V_\frak p$ with $Q(u_\frak p)\neq 0$ are defined similarly.
The adelic groups of $O(V)$ and $O^+(V)$ are denoted by $O_\Bbb A(V)$ and $O^+_\Bbb A(V)$ respectively. They act naturally on a given lattice $L$ on $V$.
The orbit of $L$ under the action of $O_\Bbb A(V)$ (or $O^+_\Bbb A(V)$) is called the \emph{genus} of $L$ and denoted by $\gen(L)$ (see \cite[\S 102 A]{omeara_quadratic_1963}).

\medskip

 \begin{defn}\label{1.3}  Let $k\ge 1$ be an integer.

 (1) A quadratic space over $F$ (resp. $F_\frak p$) is called \emph{$k$-universal} if it represents all $k$-dimensional quadratic spaces over $F$ (resp. $F_\frak p$). When $k=1$, we simply say universal instead of $1$-universal.

(2)  An integral (resp. classic integral) $\mathcal{O}_{\mathfrak{p}}$-lattice is called \emph{$k$-universal (resp. classic $k$-universal)} if it represents all integral (resp. classic integral) $\mathcal{O}_{\mathfrak{p}}$-lattices of rank $k$ for $ \mathfrak{p}\in \Omega_{F} \setminus \infty_F$.

(3)  An integral (resp. classic integral) $\mathcal{O}_{F}$-lattice is called \emph{indefinite (resp. classic  indefinite) $k$-universal} if it represents all integral (resp. classic integral) $\mathcal{O}_{F}$-lattices of rank $k$.

(4) An integral (resp. classic integral)  $\mathcal{O}_{F}$-lattice $L$ is called \emph{$k$-LNG (resp. classic  $k$-LNG)} if $L_{\frak p}=\mathcal{O}_{\frak p}\otimes_{\mathcal{O}_F} L$ is $k$-universal (resp. classic $k$-universal) over $\mathcal{O}_{\frak p}$ for all $\frak p\in \Omega_F$ but $L$ is not indefinite $k$-universal (resp. classic indefinite $k$-universal).
\end{defn}

The paper is organized as follows. We first study $k$-universal spaces over a local field in Section\;\ref{sec2}. A consequence of the main result of this section is the first part of Theorem \ref{1.1}. In Section\;\ref{sec3}, we establish a local analogue of the Conway-Schneeberger theorem and determine all $k$-universal lattices over non-dyadic local fields in terms of Jordan decompositions. Over  dyadic local fields, 2-universal quaternary lattices are classified in Section\;\ref{sec4}. We show in Section\;\ref{sec5} that these results lead to a proof of the second part of  Theorem \ref{1.1}. In Section\;\ref{sec6}, we give a proof of Theorem \ref{1.2} and determine all quadratic fields which admit classic $1$-LNG lattices.

%

\section{On $k$-universal spaces}\label{sec2}

Before  studying the universal theory for integral quadratic forms, one needs to understand the corresponding theory for quadratic spaces. By the Hasse-Minkowski Theorem (see \cite[66:3.Theorem]{omeara_quadratic_1963}),  the $k$-universal property of a quadratic space over $F$ can be detected at each local completion $F_{\frak p}$ of $F$.

For convenience, we recall some known results about quadratic spaces.  It is well-known that the isometry class of any quadratic space over a local field is determined completely by its dimension,  determinant and Hasse symbol (see \cite[63:20 Theorem]{omeara_quadratic_1963}). These three invariants are independent of each other except in the one dimensional or the hyperbolic plane case.

\begin{thm} \label{ind} (\cite [63:22 Theorem]{omeara_quadratic_1963}) Let $V$ be a quadratic space over $F_\frak p$ with $\frak p\in \Omega_F\setminus \infty_F$. If $\dim(V)>1$ and  $V$ is not a hyperbolic plane, then there is a quadratic space $V'$ over $F_\frak p$ such that
$$ \dim(V')=\dim(V), \ \ \  \det(V')=\det(V) \ \ \ \text{and} \ \ \ S_\frak p(V')= - S_\frak p(V) . $$
\end{thm}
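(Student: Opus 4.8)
The plan is to reduce everything to a computation with binary spaces, exploiting the behaviour of the Hasse symbol under orthogonal sums. Recall the standard formula: if $V\cong U\perp W$ over $F_\frak p$, then
$$\det(V)=\det(U)\det(W) \quad\text{and}\quad S_\frak p(V)=S_\frak p(U)\,S_\frak p(W)\,(\det(U),\det(W))_\frak p .$$
Hence it suffices to find an orthogonal decomposition $V\cong U\perp W$ with $\dim U=2$ and a binary space $U'$ satisfying $\det(U')=\det(U)$ and $S_\frak p(U')=-S_\frak p(U)$: then $V':=U'\perp W$ has the same dimension and determinant as $V$, while the displayed formula forces $S_\frak p(V')=-S_\frak p(V)$. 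Note that this last conclusion is insensitive to whichever convention ($\prod_{i<j}$ or $\prod_{i\le j}$) is used for $S_\frak p$.

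For the binary case, write a binary space of determinant $d$ as $U\cong[a,ad]$, where $a$ is any value represented by $U$. A short Hilbert-symbol computation (using $(a,a)_\frak p=(a,-1)_\frak p$) shows that $S_\frak p([a,ad])$ equals $(a,-d)_\frak p$ up to a factor depending only on $d$. Thus, as $a$ ranges over $F_\frak p^\times/(F_\frak p^\times)^2$, the symbol $S_\frak p([a,ad])$ is constant if and only if $-d$ is a square, i.e.\ if and only if $U$ is a hyperbolic plane. Therefore, whenever $U$ is \emph{not} a hyperbolic plane, $(\,\cdot\,,-d)_\frak p$ is a nontrivial quadratic character of $F_\frak p^\times/(F_\frak p^\times)^2$, hence surjective onto $\{\pm1\}$, and I may pick $a'$ so that $U':=[a',a'd]$ has $\det(U')=d$ and $S_\frak p(U')=-S_\frak p(U)$. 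In particular, when $\dim V=2$ the theorem follows at once by taking $U=V$, which is not a hyperbolic plane by hypothesis.

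When $\dim V\ge 3$ it remains to split off a binary orthogonal summand that is not a hyperbolic plane. I would pick any $a_1\in F_\frak p^\times$ represented by $V$ and write $V\cong[a_1]\perp V_1$ with $\dim V_1=\dim V-1\ge 2$. Over a non-archimedean local field any space of dimension $\ge 2$ represents at least two distinct square classes: it contains a binary subspace, a hyperbolic plane represents all of $F_\frak p^\times$, and an anisotropic binary space represents a full coset of the index-two norm subgroup of $F_\frak p^\times$, giving at least $|F_\frak p^\times/(F_\frak p^\times)^2|/2\ge 2$ classes. Consequently $V_1$ represents some $a_2$ with $a_2\not\equiv -a_1$ modulo squares, so that $U=[a_1,a_2]$ has $\det(U)=a_1a_2\not\equiv -1$ and is not a hyperbolic plane. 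Writing $V\cong U\perp W$ and invoking the binary case completes the argument.

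The one delicate point -- and the step I expect to be the main obstacle -- is this extraction of a good binary summand. A careless choice of orthogonal basis may yield a diagonalization all of whose binary summands are hyperbolic planes; this occurs precisely when $-1$ is a square in $F_\frak p$ and all diagonal entries coincide modulo squares. The argument above sidesteps the issue by not fixing a basis in advance but instead using that a space of dimension $\ge 2$ represents more than one square class, which is exactly what allows the two entries $a_1,a_2$ to be chosen with $a_1a_2\not\equiv -1$. Once this is arranged, the remaining ingredients are only the multiplicativity of the Hasse symbol and the elementary fact that $(\,\cdot\,,-d)_\frak p$ is a nontrivial character exactly when $-d$ is a non-square.
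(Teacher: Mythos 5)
Your proof is correct. There is, however, no in-paper argument to compare it against: the paper states this result as a direct quotation of O'Meara's \cite[63:22 Theorem]{omeara_quadratic_1963} and gives no proof, so your write-up should be judged as a self-contained proof of a cited fact. As such, it is complete and follows the standard route. The reduction via the multiplicativity formula $S_{\mathfrak{p}}(U\perp W)=S_{\mathfrak{p}}(U)\,S_{\mathfrak{p}}(W)\,(\det U,\det W)_{\mathfrak{p}}$ is sound (and, as you note, convention-independent); the binary case is exactly the non-degeneracy of the Hilbert symbol, since a binary space of determinant $d$ representing $a$ is $[a,ad]$ with $S_{\mathfrak{p}}([a,ad])=(a,-d)_{\mathfrak{p}}$ up to a factor depending only on $d$, and $(\,\cdot\,,-d)_{\mathfrak{p}}$ is nontrivial exactly when $-d$ is a nonsquare, i.e.\ exactly when the space is not a hyperbolic plane. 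The genuinely delicate step in dimension $\ge 3$ --- producing a binary orthogonal summand that is \emph{not} hyperbolic, which a careless diagonalization need not supply --- you handle correctly: any regular space of dimension $\ge 2$ over $F_{\mathfrak{p}}$ represents at least two square classes (a hyperbolic plane is universal, while an anisotropic binary space represents a full coset of an index-two norm subgroup, and $|F_{\mathfrak{p}}^\times/(F_{\mathfrak{p}}^\times)^2|\ge 4$), so one can choose $a_2\not\equiv -a_1$ and take $U=[a_1,a_2]$. All ingredients invoked (multiplicativity of the Hasse symbol, non-degeneracy of the Hilbert pairing, norm-index two for quadratic extensions of local fields, splitting off represented values) are standard, so the argument stands; it is in substance the same construction that underlies O'Meara's own proof of 63:22.
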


For representation of quadratic spaces, one has the following result.

\begin{thm}\label{rep} (\cite [63:21 Theorem]{omeara_quadratic_1963}) Let $U$ and $V$ be quadratic spaces over $F_\frak p$ with $\frak p\in \Omega_F\setminus \infty_F$. Write $\nu=\dim(V)-\dim(U) \geq 0$. Then $U \to V$ if and only if $\nu\geq 3$ or
$$ \begin{cases} U\cong V \ \ \ & \text{when $\nu=0$}\,, \\
U\perp [ \det(U) \cdot \det(V)] \cong V \ \ \ & \text{when $\nu=1$}\,, \\
U \perp \Bbb H \cong V  \ \ \ & \text{when $\nu=2$ and $\det(U)=-\det(V)$}.
\end{cases} $$
\end{thm}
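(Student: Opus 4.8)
The plan is to reduce the representation statement to the existence of an orthogonal complement, and then to read off the answer from the classification of local quadratic spaces by dimension, determinant and Hasse symbol, together with the independence result of Theorem \ref{ind}.

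First I would establish the basic reduction: $U\to V$ if and only if there is a quadratic space $W$ over $F_\frak p$ with $\dim W=\nu$ and $V\cong U\perp W$. The backward direction is immediate. For the forward direction, if $\sigma\colon U\to V$ is a representation then $\sigma(U)$ is a non-degenerate subspace of $V$, because $U$ is non-degenerate and $Q\circ\sigma=Q$ on $U$; hence $V$ splits orthogonally as $V=\sigma(U)\perp\sigma(U)^{\perp}$, and taking $W=\sigma(U)^{\perp}$, which is non-degenerate of dimension $\nu$, gives $V\cong U\perp W$. Thus the theorem becomes the question: for which $\nu$-dimensional $W$ can one have $U\perp W\cong V$? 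Next I translate this isometry into invariants. Since the isometry class over $F_\frak p$ is determined by dimension, determinant and Hasse symbol (\cite[63:20 Theorem]{omeara_quadratic_1963}) and the dimensions already match, $U\perp W\cong V$ holds exactly when $\det W=\det U\cdot\det V$ (by multiplicativity of the determinant, using $\det U=\det U^{-1}$ in $F^\times/(F^\times)^2$) and $S_\frak p(W)=S_\frak p(U)\,S_\frak p(V)\,(\det U,\det U\det V)_\frak p$, where the latter comes from the orthogonal-sum formula $S_\frak p(U\perp W)=S_\frak p(U)\,S_\frak p(W)\,(\det U,\det W)_\frak p$. So the problem is now purely about the existence of a space $W$ of dimension $\nu$ with prescribed determinant $\delta:=\det U\det V$ and a prescribed Hasse symbol.

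I would then carry out the case analysis on $\nu$. When $\nu=0$, the space $W=0$ is forced and the invariant conditions reduce to $\det U=\det V$ and $S_\frak p(U)=S_\frak p(V)$, i.e. $U\cong V$. When $\nu=1$, a one-dimensional space is determined by its determinant and has Hasse symbol $1$, so $W\cong[\delta]$ is the only candidate; hence $U\to V$ if and only if $U\perp[\det U\cdot\det V]\cong V$. When $\nu=2$, a binary space of determinant $\delta$ exists with both Hasse symbols precisely when $\delta\ne-1$ in $F^\times/(F^\times)^2$: writing $ab\equiv\delta$ for a diagonalization $[a,b]$, one has $(a,b)_\frak p=(a,-\delta)_\frak p$, which is surjective onto $\{\pm1\}$ as $a$ varies unless $-\delta$ is a square, while for $\delta=-1$ the space is forced to be the isotropic $\Bbb H$ with Hasse symbol $1$; since $\delta=-1$ is equivalent to $\det U=-\det V$, this gives that $U\to V$ holds automatically when $\det U\ne-\det V$, and that for $\det U=-\det V$ one needs exactly $U\perp\Bbb H\cong V$. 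Finally, when $\nu\ge 3$, Theorem \ref{ind} (independence of the invariants in dimension at least $3$) supplies a space $W$ of dimension $\nu$ with any prescribed determinant and Hasse symbol, so the required $W$ always exists and $U\to V$ holds unconditionally.

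The main obstacle is the realizability of quadratic spaces with prescribed dimension, determinant and Hasse symbol, which is exactly what produces the dichotomy in the case $\nu=2$ and the unconditional conclusion for $\nu\ge 3$. I expect the delicate step to be the treatment of $\nu=2$: recognizing that a binary form of determinant $-1$ is forced to be the hyperbolic plane, whereas every other determinant admits both Hasse symbols. The underlying Hilbert-symbol computation $(a,b)_\frak p=(a,-\delta)_\frak p$ and its surjectivity criterion is the point that will require the most care, since it is precisely what distinguishes the generic $\nu=2$ situation from the boundary case $\det U=-\det V$.
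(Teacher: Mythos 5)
Your proposal is correct. Note that the paper itself gives no proof of this statement: it is quoted verbatim from O'Meara (63:21 Theorem), so there is nothing internal to compare against; what you have written is a sound reconstruction of the standard argument, reducing representation to the existence of an orthogonal complement $W$ with $V\cong U\perp W$ (valid since $U$ is non-degenerate, so $\sigma(U)$ splits $V$), translating the isometry into the invariants via $\det(U\perp W)=\det(U)\det(W)$ and $S_\frak p(U\perp W)=S_\frak p(U)S_\frak p(W)(\det U,\det W)_\frak p$, and then settling existence of $W$ with prescribed invariants case by case. The case analysis is right: in particular the key dichotomy for $\nu=2$ (a binary space of determinant $-1$ is isotropic, hence forced to be $\Bbb H$, while any other determinant admits both Hasse symbols by non-degeneracy of the Hilbert symbol) and the use of Theorem \ref{ind} applied to $\langle 1,\dots,1,\delta\rangle$ to realize both Hasse symbols in dimension $\nu\ge 3$. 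One small caveat: your statements that a one-dimensional space has Hasse symbol $1$ and that $S_\frak p([a,b])=(a,b)_\frak p$ hold in the convention $\prod_{i<j}(a_i,a_j)_\frak p$, whereas O'Meara (whom the paper follows) uses $\prod_{i\le j}$, under which $S_\frak p([a,a\delta])=(a,-\delta)_\frak p(\delta,-1)_\frak p$; since the discrepancy is the constant factor $(\delta,-1)_\frak p$, your surjectivity argument and all conclusions are unaffected, but you should fix one convention and state the sum formula accordingly.
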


The main result of this section is the following theorem.

\begin{thm}\label{2.1} Fix  $\frak p\in \Omega_F$ and let $V$ be a quadratic space over $F_\frak p$ .

\medskip

When $\frak p\in \Omega_F\setminus \infty_F$, $V$ is $k$-universal if and only if one of the following conditions holds:

(1) $\dim(V)\geq k+3$;

(2) $k=1$ and $V$ is isotropic of dimension $2$ or $3$;

(3) $k=2$ and $V\cong \Bbb H\perp \Bbb H$.

\medskip

When $\frak p$ is a real prime,  $V$ is $k$-universal  if and only if both the positive index and the negative index of $V$ are $\ge k$.

\medskip

When $\frak p$ is a complex prime,  $V$ is $k$-universal  if and only if $\dim(V)\geq k$.

\end{thm}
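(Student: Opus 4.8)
The plan is to separate the three types of prime; the archimedean cases are quick. Over a complex prime every quadratic space is determined by its dimension, so $V$ represents a given $k$-dimensional space iff $\dim(V)\ge k$, which is exactly the stated criterion. Over a real prime I would invoke Sylvester's law of inertia: if $V$ has signature $(p,q)$, a $k$-dimensional space of signature $(a,k-a)$ embeds in $V$ iff $p\ge a$ and $q\ge k-a$; letting $a$ range over $0,\dots,k$, the two definite forms of rank $k$ force $p\ge k$ and $q\ge k$, and conversely these two inequalities accommodate every intermediate signature. Hence the archimedean criteria.

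For a non-archimedean $\frak p$ I would reduce, using regularity of $U$, the representation $U\to V$ to an orthogonal splitting $V\cong U\perp W$ with $\dim(W)=\nu:=\dim(V)-\dim(U)$ (equivalently, quote Theorem \ref{rep} directly). The decisive auxiliary input, read off from Theorem \ref{ind}, is a description of the admissible complements $W$: for prescribed dimension $m$ and determinant, both values of the Hasse symbol are realizable once $m\ge 3$, or once $m=2$ with determinant $\neq -1$; the only rigid cases are $m=2$ with determinant $-1$ (forcing $W\cong\Bbb H$) and $m=1$ (forcing Hasse symbol $+1$). Combined with the product formula $S_\frak p(U\perp W)=S_\frak p(U)\,S_\frak p(W)\,(\det U,\det W)_\frak p$, this converts every question about $k$-universality into bookkeeping on determinants and Hasse symbols.

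Sufficiency of (1) is then immediate from $\nu\ge 3$. For (2) and (3) I would verify the boundary by hand: if $V$ is isotropic of dimension $2$ (so $V\cong\Bbb H$) or $3$, each $\langle a\rangle$ splits off, the only constrained case being when the complement has determinant $-1$, where the forced complement $\Bbb H$ does occur because an isotropic binary or ternary space is pinned down by its determinant; and if $V\cong\Bbb H\perp\Bbb H$, every binary $U$ with $\det U\neq -1$ splits off freely, while the sole binary form with determinant $-1$, namely $\Bbb H$, splits off because $\Bbb H\perp\Bbb H\cong V$. For necessity I would, for every remaining pair $(\dim V,k)$, exhibit an unrepresented $U$: whenever the complement would have dimension $\ge 3$, or dimension $2$ with a determinant I can keep away from $-1$, Theorem \ref{ind} supplies a $U$ whose Hasse symbol is wrong, so that no complement with the correct invariants exists. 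The residual rigid configurations are exactly $\nu=1$ and $\nu=2$ with $\det U=-\det V$; there, for $k=1$ and $\dim V=3$ anisotropic the value $-\det V$ is not represented, and for $k=2$, $\dim V=4$ with $V\not\cong\Bbb H\perp\Bbb H$ I split into $\det V\neq 1$, handled by a binary $U$ of determinant $-\det V\neq -1$ given the wrong Hasse symbol, and $\det V=1$ with $V$ anisotropic, handled by $U=\Bbb H$ since $\Bbb H\perp\Bbb H\not\cong V$.

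I expect the crux to be the configuration $k=2$, $\dim V=4$, the unique place where a space of dimension $k+2$ is $k$-universal: one must show both that $\Bbb H\perp\Bbb H$ represents every binary space and that no other quaternary space does, and both halves turn on correctly reading off when the forced complement $\Bbb H$ is isometric to the orthogonal complement of $U$ in $V$. More broadly, the step most prone to error is the exact delineation, via Theorem \ref{ind}, of the rigid complements, since it is precisely the rigidity of $\Bbb H$ and of one-dimensional spaces that produces the three exceptional families recorded in (2) and (3).
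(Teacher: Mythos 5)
Your proposal is in substance the paper's own proof: both sides of the non-archimedean case are run through Theorem \ref{rep} and Theorem \ref{ind}, and your ``forced complement with prescribed dimension, determinant and Hasse symbol'' bookkeeping is exactly the paper's argument with Witt cancellation written multiplicatively; the real and complex cases are handled identically. Your treatment of the crux ($k=2$, $\dim V=4$, splitting on whether $\det V=1$) is a correct variant of the paper's ``write $V\cong\Bbb H\perp U$ and flip $U$'' argument.

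There is, however, one family of necessity cases that your stated mechanism cannot reach and that you never address: $k=1$ with $\dim V\le 2$. Your engine for producing unrepresented spaces is ``Theorem \ref{ind} supplies a $U$ with the wrong Hasse symbol,'' but Theorem \ref{ind} explicitly excludes one-dimensional spaces, so for $k=1$ there is no Hasse-flipped partner of $U=[a]$; your write-up only treats the ternary anisotropic $V$ (via the rigid complement $\Bbb H$), and the claim that an \emph{anisotropic binary} $V$ is not universal is skipped. That claim is genuinely part of the theorem -- it is why condition (2) demands isotropy in dimension $2$ -- and it needs a different (if standard) argument: vary the determinant of $U$ rather than its Hasse symbol, noting that $[a]\to V$ forces $V\cong [a]\perp[a\det V]$, whose Hasse symbol is $(\det V,-1)_\frak p\,(a,-\det V)_\frak p$ and hence takes both values as $a$ varies, since $-\det V\notin (F_\frak p^\times)^2$ when $V$ is anisotropic; alternatively cite \cite[63:16.Example]{omeara_quadratic_1963}, which is precisely how the paper dismisses $\dim V\le 2$. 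Two smaller points: the $\nu=0$ cases ($\dim V=k$ is never $k$-universal because there is more than one isometry class in dimension $k$) are also left implicit, and the sentence ``whenever the complement would have dimension $\ge 3$, or dimension $2$ with a determinant I can keep away from $-1$'' conflates $U$ with its complement -- in all necessity cases the complement has dimension $\le 2$, and it is $U$ whose dimension and determinant must permit the Theorem \ref{ind} flip (likewise, a one-dimensional space $[a]$ has Hasse symbol $(a,a)_\frak p=(a,-1)_\frak p$, determined by $a$ but not always $+1$).
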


\begin{proof} First consider the case $\frak p\in\Omega_F\setminus\infty_F$. The sufficiency follows from Theorem \ref{rep} and the universal property of isotropic spaces.

\underline{Necessity.} For $k=1$, one only needs to prove that $V$ is not universal if $\dim (V) =3$ and $V$ is anisotropic by \cite[63:16.Example]{omeara_quadratic_1963}. Indeed, we claim that $V$ does not represent $-\det(V)$ in this case. Suppose not. Then the quadratic space $V\perp [ \det(V) ]$ is isotropic with discriminant 1. By \cite[63:18.Remark]{omeara_quadratic_1963}, one concludes that
$$V\perp [ \det(V) ] \cong \Bbb H \perp \Bbb H\,. $$ By computing the Hasse symbols of both sides with \cite[58:3.Remark]{omeara_quadratic_1963},  one obtains that $S_\frak p (V) =(-1, -1)_\frak p$. Therefore $V$ is isotropic by \cite[58:6]{omeara_quadratic_1963} and a contradiction is derived.

\

For $k=2$, one has $\dim(V)\geq 2$. Since there are more than two isometric classes of quadratic spaces over $F_\frak p$, one obtains that $\dim(V)\geq 3$ by Theorem \ref{rep}. We further claim that $\dim(V)\geq 4$. Suppose $\dim(V)=3$. Let $W$ be a binary quadratic space over $F_\frak p$ satisfying $W\not \cong \Bbb H$. Since $W\to V$, one has
$$V\cong W\perp [\det(V)\cdot \det(W)]$$ by Theorem \ref{rep}. By Theorem \ref{ind}, there is a binary quadratic space $W'$ over $F_\frak p$ such that $\det(W')=\det(W)$ and $S_\frak p(W')=-S_\frak p(W)$. Since $W' \to V$ as well, one has
$$V\cong W'\perp [\det(V)\cdot \det(W')]$$ by Theorem \ref{rep}.
By Witt cancellation (\cite[42:16.Theorem] {omeara_quadratic_1963}), one obtains that $W\cong W'$. This is a contradiction and the claim follows. Now one only needs to consider $\dim(V)=4$. Since $\Bbb H\to V$, one can further assume that $V=\Bbb H \perp U$ where $U$ is a binary quadratic space. Suppose that $U$ is not a hyperbolic plane. Then there is a binary quadratic space $U'$ over $F_\frak p$ such that $\det(U')=\det(U)$ and $S_\frak p(U')=-S_\frak p(U)$ by Theorem \ref{ind}. Therefore
$$\det(U')\cdot \det(V)=\det(U)\cdot \det(V)=-1.$$ Since $U'\to V$, one obtains that $V\cong U'\perp \Bbb H$ by Theorem \ref{rep}. By Witt cancellation, one concludes that $U\cong U'$ and a contradiction is derived. This implies that $U$ is a hyperbolic plane as desired.

\

For $k\geq 3$, one has $\dim(V)\geq k$. Since there are more than two isometric classes of quadratic spaces over $F_\frak p$, one obtains that $\dim(V)\geq k+1$ by Theorem \ref{rep}.

Suppose $\dim(V)=k+1$. By Theorem \ref{ind}, there are two quadratic spaces $U$ and $U'$ over $F_\frak p$ such that $$ \dim(U')=\dim(U)=k, \ \ \ \det(U')=\det(U) \ \ \ \text{ and } \ \ \ S_\frak p(U')=-S_\frak p(U) . $$
Since both $U$ and $U'$ are represented by $V$, one has
$$ V \cong U\perp [\det(V)\cdot \det(U)] \cong U'\perp [\det(V) \cdot \det(U')] $$ by Theorem \ref{rep}.
By Witt cancellation, one obtains $U\cong U'$. A contradiction is derived.

Suppose $\dim(V)=k+2$.  Since $\Bbb H \to V$, there is a quadratic space $U$ of dimension $k$ such that $V\cong \Bbb H\perp U$.
 By Theorem \ref{ind}, there is a quadratic space $U'$ over $F_\frak p$ such that $$\dim(U')=\dim(U), \ \ \ \det(U')=\det(U) \ \ \ \text{ and } \ \ \ S_\frak p(U')=-S_\frak p(U) . $$
 This implies that
$$\det(U')\cdot \det(V)=\det(U)\cdot \det(V)=-1.$$ Since $U' \to V$, one obtains that $V\cong U'\perp \Bbb H$ by Theorem \ref{rep}. By Witt cancellation, one concludes that $U\cong U'$ and a contradiction is derived.

Therefore one concludes that $\dim(V)\geq k+3$ as desired.

 \

When $\frak p$ is a real prime, the result follows from \cite[61:1.Theorem]{omeara_quadratic_1963} (for the necessity, consider the representation of the quadratic spaces $I_k$ and $-I_k$).

\

When $\frak p$ is a complex prime, the result follows from the fact that (non-degenerate) quadratic spaces of the same dimension over the complex numbers are all isomorphic.
\end{proof}

 An immediate consequence of Theorem \ref{2.1} is the following result.

\begin{cor} \label{2.3}  If $k\geq 3$, then there are no (classic) $k$-LNG lattices over any number field $F$. In other words, every (classic) integral $\mathcal O_F$-lattice $L$ such that $L_\frak p=L\otimes_{\cO_F} \cO_\frak p$ is (classic) $k$-universal over $\cO_\frak p$ for all $\frak p\in \Omega_F$ is (classic) indefinite $k$-universal over $\cO_F$.
\end{cor}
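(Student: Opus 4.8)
The plan is to combine the dimension count of Theorem \ref{2.1} with strong approximation for the spin group, so that the global representation of an arbitrary rank-$k$ lattice follows from its solvability at every completion. First I would extract the size and signature of $V:=FL$ from the local universality of $L$. Fix a finite prime $\frak p$. Since $L_\frak p$ represents every integral $\cO_\frak p$-lattice of rank $k$, and since any $k$-dimensional space over $F_\frak p$ contains an integral lattice after rescaling the lattice by a power of a uniformizer, the space $V_\frak p$ is $k$-universal. As $k\ge 3$, Theorem \ref{2.1} then gives $\dim V=\dim V_\frak p\ge k+3\ge 6$ (for any of the infinitely many finite primes). At a real prime the same theorem forces both indices of $V_\frak p$ to be $\ge k\ge 1$, so $V_\frak p$ is isotropic there; at a complex prime $V_\frak p$ is automatically isotropic since $\dim V\ge 2$. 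Thus the spin group of $V$ is non-compact at every archimedean place, which furnishes the non-compact place — lying away from the finite primes where integrality is imposed — required by strong approximation.

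Next I would assemble the local data. Let $M$ be any integral $\cO_F$-lattice of rank $k$. By the definition of $k$-universality, $M_\frak p\to L_\frak p$ for all $\frak p\in\Omega_F$, and by the Hasse--Minkowski theorem the induced local representations of spaces globalize to an isometric embedding $FM\hookrightarrow V$ whose orthogonal complement has dimension $\dim V-k\ge 3$. Extending each local representation $M_\frak p\to L_\frak p$ by Witt's theorem produces $\eta_\frak p\in O(V_\frak p)$ with $\eta_\frak p(M_\frak p)\subseteq L_\frak p$, equal to the identity for almost all $\frak p$; thus $M$ is represented by $\gen(L)$.

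Finally I would globalize. Because the codimension satisfies $\dim V-k\ge 3$ and $V$ is indefinite, I would invoke the local-global principle for integral representations of indefinite lattices via strong approximation for the spin group of $V$ (the higher-rank analogue of the $k=1$ case treated in \cite{xu_indefinite_2020}): representation of $M$ by $\gen(L)$ upgrades to $M\to L$. Since $M$ is arbitrary, $L$ is indefinite $k$-universal, so no $k$-LNG lattice can exist.

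The hard part is this last step, since Theorem \ref{2.1} concerns only spaces and the passage to lattices is precisely a local-global principle for representations, which is false in general (indeed it fails for $k\le 2$, as the rest of the paper shows). It holds here for two reasons I would make explicit: the codimension $\ge 3$ is the classical threshold at which the spinor-genus obstruction to gluing the adelic family $(\eta_\frak p)\in O_\Bbb A(V)$ into a global isometry in $O^+_\Bbb A(V)$ vanishes — there is enough room in the orthogonal complement to correct the local spinor norms $\theta_\frak p$ — and the indefiniteness of $V$ at an archimedean place supplies the non-compact place demanded by strong approximation. The classic case is identical: local classic $k$-universality again forces $\dim V\ge k+3$ through the classic counterpart of Theorem \ref{2.1}, and the strong-approximation argument is insensitive to the scale condition, so it carries over verbatim.
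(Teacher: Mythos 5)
Your proposal is correct and follows essentially the same route as the paper: extract $k$-universality of the spaces $F_\frak p L_\frak p$ from that of the lattices, apply Theorem \ref{2.1} to get $\rank(L)\ge k+3$ and indefiniteness, and then conclude by the codimension-$\ge 3$ local-global principle for representations by indefinite lattices. The only difference is presentational: where you sketch the underlying strong-approximation/spinor-norm mechanism, the paper simply cites this principle as known from \cite{hsia_indefinite_1998}.
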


\begin{proof}  Since $L_\frak p$ is $k$-universal over $\cO_\frak p$ for all $\frak p\in \Omega_F$, one obtains that $F_\frak pL_\frak p$ is a $k$-universal quadratic space over $F_\frak p$. In particular, the quadratic space $FL$ is indefinite. Since $k\geq 3$, one concludes that $$\mathrm{rank}(L)=\dim(FL)=\dim(F_\frak pL_\frak p)\geq k+3$$ by Theorem \ref{2.1}. By \cite[p.135, line 2-4]{hsia_indefinite_1998}, $L$ represents all $\cO_F$-lattices of rank $k$ whose localization are represented by $L_\frak p$ over $\cO_{\frak p}$ for all $\frak p\in \Omega_F$. Since $L_\frak p$ is $k$-universal over $\cO_\frak p$ for all $\frak p\in \Omega_F$, one concludes that $L$ is indefinite $k$-universal over $\cO_F$. The argument is also valid for classic $\mathcal O_F$-lattices.
\end{proof}

\section{On $k$-universal lattices over non-dyadic local fields}\label{sec3}

In this section, we fix a non-dyadic prime $\frak p\in\Omega_F$, so that $F_\frak p$ is a non-dyadic local field.
Our goal is to  determine all $k$-universal lattices over  $\mathcal{O}_\frak p$. (Notice that the notions of integral lattices and classic lattices coincide over $\cO_{\frak p}$, since $2\in\cO_{\frak p}^\times$.)

\medskip

Recall that $\pi_{\frak p}$ denotes a uniformizer of $F_{\frak p}$ and  $\Delta_\frak p\in \mathcal{O}_{\frak p}^\times$ is chosen such that $F_\frak p(\sqrt{\Delta_\frak p})/F_\frak p$ is a quadratic unramified extension.

\begin{lem}\label{3.1}  The lattice $\langle \pi_\frak p, -\Delta_\frak p \pi_\frak p \rangle$ is the $ \mathcal{O}_{\frak p} $-maximal lattice on the quadratic space $[\pi_\frak p, -\Delta_\frak p \pi_\frak p]$.
\end{lem}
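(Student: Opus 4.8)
The plan is to identify the candidate lattice $M:=\langle \pi_\frak p,-\Delta_\frak p\pi_\frak p\rangle$ with the set $L_0:=\{x\in V: Q(x)\in\cO_\frak p\}$, where $V=[\pi_\frak p,-\Delta_\frak p\pi_\frak p]$, and fix an orthogonal basis $e_1,e_2$ of $V$ with $Q(e_1)=\pi_\frak p$ and $Q(e_2)=-\Delta_\frak p\pi_\frak p$. First I would record two routine facts. Since $\mathfrak n(L')=Q(L')\cO_\frak p$, a lattice $L'$ on $V$ is integral precisely when $Q(x)\in\cO_\frak p$ for all $x\in L'$, i.e. precisely when $L'\subseteq L_0$. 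Second, $V$ is anisotropic: for $x=\alpha e_1+\beta e_2$ one has $Q(x)=\pi_\frak p(\alpha^2-\Delta_\frak p\beta^2)$, which vanishes only when $\alpha=\beta=0$ because $\Delta_\frak p$ is a non-square. Granting these, once I prove $L_0=M$ it follows at once that $M$ is integral and that every integral lattice on $V$ is contained in $M$, which is exactly the statement that $M$ is the (unique) $\cO_\frak p$-maximal lattice on $V$.

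The inclusion $M\subseteq L_0$ is immediate: for $\alpha,\beta\in\cO_\frak p$ we get $Q(\alpha e_1+\beta e_2)=\pi_\frak p(\alpha^2-\Delta_\frak p\beta^2)\in\frak p\subseteq\cO_\frak p$ (in fact $\mathfrak n(M)=\frak p$). For the reverse inclusion $L_0\subseteq M$ I take $x=\alpha e_1+\beta e_2$ with $Q(x)\in\cO_\frak p$ and must force $\alpha,\beta\in\cO_\frak p$. Setting $m=\min\{\ord_\frak p\alpha,\ord_\frak p\beta\}$, I would suppose $m<0$ and write $\alpha=\pi_\frak p^{m}\alpha'$, $\beta=\pi_\frak p^{m}\beta'$ with $\alpha',\beta'\in\cO_\frak p$ not both in $\frak p$, so that $\ord_\frak p Q(x)=1+2m+\ord_\frak p(\alpha'^2-\Delta_\frak p\beta'^2)$.

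The crux, and the only genuinely substantive step, is to show that $\alpha'^2-\Delta_\frak p\beta'^2$ is a unit whenever $(\alpha',\beta')$ is primitive; this is where the hypotheses on $\frak p$ and $\Delta_\frak p$ are used. Reducing modulo $\frak p$ in the finite residue field $\kappa=\cO_\frak p/\frak p$ (of odd order, as $\frak p$ is non-dyadic), a non-unit value would give $\bar\alpha'^2=\bar\Delta_\frak p\,\bar\beta'^2$. Because $\Delta_\frak p$ is a unit with $F_\frak p(\sqrt{\Delta_\frak p})/F_\frak p$ unramified, $\Delta_\frak p$ is a non-square in $\cO_\frak p^\times$, and by Hensel's lemma (valid since $2\in\cO_\frak p^\times$) its reduction $\bar\Delta_\frak p$ is a non-square in $\kappa$. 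Hence $\bar\beta'\neq0$ would exhibit $\bar\Delta_\frak p$ as a square, a contradiction, while $\bar\beta'=0$ forces $\bar\alpha'=0$, contradicting primitivity. Therefore $\ord_\frak p(\alpha'^2-\Delta_\frak p\beta'^2)=0$ and $\ord_\frak p Q(x)=1+2m\le -1<0$, contradicting $Q(x)\in\cO_\frak p$. Thus $m\ge0$, i.e. $x\in M$, which yields $L_0=M$ and completes the proof. Everything apart from this residue-field computation is the standard dictionary between integrality, norms, and the anisotropy of $V$, so I expect no further obstacle.
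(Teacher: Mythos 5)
Your proof is correct, and its computational core coincides with the paper's: a valuation count showing that a vector whose coordinates have a pole cannot have $Q$-value in $\cO_\frak p$, with the key input being that $\Delta_\frak p$ remains a non-square in the residue field — your Hensel step is exactly the Local Square Theorem (\cite[63:1]{omeara_quadratic_1963}) that the paper cites. The packaging differs in two ways, though. First, the paper invokes \cite[91:1.Theorem]{omeara_quadratic_1963}, which says that on an anisotropic space the unique $\cO_\frak p$-maximal lattice is precisely $\{x\in V:\ Q(x)\in\cO_\frak p\}$; it therefore only has to check that this set is contained in $\langle \pi_\frak p,-\Delta_\frak p\pi_\frak p\rangle$. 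You instead prove the set equality $L_0=M$ outright and then deduce maximality and uniqueness from first principles (every integral lattice lies in $L_0=M$, and $M$ is itself integral), so you never need 91:1; as a byproduct your argument re-proves that $L_0$ is a lattice, which is also why the anisotropy you record at the start is, strictly speaking, never used in your route — it is only needed to license the appeal to 91:1. Second, where the paper splits into the cases $\ord_\frak p(\alpha)\neq\ord_\frak p(\beta)$ and $\ord_\frak p(\alpha)=\ord_\frak p(\beta)<0$ (handling the latter by the Local Square Theorem), you factor out $\pi_\frak p^{m}$ and treat a primitive coordinate vector uniformly in the residue field; your dichotomy $\bar\beta'=0$ versus $\bar\beta'\neq 0$ subsumes both of the paper's cases. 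What each approach buys: yours is more self-contained and handles a single case with a single external input (Hensel), while the paper's is shorter on the page because 91:1 hands it both the characterization of the maximal lattice and its uniqueness for free.
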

\begin{proof} Let $\{x, y\}$ be an orthogonal basis of  $[\pi_\frak p, -\Delta_\frak p \pi_\frak p]$ with $Q(x)=\pi_\frak p$ and $Q(y)=-\Delta_\frak p \pi_\frak p$. Since the quadratic space $[\pi_\frak p, -\Delta_\frak p \pi_\frak p]$ is anisotropic, an element $\alpha x+ \beta y$ with $\alpha, \beta \in F_\frak p$ belongs to the unique $\mathcal{O}_{\frak p} $-maximal lattice if and only if
$$Q(\alpha x+ \beta y)= \alpha^2 \pi_\frak p - \beta^2 \Delta_\frak p \pi_\frak p \in  \mathcal{O}_{\frak p} $$
by \cite[91:1. Theorem]{omeara_quadratic_1963}. If $\ord_\frak p(\alpha)\neq \ord_\frak p(\beta)$, then
$$ \ord_\frak p(Q(\alpha x+ \beta y))= \min \{ \ord_\frak p(\alpha^2 \pi_\frak p), \ord_\frak p(\beta^2 \Delta_\frak p \pi_\frak p) \} \geq 0 .$$ This implies that $\alpha, \beta \in \mathcal{O}_{\frak p} $. Suppose  $\ord_\frak p(\alpha)= \ord_\frak p(\beta) <0$. There are $\xi, \eta\in  \mathcal{O}_{\frak p}^\times $ such that $\xi^2-\Delta_\frak p \eta^2 \in \pi_{\frak p}\mathcal{O}_{\frak p}$ by removing the denominators. Then $\Delta_\frak p$ is a square by \cite[63:1.Local Square Theorem]{omeara_quadratic_1963}. A contradiction is derived.
\end{proof}

Let us call a set $S$ of integral lattices of rank $k$ a \emph{testing set} for $k$-universality if for every integral lattice representing all members in $S$ is $k$-universal. It is not difficult to provide a finite testing set for $k$-universality over  $\mathcal{O}_{\frak p}$.  The crucial part of a local analogue of the Conway-Schneeberger theorem is to find a \emph{minimal} testing set, i.e., a testing set such that none of its proper subsets is enough for testing the $k$-universality.

\begin{prop}\label{3.2}
 A minimal testing set for $k$-universality over $\mathcal{O}_{\frak p}$ is given as follows:

 When $k=1$, the set consists of the following $4$ lattices
 $$\langle 1 \rangle;  \  \langle \Delta_\frak p \rangle;  \ \langle \pi_\frak p \rangle;  \  \langle \Delta_\frak p \pi_\frak p \rangle . $$

 When $k=2$, the set consists of the following $7$ lattices
$$  \langle 1,-1\rangle ; \  \langle 1,-\Delta_\frak p \rangle ;  \  \langle \pi_\frak p, -\Delta_\frak p \pi_\frak p \rangle ;  \  \langle 1,-\pi_\frak p  \rangle ;  \  \langle \Delta_\frak p,-\Delta_\frak p\pi_\frak p \rangle ; \ \langle 1,-\Delta_\frak p\pi_\frak p \rangle ; \  \langle\Delta_\frak p,-\pi_\frak p\rangle . $$

When $k\geq 3$, the set consists of the following $8$ lattices
$$ \langle 1, \cdots, 1 \rangle;  \  \langle \cdots,  -\Delta_\frak p, \pi_\frak p , -\Delta_\frak p \pi_\frak p \rangle;  \  \langle 1, \cdots, 1, \Delta_\frak p \rangle;  \ \langle \cdots, -1, \pi_\frak p, -\Delta_\frak p \pi_\frak p \rangle ; $$
$$ \langle 1, \cdots, -1, -\pi_\frak p \rangle;  \ \langle 1, \cdots, -\Delta_\frak p, -\Delta_\frak p \pi_\frak p  \rangle ;   \ \langle 1, \cdots, -1, -\Delta_\frak p \pi_\frak p \rangle ;  \ \langle 1,  \cdots, -\Delta_\frak p, -\pi_\frak p \rangle $$
where the elements not showing up in the notations are $1$.
\end{prop}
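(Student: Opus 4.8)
The plan is to cut the infinite supply of integral rank-$k$ lattices down to a finite, explicit family, to show that representing the listed lattices is equivalent to $k$-universality on that family, and to prove minimality by exhibiting one near-miss lattice for each listed member. Two reductions are available over the non-dyadic ring $\cO_\frak p$ (where $2\in\cO_\frak p^\times$). First, every integral $\cO_\frak p$-lattice is diagonalizable (\cite{omeara_quadratic_1963}), so any integral rank-$k$ lattice is isometric to $\langle\pi_\frak p^{a_1}u_1,\dots,\pi_\frak p^{a_k}u_k\rangle$ with $a_i\ge 0$ and $u_i\in\{1,\Delta_\frak p\}$. Second, since $\pi_\frak p^2N\subseteq N$ one has $\langle\pi_\frak p^2c\rangle\to\langle c\rangle$, and forming orthogonal sums shows that such a lattice is represented by its \emph{scale-reduced} companion $\langle\pi_\frak p^{\,a_1\bmod 2}u_1,\dots,\pi_\frak p^{\,a_k\bmod 2}u_k\rangle$. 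Thus $M$ is $k$-universal if and only if it represents every scale-reduced lattice, i.e.\ every $U\perp W$ with $U$ unimodular and $W$ modular of scale $\pi_\frak p\cO_\frak p$; up to isometry such a lattice is determined by the two ranks and the two determinant classes. The listed lattices are precisely the scale-reduced ones carrying the largest unimodular part for each admissible modular configuration, and the chosen signs are designed so that, uniformly in the square class of $-1$, the four mixed members realize all four pairs of unit classes.

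For $k=1,2$ I would prove sufficiency by domination: every scale-reduced rank-$k$ lattice is represented by one of the listed lattices, so by transitivity of $\to$ any $M$ representing the list is $k$-universal. For $k=1$ the four listed lattices are exactly the four scale-reduced classes, so nothing more is needed; for $k=2$ the only scale-reduced class absent from the list is $\langle\pi_\frak p,\pi_\frak p\rangle$, and a short case split on the square class of $-1$ disposes of it: if $-\Delta_\frak p$ is a square then $\langle\pi_\frak p,\pi_\frak p\rangle\cong\langle\pi_\frak p,-\Delta_\frak p\pi_\frak p\rangle$, while otherwise $[\pi_\frak p,\pi_\frak p]\cong\Bbb H$, so $\langle\pi_\frak p,\pi_\frak p\rangle\cong\pi_\frak p\langle 1,-1\rangle\subseteq\langle 1,-1\rangle$ is represented by $\langle 1,-1\rangle$. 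Lemma \ref{3.1} identifies $\langle\pi_\frak p,-\Delta_\frak p\pi_\frak p\rangle$ as the maximal lattice on the anisotropic modular plane, and the remaining dominations are routine consequences of Theorem \ref{rep} and the fact that a unimodular (resp.\ modular) lattice is determined by its rank and determinant.

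The case $k\ge 3$ is not reachable by domination alone: for instance $\langle\pi_\frak p,\pi_\frak p,\pi_\frak p\rangle$ has determinant of odd valuation and hence is represented by no rank-$3$ lattice with a nonzero unimodular part, yet a $3$-universal $M$ must represent it. Here the plan is first to force the space $F_\frak pM$ to be $k$-universal: the first two listed lattices have equal determinant but opposite Hasse symbol (a Hilbert-symbol computation, valid whether or not $-1$ is a square), so by the Witt-cancellation argument used in the necessity part of Theorem \ref{2.1} one gets $\dim(F_\frak pM)\ge k+3$. One then reads off from the eight representations that the Jordan invariants of $M$ satisfy the conditions characterizing $k$-universality over $\cO_\frak p$ — the two unimodular members of rank $k$ pin down the unimodular part, while the two modular-rank-$2$ members together with the four mixed members pin down the $\pi_\frak p$-modular part and its interaction with the unimodular part — and concludes via the representation theory of lattices over non-dyadic local fields. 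Minimality is then uniform in $k$: for each listed $N$ I would exhibit a lattice representing every other listed lattice but not $N$, namely one violating exactly the Jordan condition that $N$ tests; for example $\langle\Delta_\frak p,\pi_\frak p,\Delta_\frak p\pi_\frak p\rangle$ represents $\Delta_\frak p,\pi_\frak p,\Delta_\frak p\pi_\frak p$ but, by the Local Square Theorem (\cite[63:1]{omeara_quadratic_1963}), not $1$, so $\langle 1\rangle$ cannot be dropped when $k=1$.

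I expect the main obstacle to be the sufficiency for $k\ge 3$: controlling the interaction between the unimodular and $\pi_\frak p$-modular Jordan components of the large lattice $M$ — in particular representing modular lattices of high rank such as $\langle\pi_\frak p,\dots,\pi_\frak p\rangle$ — and verifying that the eight representations really force the full Jordan-theoretic criterion rather than merely the weaker space-level condition of Theorem \ref{2.1}. A secondary, purely bookkeeping, difficulty is arranging the sign conventions in the list so that every step is uniform in the square class of $-1$, together with the Hilbert-symbol computations underlying the determinant and Hasse comparisons.
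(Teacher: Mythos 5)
Your treatment of $k=1,2$ is essentially sound: diagonalization plus the scale-reduction $\langle\pi_\frak p^2c\rangle\to\langle c\rangle$ gives a valid dominating family, and your case split on the square class of $-1$ disposes of the one modular class missing from the list (this differs from the paper, which dominates instead by $\cO_\frak p$-maximal lattices). But your plan for $k\ge 3$ rests on a false premise, and that is where the genuine gap lies. You claim that $\langle\pi_\frak p,\pi_\frak p,\pi_\frak p\rangle$, having determinant of odd valuation, ``is represented by no rank-$3$ lattice with a nonzero unimodular part.'' This is wrong: for a full-rank sublattice $L\subseteq N$ one has $\det L=[N:L]^2\det N$, so an odd-valuation determinant is no obstruction to embedding into another odd-valuation-determinant lattice. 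Concretely, $-1$ is a sum of two squares in every non-dyadic local field (count squares in the residue field and lift by Hensel), so $[\pi_\frak p,\pi_\frak p,\pi_\frak p]\cong\Bbb H\perp[-\pi_\frak p]$, and the index-$\frak p$ sublattice $\pi_\frak p A(0,0)\perp\langle\pi_\frak p\rangle$ of $A(0,0)\perp\langle-\pi_\frak p\rangle\cong\langle 1,-1,-\pi_\frak p\rangle$ realizes an embedding of $\langle\pi_\frak p,\pi_\frak p,\pi_\frak p\rangle$ into a listed lattice whose unimodular Jordan component has rank $2$. So domination does \emph{not} break down at $k=3$.

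Because of this false premise you abandon domination and instead propose to extract, from the eight representations, the full Jordan-theoretic characterization of $k$-universality and then ``conclude via the representation theory of lattices over non-dyadic local fields.'' That characterization is precisely Proposition \ref{3.4}, which the paper deduces \emph{from} Proposition \ref{3.2}; proving it directly from \cite{omeara_integral_1958} is possible but is exactly the hard work your sketch leaves undone, and you yourself flag it as the main obstacle. The missing idea is that the paper's one-line reduction is uniform in $k$: the listed lattices are exactly the $\cO_\frak p$-maximal lattices on the $4$, $7$, resp.\ $8$ isometry classes of $k$-dimensional quadratic spaces (the only non-obvious cases being the two lattices with $\pi_\frak p$-modular part of rank $2$, handled by splitting off the unimodular part via \cite[82:15a]{omeara_quadratic_1963} and invoking Lemma \ref{3.1}); since every integral rank-$k$ lattice lies in a maximal lattice on its space \cite[82:18]{omeara_quadratic_1963}, which is unique up to isometry \cite[91:2]{omeara_quadratic_1963}, transitivity of representation finishes the testing-set claim for all $k$ at once. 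Your minimality argument, by contrast, is the right idea and matches the paper's (exhibit, for each listed $N$, a lattice representing all the others but not $N$, using Witt cancellation at the space level).
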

\begin{proof}
Let us first show that the set of lattices given in the proposition are indeed testing sets for $k$-universality in the respective cases.

Every integral $\mathcal{O}_{\frak p}$-lattice of rank $k$ is contained in an $\mathcal{O}_{\frak p}$-maximal lattice of rank $k$ by \cite[82:18]{omeara_quadratic_1963}, and all $\mathcal{O}_{\frak p}$-maximal lattices in a fixed quadratic space are isometric by \cite[91:2. Theorem]{omeara_quadratic_1963}. So it is sufficient to prove that the lattices listed in the proposition  are the $\mathcal{O}_{\frak p}$-maximal lattices from all possible $k$-dimensional quadratic spaces.

There are only $4$ one-dimensional quadratic spaces over $F_\frak p$ up to isometry. The lattices listed for $k=1$ are the corresponding $\mathcal{O}_{\frak p}$-maximal lattices. The result can also follow from \cite[Lemma 2.2]{xu_indefinite_2020}.

There are precisely  $7$ binary quadratic spaces over $F_\frak p$ up to isometry by \cite[63:9]{omeara_quadratic_1963} and Theorem \ref{ind}. One concludes that the lattices listed for $k=2$ come from $7$ different quadratic spaces by computing their discriminants and Hasse symbols. The listed lattices are also $\mathcal{O}_{\frak p}$-maximal by \cite[82:19]{omeara_quadratic_1963} and Lemma \ref{3.1}.

For $k\geq 3$, there are exactly $8$ quadratic spaces of dimension $k$ over $F_\frak p$ up to isometry by \cite[63:9]{omeara_quadratic_1963} and Theorem \ref{ind}. Since the listed lattices of rank $k$ are from different quadratic spaces by computing discriminants and Hasse symbols, one only needs to show that  $\langle \cdots,  -\Delta_\frak p, \pi_\frak p , -\Delta_\frak p \pi_\frak p \rangle$ and  $\langle \cdots, -1, \pi_\frak p, -\Delta_\frak p \pi_\frak p \rangle $ are $\mathcal{O}_{\frak p}$-maximal lattices by \cite[82:19]{omeara_quadratic_1963}.
Suppose that $L$ is an integral $\mathcal{O}_{\frak p}$-lattice such that
$$ L \supseteq  \langle \cdots,  -\Delta_\frak p, \pi_\frak p , -\Delta_\frak p \pi_\frak p \rangle \ \ \  \ \text{or} \ \ \  \ L \supseteq \langle \cdots, -1, \pi_\frak p, -\Delta_\frak p \pi_\frak p \rangle . $$ Since both $\langle \cdots,  -\Delta_\frak p \rangle$ and $\langle \cdots, -1\rangle$ are unimodular, there is a binary lattice $L_1$ on $[\pi_\frak p, -\Delta_\frak p \pi_\frak p]$ with $\frak n(L_1) \subseteq \mathcal{O}_{\frak p}$ such that
$$ L= \langle \cdots,  -\Delta_\frak p \rangle \perp L_1 \ \ \ \ \text{or} \ \ \ \ L=\langle \cdots, -1\rangle \perp L_1 $$ by \cite[82:15a]{omeara_quadratic_1963}. Then $L_1\supseteq \langle  \pi_\frak p , -\Delta_\frak p \pi_\frak p \rangle$. By Lemma \ref{3.1}, one concludes that $L_1= \langle  \pi_\frak p , -\Delta_\frak p \pi_\frak p \rangle$ and
$$L = \langle \cdots,  -\Delta_\frak p, \pi_\frak p , -\Delta_\frak p \pi_\frak p \rangle \ \ \  \ \text{or} \ \ \  \ L = \langle \cdots, -1, \pi_\frak p, -\Delta_\frak p \pi_\frak p \rangle  $$ as desired.

Finally, we show that the sets of  integral $\mathcal{O}_{\frak p} $-lattices listed above are minimal.

Indeed,
$$ \begin{cases}
 \langle \Delta_\frak p,   -\pi_\frak p, \Delta_\frak p \pi_\frak p \rangle \ \text{represents} \   \langle \Delta_\frak p \rangle,  \ \langle \pi_\frak p \rangle,  \  \langle \Delta_\frak p \pi_\frak p \rangle \  \text{but does not represent}  \ \langle 1 \rangle \\
 \langle 1,   \pi_\frak p, -\Delta_\frak p \pi_\frak p \rangle \ \text{represents} \   \langle 1 \rangle,  \ \langle \pi_\frak p \rangle,  \  \langle \Delta_\frak p \pi_\frak p \rangle \  \text{but does not represent}  \ \langle \Delta_\frak p \rangle \\
  \langle -1,   \Delta_\frak p, \Delta_\frak p \pi_\frak p \rangle \ \text{represents} \   \langle 1 \rangle,  \ \langle \Delta_\frak p \rangle,  \  \langle \Delta_\frak p \pi_\frak p \rangle \  \text{but does not represent}  \ \langle \pi_\frak p \rangle \\
 \langle 1, -\Delta_\frak p, \pi_\frak p \rangle \ \text{represents} \ \langle 1 \rangle,  \  \langle \Delta_\frak p \rangle,  \ \langle \pi_\frak p \rangle    \ \text{but does not represent}  \  \langle \Delta_\frak p \pi_\frak p \rangle  \\
 \end{cases} $$
by \cite[63:15.Example and 63:17]{omeara_quadratic_1963}. It follows that the set of integral $\mathcal{O}_{\frak p}$-lattices listed above for $k=1$ is minimal.

For $k=2$, we take a binary integral $\mathcal{O}_{\frak p}$-lattice $L$ in the above list.

If $L=\langle 1,-1\rangle$, then $ \langle 1,-\Delta_\frak p \rangle \perp  \langle \pi_\frak p, -\Delta_\frak p \pi_\frak p \rangle$ does not represent $L$ by \cite[63:17]{omeara_quadratic_1963}  but represents the remaining six lattices in the list by \cite[Theorem 1]{omeara_integral_1958} and \cite[63:15.Example]{omeara_quadratic_1963}.

Otherwise, there is another binary integral $\mathcal{O}_{\frak p}$-lattice $L'$ in the above list satisfying $$F_\frak pL'\not \cong F_\frak pL  \ \ \  \text{and}   \ \ \ \det(F_\frak pL)=\det(F_\frak p L') . $$
We claim that $L'\perp \langle 1,-1\rangle$ does not represent $L$. Suppose not. Then $F_\frak p L'\perp \Bbb H$ represents $F_\frak p L$ as quadratic spaces.  This implies that $$F_\frak p L\perp \Bbb H \cong F_\frak p L'\perp \Bbb H$$ by Theorem \ref{rep}. By Witt cancellation (\cite[42:16.Theorem] {omeara_quadratic_1963}), one obtains $F_\frak pL' \cong F_\frak pL $. A contradiction is derived and the claim follows. By \cite[Theorem 1]{omeara_integral_1958} and Theorem \ref{rep}, one can verify that $L'\perp \langle 1,-1\rangle$
represents the other six lattices. Therefore, the set of binary lattices in the given list for $k=2$ is minimal.

For any integral $\mathcal{O}_{\frak p}$-lattice $L$ of  rank $k\geq 3$ in our list, there is another integral $\mathcal{O}_{\frak p}$-lattice $L'$ of rank $k$ in the  list satisfying $$F_\frak pL'\not \cong F_\frak pL  \ \ \  \text{and}   \ \ \ \det(F_\frak pL)=\det(F_\frak p L') . $$
Then $L'\perp \langle 1,-1\rangle$ does not represent $L$ but represents the rest of seven lattices in the given list by the same arguments as above. Therefore the testing set given for $k\ge 3$ is minimal.
\end{proof}

Since the lattices listed in Proposition \ref{3.2} are all $\mathcal{O}_{\frak p}$-maximal, one concludes that the minimal sets are unique up to isometry by \cite[91:2.Theorem]{omeara_quadratic_1963}. By applying Proposition \ref{3.2}, we can determine all $k$-universal lattices over the non-dyadic field $F_{\frak p}$ in terms of Jordan splittings. The  $k=1$ case has been done in \cite[Prop.\;2.3]{xu_indefinite_2020}. For convenience, we treat the cases $k=2$ and $k\geq 3$ separately.

\begin{prop}\label{3.3}
	Let $ M $ be an integral $ \mathcal{O}_{\frak p} $-lattice with the following Jordan splitting $$ M=M_{1}\perp M_{2}\perp \ldots \perp M_{t} $$ where $M_1$ is unimodular. Then
$ M $ is $2$-universal if and only if one of the following conditions holds:

	(a)  $ \rank (M_{1})=3 $ and $M_{2} $ is $\frak p$-modular with $ \rank (M_{2})\ge 2 $.
	
	(b) $M_{1}\cong \langle 1,1,1,1 \rangle$.
	
	(c) $ M_{1}\cong \langle 1,1,1,\Delta_\frak p \rangle$ and $ M_{2} $ is $ \frak p$-modular with $\rank (M_2)\geq 1$.
	
	(d) $\rank (M_1) \geq 5$.
\end{prop}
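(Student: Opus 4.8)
### Proof Strategy for Proposition 3.3

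The plan is to reduce the classification of $2$-universal lattices over $\mathcal{O}_\frak p$ to a finite check against the minimal testing set of $7$ binary lattices provided by Proposition \ref{3.2}. Recall that $M$ is $2$-universal precisely when it represents each of these seven lattices. Since all of them are $\mathcal{O}_\frak p$-maximal (being either unimodular or of the form $\langle \pi_\frak p, -\Delta_\frak p\pi_\frak p\rangle$), I expect to use the standard local representation machinery of \cite{omeara_integral_1958} together with Jordan splitting theory from O'Meara's \S 91--93. The key structural observation is that $2$-universality forces the quadratic space $F_\frak p M$ to be $2$-universal, which by Theorem \ref{2.1} means $\dim(F_\frak p M) \geq 4$ (i.e., $F_\frak p M \cong \Bbb H \perp \Bbb H$ or higher), so $\rank(M) \geq 4$ from the outset.

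The first step is the sufficiency direction. For each of the four listed conditions, I would exhibit that $M$ represents all seven testing lattices. Condition (d) with $\rank(M_1)\geq 5$ should follow quickly: a unimodular lattice of rank $\geq 5$ over a non-dyadic local field already represents every binary integral lattice, since it splits off enough unimodular hyperbolic planes and the orthogonal complement absorbs any binary unimodular or modular piece. For conditions (a), (b), (c), the verification is more delicate and relies on \cite[Theorem 1]{omeara_integral_1958} (the local representation criterion for modular lattices): one checks that the unimodular part $M_1$ handles the unimodular binary testing lattices $\langle 1,-1\rangle$, $\langle 1,-\Delta_\frak p\rangle$, while the combination of $M_1$ with the $\frak p$-modular layer $M_2$ handles the mixed-scale lattices $\langle 1,-\pi_\frak p\rangle$, $\langle \Delta_\frak p, -\Delta_\frak p\pi_\frak p\rangle$, $\langle 1,-\Delta_\frak p\pi_\frak p\rangle$, $\langle \Delta_\frak p,-\pi_\frak p\rangle$, and the purely $\frak p$-modular $\langle \pi_\frak p, -\Delta_\frak p\pi_\frak p\rangle$. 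The rank and determinant constraints in (a) and (c) are exactly what is needed for the $\frak p$-modular layer to absorb the scaled part of each testing lattice.

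The second step is necessity: I would assume $M$ is $2$-universal and show it must fall into one of the four cases. Write $\rank(M_1)=r$. Since $\rank(M)\geq 4$, either $r\geq 4$, or $r\leq 3$ and there are nontrivial higher Jordan components. If $r=4$, analyzing which unimodular quaternary lattices can represent all seven binaries pins $M_1$ down to $\langle 1,1,1,1\rangle$ (case b) or $\langle 1,1,1,\Delta_\frak p\rangle$ with an extra $\frak p$-modular layer (case c); the discriminant of the unimodular part controls which anisotropic binary modular lattice can be represented. If $r=3$, representing the purely modular lattice $\langle\pi_\frak p,-\Delta_\frak p\pi_\frak p\rangle$ forces a $\frak p$-modular $M_2$ of rank $\geq 2$ (case a), since the unimodular $M_1$ cannot contribute to a rank-$2$ $\frak p$-modular sublattice and the modular testing lattice must embed entirely into $M_2$. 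If $r\geq 5$ we land in case (d). The main obstacle here is the casework on $r=4$: one must carefully track discriminants and Hasse symbols to rule out unimodular quaternary lattices such as $\langle 1,1,\Delta_\frak p,\Delta_\frak p\rangle$ that fail to represent one of the anisotropic binary testing lattices.

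The hardest part, I expect, will be the bookkeeping in verifying representability for the mixed-scale testing lattices in cases (a) and (c), where one must invoke O'Meara's integral representation theorem for modular lattices and confirm that the rank bounds ($\rank(M_2)\geq 2$ in (a), $\rank(M_2)\geq 1$ in (c)) are both necessary and sufficient. I would handle this by reducing each representation question to a representation of a binary modular lattice by the modular layer $M_2$, using that over a non-dyadic field a $\frak p$-modular lattice is diagonalizable as $\pi_\frak p$ times a unimodular lattice, so the problem descends to unimodular representation theory where \cite[Theorem 1]{omeara_integral_1958} applies cleanly.
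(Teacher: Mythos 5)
Your skeleton --- reduce to the seven-lattice testing set of Proposition \ref{3.2}, prove sufficiency with \cite[Theorem 1]{omeara_integral_1958}, and run necessity as casework on $\rank(M_1)$ --- is exactly the paper's route, and the sufficiency half would go through essentially as you describe. The genuine problem is your necessity argument in the case $\rank(M_1)=3$. You claim that representing $\langle \pi_\frak p,-\Delta_\frak p\pi_\frak p\rangle$ forces a $\frak p$-modular $M_2$ of rank $\ge 2$ because ``the modular testing lattice must embed entirely into $M_2$''. Neither the mechanism nor the conclusion is correct. Representations do not respect Jordan components: $\langle 1,1,1,1\rangle$ (case (b), with no $M_2$ at all) represents $\langle \pi_\frak p,-\Delta_\frak p\pi_\frak p\rangle$; and in case (a) with $M_2\cong \langle \pi_\frak p,-\pi_\frak p\rangle$ the image cannot lie in $M_2$, since $F_\frak p M_2\cong \Bbb H\not\cong[\pi_\frak p,-\Delta_\frak p\pi_\frak p]$. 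Worse, the lattice $M=\langle 1,1,1\rangle\perp\langle \pi_\frak p\rangle$, whose second Jordan component is $\frak p$-modular of rank only $1$, \emph{does} represent $\langle \pi_\frak p,-\Delta_\frak p\pi_\frak p\rangle$: the generator of $\langle\pi_\frak p\rangle$ has norm $\pi_\frak p$, and $\langle1,1,1\rangle\cong H\perp\langle-1\rangle$ (where $H$ is the hyperbolic plane lattice, which represents all of $\cO_\frak p$ because $2\in\cO_\frak p^\times$) contains a vector of norm $-\Delta_\frak p\pi_\frak p$ orthogonal to it. So the purely modular testing lattice can never force $\rank(M_2)\ge2$, and your argument for case (a) collapses.

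What actually rules out $\rank(M_2)=1$ is a mixed-scale testing lattice, detected at the level of quadratic spaces. For $M=\langle1,1,1\rangle\perp\langle\pi_\frak p\rangle$ the space $F_\frak p M=[1,1,1,\pi_\frak p]$ has dimension $4$ and determinant $\pi_\frak p\not\equiv1$, hence is not $\Bbb H\perp\Bbb H$ and so not $2$-universal by Theorem \ref{2.1}; by Theorem \ref{rep} it fails to represent exactly one of $[1,-\pi_\frak p]$, $[\Delta_\frak p,-\Delta_\frak p\pi_\frak p]$ (which one depends on whether $-1\in(\cO_\frak p^\times)^2$). This is how the paper argues: representing all seven testing lattices forces, via \cite[Theorem 1]{omeara_integral_1958}, that $M_2$ is $\frak p$-modular and that $F_\frak p M_1\perp F_\frak p M_2$ is a $2$-universal \emph{space}, and then Theorem \ref{2.1} gives $\rank(M_2)\ge2$ because a quaternary space whose determinant has odd valuation is never $\Bbb H\perp\Bbb H$. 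Your proof needs this space-level detour; the same step is also what handles $\rank(M_1)=4$, $M_1\cong\langle1,1,1,\Delta_\frak p\rangle$ in case (c). A smaller slip: $\langle1,1,\Delta_\frak p,\Delta_\frak p\rangle$ is nothing to ``rule out'' --- over a non-dyadic local field unimodular lattices are classified by rank and determinant, so it is isometric to $\langle1,1,1,1\rangle$ and is $2$-universal.
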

\begin{proof} \underline{Necessity}.  Since $M$ represents both $ \langle 1,-1\rangle$ and $ \langle 1,-\Delta_\frak p \rangle$, we have $\rank (M_1)\geq 3$ by \cite[Theorem 1]{omeara_integral_1958} and  Theorem \ref{rep}.
	
	When $ \rank (M_{1})=3 $, the space $F_\frak p M_1$ is not  $2$-universal by Theorem\;\ref{2.1}. Since $M$ represents all the  7 binary lattices listed in Proposition\;\ref{3.2},  $M_2$ must be $\frak p$-modular and $F_\frak pM_1\perp F_\frak p M_2$ is a $2$-universal quadratic space by \cite[Theorem 1]{omeara_integral_1958}. Applying Theorem \ref{2.1}, one concludes that $\rank(M_2)\geq 2$.
	
	When $\rank(M_1)=4$, one obtains $M_1\cong \langle 1,1,1,1 \rangle$ or $\langle 1,1,1,\Delta_\frak p \rangle$ by \cite[92:1]{omeara_quadratic_1963}. Suppose $M_1\cong \langle 1,1,1,\Delta_\frak p \rangle$. Then $F_\frak pM_1$ is not a $2$-universal quadratic space by Theorem \ref{2.1}. This implies that $M_1$ cannot represent all the binary lattices listed in Proposition\;\ref{3.2} by \cite[Theorem 1]{omeara_integral_1958}. Therefore, $M_2$ is $\frak p$-modular with $\rank(M_2)\geq 1$ by \cite[Theorem 1]{omeara_integral_1958}.

\medskip	
	
\underline{Sufficiency}. It suffices to verify that $M$ represents all the binary lattices listed in Proposition\;\ref{3.2} if one of the conditions $(a)$, $(b)$, $(c)$ or $(d)$ holds.  	
	
If condition $(a)$ holds, then $M_1$ represents both $  \langle 1,-1\rangle $ and  $\langle 1,-\Delta_\frak p \rangle$ by \cite[92:1]{omeara_quadratic_1963}. Since $M_1$ represents all elements in $ \mathcal{O}_{\mathfrak{p}}^{\times} $ by \cite[92:1b]{omeara_quadratic_1963}, one concludes that $M_1\perp M_2$ represents the other binary lattices listed in Proposition\;\ref{3.2} by \cite[Theorem 1]{omeara_integral_1958} and Theorem \ref{rep}.

If condition $(b)$ holds, then $F_\frak p M_1\cong \Bbb H\perp \Bbb H$. In this case $M_1$ represents all the binary lattices listed in Proposition\;\ref{3.2} by  \cite[Theorem 1]{omeara_integral_1958} and Theorem\;\ref{2.1}.

If condition $(c)$ holds, then $M_1$ represents both $  \langle 1,-1\rangle $ and  $\langle 1,-\Delta_\frak p \rangle$ and $M_1\perp M_2$ represents the other binary lattices in the testing set by \cite[Theorem 1]{omeara_integral_1958} and Theorem \ref{rep}.

If condition $(d)$ holds, then $M_1$ represents all the binary lattices in the list by \cite[Theorem 1]{omeara_integral_1958} and Theorem \ref{rep}.
 \end{proof}

Finally, we determine all $k$-universal lattices for $k\geq 3$.

\begin{prop}\label{3.4}
Suppose $k\ge 3$.	Let $ M $ be an integral $ \mathcal{O}_{\frak p} $-lattice with the following Jordan splitting $$ M=M_{1}\perp M_{2}\perp \ldots \perp M_{t} $$ where $M_1$ is unimodular. Then $ M $ is $k$-universal if and only if one of the following conditions holds:

	(a)  $ \rank (M_{1})=k+1 $ and $M_{2} $ is $\frak p$-modular with $ \rank (M_{2})\ge 2 $.
	
	(b) $\rank (M_{1}) =k+2 $ and $ M_{2} $ is $ \frak p$-modular with $\rank (M_2)\geq 1$.
	
	(c) $\rank (M_1) \geq k+3 $.
\end{prop}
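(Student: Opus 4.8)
The plan is to reduce $k$-universality to a finite check via the minimal testing set of Proposition \ref{3.2}: $M$ is $k$-universal if and only if it represents the eight rank-$k$ lattices listed there. The whole argument then runs in close parallel to the proof of Proposition \ref{3.3}, the essential simplification being that for $k\ge 3$ the $k$-universal spaces are, by Theorem \ref{2.1}, exactly those of dimension $\ge k+3$; the isotropic exceptions present for $k=1,2$ no longer occur, so the numerology is governed throughout by the single threshold $\dim\ge k+3$. The tools I would use are O'Meara's integral representation theorem over non-dyadic local fields (\cite[Theorem 1]{omeara_integral_1958}), the space-level criteria of Theorem \ref{rep} and Theorem \ref{2.1}, and the representation properties of unimodular lattices in \cite[92:1]{omeara_quadratic_1963}.

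For necessity, I would first exploit the two unimodular testing lattices $\langle 1,\ldots,1\rangle$ and $\langle 1,\ldots,1,\Delta_\frak p\rangle$. They have distinct determinants, so if $\rank(M_1)=k$ then $M_1$ would have to be isometric to both, which is impossible; by \cite[Theorem 1]{omeara_integral_1958} and Theorem \ref{rep} this forces $\rank(M_1)\ge k+1$. Next, since $k\ge 3$, a space of dimension $k+1$ or $k+2$ is not $k$-universal by Theorem \ref{2.1}, so $F_\frak p M_1$ alone cannot account for the remaining six testing lattices, whose constituents include the $\frak p$-modular blocks $\langle \pi_\frak p,-\Delta_\frak p\pi_\frak p\rangle$, $\langle -\pi_\frak p\rangle$, $\langle -\Delta_\frak p\pi_\frak p\rangle$. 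Applying \cite[Theorem 1]{omeara_integral_1958} to these representations forces the second Jordan component $M_2$ to be $\frak p$-modular and forces $F_\frak p(M_1\perp M_2)$ to be a $k$-universal space; by Theorem \ref{2.1} this gives $\rank(M_1\perp M_2)\ge k+3$. Hence $\rank(M_1)=k+1$ yields $\rank(M_2)\ge 2$ (case (a)), $\rank(M_1)=k+2$ yields $\rank(M_2)\ge 1$ (case (b)), and $\rank(M_1)\ge k+3$ is case (c).

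For sufficiency, I would verify that each of (a), (b), (c) lets $M$ represent all eight testing lattices. The unimodular ones are handled by $M_1$ alone: over a non-dyadic field the Hilbert symbol of two units is trivial, so a unimodular space is determined by its rank and determinant, and a unimodular $M_1$ of rank $\ge k+1$ represents every unimodular rank-$k$ lattice by \cite[92:1]{omeara_quadratic_1963} together with Theorem \ref{rep} (the determinants match up to squares, and the Hasse symbols agree automatically). For the six testing lattices carrying a $\frak p$-modular part, under (a) and (b) the component $M_2$ is $\frak p$-modular of the prescribed rank and $F_\frak p(M_1\perp M_2)$ has dimension $\ge k+3$, hence is $k$-universal by Theorem \ref{2.1}; combining this with \cite[Theorem 1]{omeara_integral_1958} shows $M_1\perp M_2$ represents each of them. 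Under (c) the unimodular part already has rank $\ge k+3$, so $F_\frak p M_1$ is $k$-universal and $M_1$ represents all rank-$k$ integral lattices directly.

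The step I expect to be the main obstacle is the simultaneous matching of the two scales when representing the testing lattices that carry both a unimodular and a $\frak p$-modular constituent: one must guarantee that the unimodular block lands in $M_1$ while the $\frak p$-modular block lands in a genuinely $\frak p$-modular component of $M$ (not one of deeper scale), and that these two representations can be performed independently so that the space-level universality of $F_\frak p(M_1\perp M_2)$ upgrades to an integral representation. This is precisely the content of O'Meara's integral representation theorem over non-dyadic fields, and invoking it rather than reproving it is what keeps the case analysis short; the remaining work is the routine bookkeeping of ranks against the threshold $k+3$ furnished by Theorem \ref{2.1}.
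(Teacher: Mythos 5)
Your proposal is correct and follows essentially the same route as the paper: both reduce $k$-universality to representing the eight rank-$k$ lattices of Proposition \ref{3.2}, derive the rank and $\frak p$-modularity constraints in the necessity direction from \cite[Theorem 1]{omeara_integral_1958} together with Theorems \ref{rep} and \ref{2.1}, and verify sufficiency by combining \cite[92:1, 92:1b]{omeara_quadratic_1963} with O'Meara's integral representation theorem exactly as you describe. The scale-matching issue you flag as the main obstacle is indeed absorbed into the citation of \cite[Theorem 1]{omeara_integral_1958} in the paper as well.
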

\begin{proof} \underline{Necessity}.  Since $M$ represents the rank $k$ lattices $ \langle 1, \cdots, 1 \rangle$ and $\langle 1, \cdots, 1, \Delta_\frak p \rangle$, we have $\rank (M_1)\geq k+1$ by \cite[Theorem 1]{omeara_integral_1958} and Theorem \ref{rep}.
	
	When $ \rank (M_{1})=k+1$, the quadratic space  $F_\frak pM_1$ is not  $k$-universal by Theorem\;\ref{2.1}. Since $M$ represents the 8 lattices of rank $k$ listed
in Proposition\;\ref{3.2}, it follows from \cite[Theorem 1]{omeara_integral_1958} that $M_2$ is $\frak p$-modular and $F_\frak pM_1\perp F_\frak p M_2$ is a $k$-universal quadratic space. Then Theorem\;\ref{2.1} implies $\rank (M_2)\geq 2$.
	
	When $\rank(M_1)=k+2$, the same argument as above shows that $M_2$ is $\frak p$-modular with $\rank (M_2)\geq 1$.
	
\medskip	
	
\underline{Sufficiency}. We need to verify that $M$ represents all the lattices of rank $k$ listed in Proposition\;\ref{3.2} under one of the conditions $(a)$, $(b)$ and $(c)$.  	
	
If condition $(a)$ or $(b)$ holds, then $M_1$ represents all unimodular lattices of rank less than $k+1$ by \cite[92:1 and 92:1b]{omeara_quadratic_1963}. Therefore
 $M_1\perp M_2$ represents all the lattices of rank $k$ listed in Proposition\;\ref{3.2} by \cite[Theorem 1]{omeara_integral_1958} and Theorem \ref{rep}.

If condition $(c)$ holds, the same method shows that  $M_1$ represents all the lattices of rank $k$ in the testing set given in Proposition\;\ref{3.2}.
 \end{proof}

\section{On 2-universal quaternary lattices over dyadic local fields}\label{sec4}

Determining $k$-universal lattices over dyadic local fields is much more difficult than over non-dyadic fields. Based on O'Meara's work \cite{omeara_integral_1958},
a classification of (classic) $k$-universal lattices using Jordan splittings has been obtained in \cite{HeHu1} for unramified dyadic local fields. By introducing the concept of
bases of norm generators (BONGs in short), Beli has recently developed an integral representation theory over general dyadic local fields (cf. \cite{beli_representations_2006} and \cite{beli_representations_2019}). He classified 1-universal lattices over general dyadic fields in \cite{beli_universal_2020} by his theory. In a forthcoming work \cite{HeHu2},  all (classic) $k$-universal lattices for $k\ge 2$ will be determined by using BONGs.

In this paper our main concern is about the local-global principle for $k$-universality. In view of Corollary\;\ref{2.3}, we are satisfied with the determination of  quaternary 2-universal lattices over a general dyadic local field.

\medskip

Throughout this section, we fix a dyadic prime $\mathfrak{p}$ of the number field $F$, and we put $e= \ord_\frak p (2) $. For $ a\in \mathcal{O}_{\frak p}^{\times}$, define its \textit{quadratic defect} by $$ \mathfrak{d}(a)=\bigcap_{x\in F_\frak p}(a-x^{2})\mathcal{O}_{\frak p}$$
   as in \cite[\S\,63A.]{omeara_quadratic_1963}.
   Recall that we use $\Delta_{\frak p}$ to denote an element in $\cO_{\frak p}^\times$ such that $F_\frak p(\sqrt{\Delta_\frak p})$ is the unique quadratic unramified extension of $F_\frak p$. This $\Delta_{\frak p}$ satisfies $ \mathfrak{d}(\Delta_{\frak p})=4\mathcal{O}_{\frak p} $. We may and we shall assume that $\Delta_{\frak p}=1-4\rho_\frak p $ with $\rho_{\frak p}\in \cO_{\frak p}^\times$ (cf. \cite[p.251, \S\;93]{omeara_quadratic_1963}).

\medskip

The Hilbert symbol computation in the following lemma is an explicit version of \cite[Lemma 3]{hsia_spinor_1975}.

\begin{lem} \label{4.1} Let  $t$ be an odd integer with $1\leq t\leq 2e-1$ and $\sigma \in \mathcal{O}_{\frak p}^\times$. Then
$$\big(1-\sigma \pi_\frak p^t\,,\; 1-4\rho_\frak p \sigma^{-1} \pi_\frak p^{-t}\big)_\frak p =-1.$$
\end{lem}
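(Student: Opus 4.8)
The plan is to reduce the given symbol to a Hilbert symbol against the unramified unit $\Delta_\frak p$, for which there is a trivial valuation formula. Throughout write $u=\sigma\pi_\frak p^t$, so that the two arguments are $a:=1-u$ and $b:=1-4\rho_\frak p u^{-1}=(u-4\rho_\frak p)/u$.

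First I would clear the denominator using bilinearity together with the Steinberg relation $(x,1-x)_\frak p=1$. Taking $x=u$ gives $(1-u,u)_\frak p=(u,1-u)_\frak p=1$, whence
\[
(a,b)_\frak p=(1-u,\,u-4\rho_\frak p)_\frak p\,(1-u,\,u)_\frak p^{-1}=(1-u,\,u-4\rho_\frak p)_\frak p.
\]
Setting $c:=u-4\rho_\frak p$, the decisive observation is the identity $a+c=(1-u)+(u-4\rho_\frak p)=1-4\rho_\frak p=\Delta_\frak p$. I would then exploit this with a second application of Steinberg's relation: dividing by $\Delta_\frak p$ gives $a\Delta_\frak p^{-1}+c\Delta_\frak p^{-1}=1$, so $(a\Delta_\frak p^{-1},\,c\Delta_\frak p^{-1})_\frak p=1$. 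Expanding by bilinearity, and using that each symbol equals its own inverse, yields
\[
(a,c)_\frak p=(a,\Delta_\frak p)_\frak p\,(\Delta_\frak p,c)_\frak p\,(\Delta_\frak p,\Delta_\frak p)_\frak p.
\]

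Now I would invoke the norm description of the unramified extension: since $F_\frak p(\sqrt{\Delta_\frak p})/F_\frak p$ is unramified, an element is a norm from it exactly when its valuation is even, so $(\Delta_\frak p,y)_\frak p=(-1)^{\ord_\frak p(y)}$ for every $y\in F_\frak p^\times$. In particular $(a,\Delta_\frak p)_\frak p=1$ because $a=1-u\in\cO_\frak p^\times$, and $(\Delta_\frak p,\Delta_\frak p)_\frak p=1$ because $\ord_\frak p(\Delta_\frak p)=0$. Therefore $(a,b)_\frak p=(a,c)_\frak p=(\Delta_\frak p,c)_\frak p=(-1)^{\ord_\frak p(c)}$. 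It remains to compute $\ord_\frak p(c)$: because $1\le t\le 2e-1$, we have $\ord_\frak p(u)=t<2e=\ord_\frak p(4\rho_\frak p)$, so there is no cancellation and $\ord_\frak p(c)=\ord_\frak p(u-4\rho_\frak p)=t$, which is odd by hypothesis. Hence $(a,b)_\frak p=(-1)^t=-1$, as claimed.

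I do not anticipate a serious obstacle; the content is just the two Steinberg reductions plus the unramified-symbol formula. The one point requiring care is the order bookkeeping — confirming that $a$ is a unit and that the strict inequality $t<2e$ forces $\ord_\frak p(c)=t$ with no cancellation — together with correct sign tracking when expanding the bilinear symbols (harmless since every value lies in $\{\pm1\}$). The conceptual key, and the only genuinely nonobvious move, is noticing that after the first reduction the two arguments sum to $\Delta_\frak p$, which is precisely what turns the computation into the elementary unramified case.
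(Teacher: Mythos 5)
Your proposal is correct and is essentially the paper's own proof: both clear the denominator with the Steinberg relation, both hinge on the identity $(1-\sigma\pi_\frak p^t)+(\sigma\pi_\frak p^t-4\rho_\frak p)=\Delta_\frak p$ to set up a second Steinberg application after dividing by $\Delta_\frak p$, and both finish with the unramified valuation formula $(\Delta_\frak p,y)_\frak p=(-1)^{\ord_\frak p(y)}$ applied to $\sigma\pi_\frak p^t-4\rho_\frak p$, whose valuation is the odd number $t$ since $t<2e$. The only difference is organizational: the paper evaluates $(\Delta_\frak p,\sigma\pi_\frak p^t-4\rho_\frak p)_\frak p=-1$ first and threads it through a sequential chain of equalities, whereas you expand the relation $(a\Delta_\frak p^{-1},c\Delta_\frak p^{-1})_\frak p=1$ bilinearly and evaluate the three auxiliary symbols at the end.
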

\begin{proof} First, we have $\big(\Delta_\frak p\,,\, \sigma \pi_\frak p^t-4\rho_\frak p\big)_\frak p=-1$ by  \cite[63:11a]{omeara_quadratic_1963}. Therefore
$$ \begin{aligned}  &
\big(1-\sigma \pi_\frak p^t\,,\, 1-4\rho_\frak p \sigma^{-1} \pi_\frak p^{-t} \big)_\frak p =\bigg(1-\sigma \pi_\frak p^t\,,\, \frac{\sigma \pi_\frak p^t-4\rho_\frak p}{ \sigma \pi_\frak p^{t}}\bigg)_\frak p=\big(1-\sigma \pi_\frak p^t\,,\, \sigma \pi_\frak p^t-4\rho_\frak p\big)_\frak p \\
 = & - \big((1-\sigma \pi_\frak p^t)\Delta_\frak p\,,\, \sigma \pi_\frak p^t-4\rho_\frak p\big)_\frak p = - \big((1-\sigma \pi_\frak p^t)\Delta_\frak p\,,\,
 \Delta_\frak p -(1-\sigma \pi_\frak p^t )\big)_\frak p \\
= & -\left((1-\sigma \pi_\frak p^t)\Delta_\frak p\,,\, \big(\Delta_\frak p -(1-\sigma \pi_\frak p^t )\big)\Delta_\frak p\right)_\frak p=
-\bigg(\frac{1-\sigma \pi_\frak p^t}{\Delta_\frak p}\,,\, 1 - \frac{1-\sigma \pi_\frak p^t}{\Delta_\frak p}\bigg)_\frak p=-1 \end{aligned} $$
by \cite[57:10 and 63:11a]{omeara_quadratic_1963}.
\end{proof}

Let $N\frak p$ be the number of elements in the residue field of $F$ at $\frak p$. By \cite[63:9, 63:20.Theorem and 63:22.Theorem]{omeara_quadratic_1963}, there are $8(N\frak p)^e-1$ binary quadratic spaces over $F_\frak p$ up to isometry. One can classify binary $\mathcal{O}_{\frak p}$-maximal lattices accordingly.

   For any elements $\xi,\,\eta\in \cO_{\frak p}$ and $\gamma\in F_{\frak p}^\times$, we  write $ \gamma A(\xi,\eta) $ for the binary lattice $$ \mathcal{O}_{\frak p}x+\mathcal{O}_{\frak p}y  \ \  \ \text{ with } \ \ \  Q(x)=\gamma\xi,  \ B(x,y)=\gamma   \ \text{ and } \  Q(y)= \gamma\eta . $$

\begin{prop}\label{4.2} Any binary $\mathcal{O}_{\frak p}$-maximal lattice is isometric to one of the following lattices:

 Type  I:  $$\pi_\frak p^{-i} A(\pi_{\frak p}^i, \sigma \pi_\frak p^{i+1})  \ \ \ \text{ and } \ \ \ (1-4\rho_\frak p \sigma^{-1} \pi_\frak p^{-2i-1})\pi_\frak p^{-i} A(\pi_{\frak p}^i, \sigma \pi_\frak p^{i+1}) $$
with $0\leq i\leq e-1$ and $\sigma \in \mathcal{O}_{\frak p}^{\times}$.

 Type II: \  $\langle 1, \sigma \pi_\frak p \rangle $  and   $\langle \Delta_\frak p, \sigma \pi_\frak p \rangle $
with $\sigma \in \mathcal{O}_{\frak p}^{\times}$.

Type III: \  $ 2^{-1}A(2, 2\rho_\frak p) $ and  $ 2^{-1}\pi_\frak p A(2, 2\rho_\frak p)$.

Type IV: \ $2^{-1}A(0, 0)$.
\end{prop}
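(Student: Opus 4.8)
The plan is to classify all binary $\mathcal{O}_\frak p$-maximal lattices over the dyadic local field $F_\frak p$ by first classifying the underlying binary quadratic spaces and then attaching to each space its (unique, by \cite[91:2.Theorem]{omeara_quadratic_1963}) maximal lattice. Since a binary quadratic space is determined by its determinant and Hasse symbol (subject to the hyperbolic-plane exception), and there are $8(N\frak p)^e-1$ such spaces by the count cited just before the proposition, the strategy is to organize the spaces according to whether they are isotropic or anisotropic, and in the anisotropic case according to the quadratic defect of $-\det$, which controls how the maximal lattice sits inside the space.

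First I would dispose of the isotropic case. A binary isotropic space is a hyperbolic plane $\Bbb H$, whose maximal lattice is $2^{-1}A(0,0)$; this accounts for Type IV. Then for the anisotropic spaces, I would split into the \emph{unramified} and \emph{ramified} subcases. For a space of the form $[1,\sigma\pi_\frak p]$ or $[\Delta_\frak p,\sigma\pi_\frak p]$ with $\sigma\in\mathcal{O}_\frak p^\times$, the lattice $\langle 1,\sigma\pi_\frak p\rangle$ or $\langle\Delta_\frak p,\sigma\pi_\frak p\rangle$ is already $\mathcal{O}_\frak p$-maximal by \cite[82:19]{omeara_quadratic_1963}, giving Type II. The key quantitative step is to verify, via O'Meara's description of maximal lattices on anisotropic spaces (\cite[91:1.Theorem and 93:\ldots]{omeara_quadratic_1963}), that every anisotropic binary space arises from one of the listed normal forms, and to pin down which determinant/Hasse-symbol pairs correspond to each family.

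The main work, and the place where Lemma \ref{4.1} is used, lies in the Type I and Type III families. For the lattices $\pi_\frak p^{-i}A(\pi_\frak p^i,\sigma\pi_\frak p^{i+1})$ with $0\le i\le e-1$, I would compute the determinant and Hasse symbol of the associated space, and then show that the ``twisted'' companion $(1-4\rho_\frak p\sigma^{-1}\pi_\frak p^{-2i-1})\pi_\frak p^{-i}A(\pi_\frak p^i,\sigma\pi_\frak p^{i+1})$ realizes the space with the \emph{opposite} Hasse symbol but the same determinant; this is exactly where Lemma \ref{4.1} supplies the sign flip, since the relevant Hilbert symbol $\big(1-\sigma\pi_\frak p^t,\,1-4\rho_\frak p\sigma^{-1}\pi_\frak p^{-t}\big)_\frak p=-1$ governs the change of Hasse symbol upon scaling by the unit $1-4\rho_\frak p\sigma^{-1}\pi_\frak p^{-2i-1}$. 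The constraint $0\le i\le e-1$ and the oddness of $t=2i+1$ with $1\le t\le 2e-1$ match precisely the hypotheses of Lemma \ref{4.1}. Type III handles the single remaining anisotropic scale value at the ``boundary'' $i=e$ where the norm group behaves differently; here $2^{-1}A(2,2\rho_\frak p)$ and its $\pi_\frak p$-scaling $2^{-1}\pi_\frak p A(2,2\rho_\frak p)$ cover the two spaces not captured by Types I and II.

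The hard part will be the bookkeeping that the four types together produce \emph{exactly} $8(N\frak p)^e-1$ isometry classes with no overlaps and no omissions: one must check that the determinant and Hasse-symbol invariants computed across the ranges $0\le i\le e-1$, $\sigma\in\mathcal{O}_\frak p^\times/(\text{appropriate equivalence})$, and the four type labels account for every anisotropic class once and the unique isotropic class once. I expect the subtle point to be correctly identifying the equivalence on $\sigma$ induced by isometry (so that the parametrization is genuinely a bijection rather than a surjection with multiplicity), and ensuring the twisted Type I lattices are not isometric to their untwisted counterparts — which is precisely what the $-1$ in Lemma \ref{4.1} guarantees by forcing the Hasse symbols to differ.
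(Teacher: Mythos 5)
Your reduction and case decomposition coincide with the paper's: both use the uniqueness of $\mathcal{O}_{\frak p}$-maximal lattices on a fixed space (\cite[91:2.Theorem]{omeara_quadratic_1963}) to reduce the proposition to showing that the listed lattices exhaust all binary quadratic spaces over $F_{\frak p}$; both then split by determinant into the hyperbolic case (Type IV), the unit-determinant case with $-\det V\in\Delta_{\frak p}(\mathcal{O}_{\frak p}^{\times})^2$ (Type III), the remaining unit determinants (Type I), and the prime determinants (Type II); and both use Lemma~\ref{4.1} in exactly the same way, namely to show that the unit $1-4\rho_{\frak p}\sigma^{-1}\pi_{\frak p}^{-2i-1}$ is not represented by the Type I space, so that scaling by it yields the non-isometric space with the same determinant and opposite Hasse symbol (\cite[63:15.Example (iii)]{omeara_quadratic_1963}). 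The genuine divergence is the exhaustiveness argument. You propose to count your list against the total $8(N\frak p)^e-1$ and to verify injectivity of the parametrization (your ``no overlaps'' condition), which you correctly flag as the hard part. The paper does none of this: it invokes the quadratic defect trichotomy \cite[63:2]{omeara_quadratic_1963} --- every unit has defect $0$, $4\mathcal{O}_{\frak p}$, or $\frak p^{2i+1}$ with $0\le i\le e-1$ --- so that any unit determinant $\epsilon$ outside Types III/IV satisfies $-\epsilon \in (1-\sigma\pi_{\frak p}^{2i+1})(F_{\frak p}^{\times})^2$ for some such $i$ and $\sigma$; combined with Theorem~\ref{ind} (exactly two spaces per determinant outside the one-dimensional and hyperbolic cases), surjectivity is then immediate, with no counting and no need to decide when distinct parameters $(i,\sigma)$ give isometric lattices. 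Two further points are worth making. First, the proposition asserts only that every binary maximal lattice appears in the list, not that the list is irredundant, so the ``no overlaps'' half of your bookkeeping is forced on you only by the counting route; and since determining which determinant classes your Type I family realizes requires the defect theory anyway, the counting route strictly contains the paper's argument and adds work on top of it --- replacing your counting step by a direct appeal to \cite[63:2]{omeara_quadratic_1963} turns your outline into the paper's proof. Second, a citation detail: the paper establishes maximality of the Type II lattices via \cite[91:1.Theorem]{omeara_quadratic_1963}, and of the Type I and scaled Type III lattices via 91:1 together with Riehm's principle of domination; your appeal to 82:19 for Type II is not obviously sufficient in the dyadic case and should be replaced accordingly.
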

\begin{proof}There is only one isometry class of $\mathcal{O}_{\frak p}$-maximal lattices in a given quadratic space (cf. \cite[91:2. Theorem]{omeara_quadratic_1963}). So we only need to prove that the lattices listed above exhaust all possible binary quadratic spaces over $F_{\frak p}$.

\medskip

Suppose that $V$ is a binary quadratic space whose discriminant is a unit $\epsilon$.

If $-\epsilon$ is a square, then $V\cong \Bbb H$ and the lattice of Type IV is an $\mathcal{O}_{\frak p}$-maximal lattice by \cite[82:21]{omeara_quadratic_1963}.

If $-\epsilon \in \Delta_\frak p (\mathcal{O}_{\frak p}^{\times})^2$, there are two quadratic spaces with discriminant $\epsilon$ up to isometry by \cite[63:15.Example]{omeara_quadratic_1963}. The $\mathcal{O}_{\frak p}$-maximal lattice on the quadratic space which represents $1$ is the lattice $2^{-1}A(2, 2\rho_\frak p)$ in Type III by \cite[93:11]{omeara_quadratic_1963}. The  $\mathcal{O}_{\frak p}$-maximal lattice on the other quadratic space, which represents $\pi_\frak p$, is the lattice $ 2^{-1}\pi_\frak p A(2, 2\rho_\frak p)$ in Type III, by \cite[91:1.Theorem]{omeara_quadratic_1963} and the principle of domination in \cite{riehm_integral_1964}.

Otherwise we can assume that $$-\epsilon=1-\sigma \pi_\frak p^{2i+1} \ \ \ \text{ with } \ \ \ 0\leq i\leq e-1 \ \ \ \text{and} \ \ \ \sigma\in \mathcal{O}_{\frak p}^\times $$ by \cite[63:2]{omeara_quadratic_1963}. Up to isometry there are only two quadratic spaces  with  discriminant $\epsilon$ by Theorem \ref{ind}. By scaling if necessary, one can assume that $V$ represents $1$. Then the lattice  $\pi_\frak p^{-i} A(\pi_{\frak p}^i, \sigma \pi_\frak p^{i+1})$ in Type I is the $\mathcal{O}_{\frak p}$-maximal lattice on $V$ by \cite[91:1.Theorem]{omeara_quadratic_1963} and the principle of domination in \cite{riehm_integral_1964}. Lemma \ref{4.1} implies that $1-4\rho_\frak p \sigma^{-1} \pi_\frak p^{-2i-1}$ is not represented by $V$. So one concludes that the other quadratic space with  discriminant $\epsilon$ can be obtained by scaling $V$ with $1-4\rho_\frak p \sigma^{-1} \pi_\frak p^{-2i-1}$ by \cite[63:15.Example (iii)]{omeara_quadratic_1963}. Thus, the lattice $$(1-4\rho_\frak p \sigma^{-1} \pi_\frak p^{-2i-1})\pi_\frak p^{-i} A(\pi_{\frak p}^i, \sigma \pi_\frak p^{i+1})  \ \ \ \text{in Type I} $$ is the $\mathcal{O}_{\frak p}$-maximal lattice on this space.

\medskip

Now suppose that $V$ is a binary quadratic space with  discriminant $\epsilon\pi_\frak p$ for some $\epsilon\in \mathcal{O}_{\frak p}^\times$. By scaling if necessary, one can assume that $V$ represents $1$. Then the lattice $\langle 1, \epsilon \pi_\frak p \rangle$ in Type II is the  $\mathcal{O}_{\frak p}$-maximal lattice on $V$ by \cite[91:1.Theorem]{omeara_quadratic_1963}. Since $\Delta_\frak p$ is not represented by $V$ by \cite[63:11a]{omeara_quadratic_1963}, the other quadratic space with discriminant $\epsilon\pi_\frak p$ is  $[\Delta_\frak p, \Delta_\frak p \epsilon \pi_\frak p]$, according to \cite[ 63:15.Example(iii); 63:20.Theorem and 63:22.Theorem]{omeara_quadratic_1963}. The lattice $\langle \Delta_\frak p, \Delta_\frak p\epsilon \pi_\frak p \rangle$ in Type II is the $\mathcal{O}_{\frak p}$-maximal lattice on this space.
\end{proof}

The following result can be regarded as  a local analogue of the Conway-Schneeberger theorem for $2$-universal integral lattices over dyadic local fields.

\begin{cor}\label{4.3}  An integral $\cO_{\frak p}$-lattice is $2$-universal if and only if it represents all the lattices of type I, II, III and IV in Proposition $\ref{4.2}$.
\end{cor}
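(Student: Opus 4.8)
The plan is to deduce the statement directly from Proposition \ref{4.2} together with the standard theory of maximal lattices, mirroring the opening argument in the proof of Proposition \ref{3.2}. The necessity (``only if'') is immediate from the definition: a $2$-universal lattice represents every integral $\mathcal{O}_\frak p$-lattice of rank $2$, hence in particular it represents each lattice of Type I, II, III and IV, all of which are binary integral lattices.

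For the sufficiency (``if'') I would argue as follows. Assume $L$ represents all the lattices of Types I--IV, and let $M$ be an arbitrary integral $\mathcal{O}_\frak p$-lattice of rank $2$. By \cite[82:18]{omeara_quadratic_1963}, $M$ is contained in some $\mathcal{O}_\frak p$-maximal lattice $M'$ on the binary space $F_\frak p M$, and by \cite[91:2.Theorem]{omeara_quadratic_1963} the maximal lattice on a fixed space is unique up to isometry. Since Proposition \ref{4.2} enumerates \emph{all} binary $\mathcal{O}_\frak p$-maximal lattices up to isometry, $M'$ must be isometric to one of the lattices of Types I--IV, and is therefore represented by $L$. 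Because $M\subseteq M'$ and $F_\frak p M=F_\frak p M'$, the inclusion is itself a representation $M\to M'$; composing it with $M'\to L$ yields $M\to L$. As $M$ was arbitrary, $L$ is $2$-universal.

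I do not expect any genuine obstacle here: the substantive work---namely, showing that Types I--IV are precisely the isometry classes of binary $\mathcal{O}_\frak p$-maximal lattices---has already been carried out in Proposition \ref{4.2}, so this corollary is essentially a repackaging of that classification through the maximal-lattice containment principle. The only point requiring care is the transitivity of representation in the final step, which is routine once one observes that a rank-$2$ sublattice $M\subseteq M'$ spans the same quadratic plane $F_\frak p M=F_\frak p M'$, so that the inclusion restricts a representation $M'\to L$ to a representation $M\to L$.
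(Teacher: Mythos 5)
Your proof is correct and follows exactly the paper's route: the paper's own proof of Corollary \ref{4.3} simply invokes the argument opening the proof of Proposition \ref{3.2}, namely that every integral lattice lies in an $\mathcal{O}_{\frak p}$-maximal lattice (O'Meara 82:18), that maximal lattices on a fixed space are unique up to isometry (91:2), and that Proposition \ref{4.2} lists all binary maximal lattices. You have merely spelled out the details (including the routine transitivity step) that the paper leaves implicit.
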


\begin{proof} Similarly as in the proof of Proposition\;\ref{3.2}, to test 2-universality of an integral lattice it suffices to check that it  represents the $\mathcal{O}_{\frak p}$-maximal lattices in all binary quadratic spaces. So the result is immediate from  Proposition \ref{4.2}.
\end{proof}

\begin{lem}\label{4.4} Suppose $M = 2^{-1} A(0, 0) \perp N$ where $N$ is an $\mathcal{O}_{\frak p}$-lattice with $\frak s(N) \subseteq 2^{-1} \frak p$. If $\frak n(N) \subseteq \frak p$, then $M$ cannot represent $ 2^{-1} A(2, 2\rho_\frak p) $.
\end{lem}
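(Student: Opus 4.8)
The plan is to assume a representation exists, reduce it modulo $\frak p$, and land in a characteristic-two computation where an Artin--Schreier obstruction separates the isotropic plane $2^{-1}A(0,0)$ from the anisotropic plane $2^{-1}A(2,2\rho_\frak p)$. Write $\{u,v\}$ for the basis of $2^{-1}A(2,2\rho_\frak p)$ with $Q(u)=1$, $Q(v)=\rho_\frak p$, $B(u,v)=2^{-1}$, and $\{x,y\}$ for the basis of $2^{-1}A(0,0)$ with $Q(x)=Q(y)=0$, $B(x,y)=2^{-1}$. Suppose $\sigma$ represents $2^{-1}A(2,2\rho_\frak p)$ into $M$ and write $\sigma(u)=a_1x+b_1y+n_1$, $\sigma(v)=a_2x+b_2y+n_2$ with $a_i,b_i\in\cO_\frak p$ and $n_i\in N$. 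The defining identities $Q(\sigma(u))=1$, $Q(\sigma(v))=\rho_\frak p$ and $B(\sigma(u),\sigma(v))=2^{-1}$ (the last multiplied by $2$) read
\[ a_1b_1+Q(n_1)=1,\qquad a_2b_2+Q(n_2)=\rho_\frak p,\qquad a_1b_2+a_2b_1+2B(n_1,n_2)=1. \]

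This is exactly where the two hypotheses enter. The condition $\frak n(N)\subseteq\frak p$ gives $Q(n_1),Q(n_2)\in\frak p$, and $\frak s(N)\subseteq 2^{-1}\frak p$ gives $2B(n_1,n_2)\in 2\frak s(N)\subseteq\frak p$. Reducing the three identities in the residue field $\kappa=\cO_\frak p/\frak p$ therefore yields $\bar a_1\bar b_1=1$, $\bar a_2\bar b_2=\bar\rho_\frak p$ and $\bar a_1\bar b_2+\bar a_2\bar b_1=1$. Since $\bar a_1\bar b_1=1$ the class $\bar a_1$ is a unit, so setting $\bar c=\bar a_1\bar b_2$ and $\bar d=\bar a_2\bar b_1$ I obtain $\bar c+\bar d=1$ and $\bar c\bar d=(\bar a_1\bar b_1)(\bar a_2\bar b_2)=\bar\rho_\frak p$. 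Hence $\bar c,\bar d\in\kappa$ are roots of $T^2+T+\bar\rho_\frak p$ (using $-1=1$ in characteristic two), i.e. $\bar c^2+\bar c=\bar\rho_\frak p$.

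It then remains to see that $T^2+T+\bar\rho_\frak p$ has no root in $\kappa$, which is the desired contradiction. Here I use that $\Delta_\frak p=1-4\rho_\frak p$ generates the unramified quadratic extension: the polynomial $T^2+T+\rho_\frak p$ has unit discriminant $1-4\rho_\frak p=\Delta_\frak p$ and derivative $\equiv1\pmod{\frak p}$, so a root of its reduction would lift by Hensel's lemma to a root in $\cO_\frak p$, forcing $\sqrt{\Delta_\frak p}\in F_\frak p$ and contradicting $[F_\frak p(\sqrt{\Delta_\frak p}):F_\frak p]=2$. Thus $\bar\rho_\frak p$ cannot equal $\bar c^2+\bar c$, and no such $\sigma$ exists.

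The step I expect to be the real obstacle is identifying the correct invariant rather than the computation itself. A comparison of determinants or Hasse symbols of $FM$ and the target space is automatically consistent, since the isotropic space $FM$ already represents the anisotropic binary space $[1,-\Delta_\frak p]$; so no space-level invariant can detect the failure, and one is forced to work integrally and two-adically. The device that makes the obstruction visible is the reduction modulo $\frak p$ of the $\cO_\frak p$-valued form $Q$ together with its polar form $2B$: the two hypotheses are precisely what make the $N$-part totally singular after reduction, so that $\sigma$ descends to an isometry of the two reduced planes over $\kappa$, which the Artin--Schreier invariant $\bar\rho_\frak p$ then distinguishes.
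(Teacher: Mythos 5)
Your proof is correct, and its first half coincides with the paper's own argument: assume a representation exists, expand it in coordinates adapted to the splitting $M=2^{-1}A(0,0)\perp N$, and observe that the hypotheses $\frak n(N)\subseteq\frak p$ and $\frak s(N)\subseteq 2^{-1}\frak p$ force every contribution of $N$ to the three form equations into $\frak p$. Where you genuinely diverge is the mechanism of the contradiction. The paper never reduces to the residue field: it packages the three equations into the single discriminant identity $\Delta_\frak p=1-4\rho_\frak p=4\left(B(u,v)^2-Q(u)Q(v)\right)$, expands in the coordinates $\alpha_i,\beta_i,z_i$, completes a square to obtain $\Delta_\frak p\equiv\left(2B(z_1,z_2)+\alpha_1\beta_2-\alpha_2\beta_1\right)^2 \pmod{4\frak p}$, and then invokes the Local Square Theorem to conclude that $\Delta_\frak p$ would be a square. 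You instead reduce the three equations separately modulo $\frak p$, eliminate to reach the Artin--Schreier equation $\bar c^2+\bar c=\bar\rho_\frak p$ in $\kappa=\cO_\frak p/\frak p$, and rule it out by Hensel-lifting a putative root of $T^2+T+\rho_\frak p$ to manufacture $\sqrt{\Delta_\frak p}\in F_\frak p$. The two finishes are equivalent dyadic facts --- the identity $(1+2t)^2=1+4(t^2+t)$ translates ``units in $1+4\frak p$ are squares'' into ``simple roots of $T^2+T+\bar\rho_\frak p$ lift'' --- so neither is more general, but each has its own virtue: the paper's square-completion is a one-line algebraic manipulation relying only on O'Meara's 63:1, which is quoted repeatedly elsewhere in the paper, whereas your version makes the residue-field obstruction explicit ($\bar\rho_\frak p$ is not of the form $\bar c^2+\bar c$), which is precisely the invariant separating the reductions of $2^{-1}A(0,0)$ and $2^{-1}A(2,2\rho_\frak p)$ and makes transparent why both hypotheses on $N$ are exactly what is needed.
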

\begin{proof} Let $\{x, y\}$ be a basis for $ 2^{-1} A(0, 0) $ and $\{u, v\}$ be a basis for $ 2^{-1} A(2, 2\rho_\frak p) $. Suppose $M$ represents $ 2^{-1} A(2, 2\rho_\frak p) $. Then there are elements $\alpha_1, \beta_1$  and $\alpha_2, \beta_2$ in  $\mathcal{O}_{\frak p}^\times$ such that
$$ \begin{cases} u= \alpha_1 x+\beta_1 y + z_1 \\
v=\alpha_2 x+\beta_2 y +z_2 \end{cases} $$ with $z_1, z_2\in N$. Then
\begin{align*}
& \Delta_\frak p=1-4\rho_\frak p=4(B(u, v)^2-Q(u)Q(v))  \\
= \ &   (2B(z_1, z_2)+\alpha_1\beta_2+\alpha_2\beta_1)^2- 4\alpha_1\alpha_2\beta_1\beta_2 - 4 (\alpha_1\beta_1 Q(z_2)+\alpha_2\beta_2 Q(z_1)+Q(z_1)Q(z_2)) \\
= \ & (2B(z_1, z_2)+\alpha_1\beta_2-\alpha_2\beta_1)^2 + 8\alpha_2 \beta_1 B(z_1, z_2)- 4 (\alpha_1\beta_1 Q(z_2)+\alpha_2\beta_2 Q(z_1)+Q(z_1)Q(z_2)).
  \end{align*}
Since $\frak s(N) \subseteq 2^{-1} \frak p$, it follows that $2\alpha_2 \beta_1B(z_1, z_2)\in \frak p$. If $\frak n(N) \subseteq \frak p$, one concludes that $$\alpha_1\beta_1 Q(z_2)+\alpha_2\beta_2 Q(z_1)+Q(z_1)Q(z_2) \in \frak p . $$  This implies that $\Delta_\frak p$ is a square by the local square theorem, whence a contradiction. \end{proof}

Recall that the \textit{norm group}  of an $\cO_{\frak p}$-lattice $L$ is defined by $ \mathfrak{g}(L)=Q(L)+2\mathfrak{s}(L) $. We can now prove our main results in this section.

\begin{prop} \label{4.5} An integral quaternary $\mathcal{O}_{\frak p}$-lattice $M$ is $2$-universal if and only if $$M\cong 2^{-1}A(0, 0) \perp 2^{-1}A(0, 0) . $$
\end{prop}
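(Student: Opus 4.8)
The plan is to prove the two implications separately, and I expect the sufficiency to be the short half while the necessity contains the real work.

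For sufficiency I would prove the stronger statement that $M_0:=2^{-1}A(0,0)\perp 2^{-1}A(0,0)$ represents \emph{every} integral binary $\cO_\frak p$-lattice, which by Corollary \ref{4.3} is more than enough. Write $M_0=(\cO_\frak p x_1+\cO_\frak p y_1)\perp(\cO_\frak p x_2+\cO_\frak p y_2)$ with $Q(x_i)=Q(y_i)=0$ and $B(x_i,y_i)=2^{-1}$. Given an integral binary lattice $K=\cO_\frak p u+\cO_\frak p v$ with $Q(u)=q_1$, $Q(v)=q_2\in\cO_\frak p$ and $B(u,v)=b$, note that integrality forces $2b\in\mathfrak{n}(K)\subseteq\cO_\frak p$. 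I would then send $u\mapsto x_1+q_1 y_1$ and $v\mapsto 2b\,y_1+x_2+q_2 y_2$; a one-line Gram computation gives $Q(x_1+q_1y_1)=q_1$, $Q(2b\,y_1+x_2+q_2y_2)=q_2$ and $B(x_1+q_1y_1,\,2b\,y_1+x_2+q_2y_2)=2b\cdot 2^{-1}=b$, so this is an isometric embedding $K\hookrightarrow M_0$. Since it works for arbitrary integral binary $K$, $M_0$ is $2$-universal. Notice this construction uses both copies essentially: one plane supplies the norm $q_1$ while pairing with $y_1$, and the other absorbs $2b$ through $y_1$ and realizes $q_2$ freely.

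For necessity, suppose $M$ is integral quaternary and $2$-universal. Then $F_\frak p M$ is a $2$-universal quadratic space of dimension $4$, so $F_\frak p M\cong\Bbb H\perp\Bbb H$ by Theorem \ref{2.1}(3). Since $M$ is $2$-universal it represents the Type IV lattice $2^{-1}A(0,0)$ (Proposition \ref{4.2} and Corollary \ref{4.3}); let its image $M'\cong 2^{-1}A(0,0)$ be spanned by isotropic vectors $x,y$ with $B(x,y)=2^{-1}$. Because $M$ is integral, $\mathfrak{s}(M)\subseteq 2^{-1}\cO_\frak p$, so for every $m\in M$ we have $B(m,x),B(m,y)\in 2^{-1}\cO_\frak p$. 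Writing $m=\alpha x+\beta y+w$ with $w$ in the orthogonal complement of $M'$ gives $B(m,x)=2^{-1}\beta$ and $B(m,y)=2^{-1}\alpha$, which forces $\alpha,\beta\in\cO_\frak p$. Hence $\alpha x+\beta y\in M'$, and $M=M'\perp N=2^{-1}A(0,0)\perp N$ with $N$ a binary integral lattice on the complementary hyperbolic plane.

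It remains to identify $N$, and this is where the main obstacle lies. Applying Lemma \ref{4.4}: if $\mathfrak{n}(N)\subseteq\frak p$ then $M$ cannot represent $2^{-1}A(2,2\rho_\frak p)$, contradicting $2$-universality, so $\mathfrak{n}(N)=\cO_\frak p$. The delicate point is to rule out $N$ being a \emph{proper} sublattice of the maximal lattice $2^{-1}A(0,0)$ even with unit norm (for instance a unimodular $\langle\epsilon,-\epsilon\rangle$, which also lies on $\Bbb H$ and has norm $\cO_\frak p$ but scale $\cO_\frak p\neq 2^{-1}\cO_\frak p$). Here I would invoke that $M$ must still represent the remaining Type III lattice $2^{-1}\pi_\frak p A(2,2\rho_\frak p)$, whose determinant $-\pi_\frak p^2\Delta_\frak p/4$ has odd $\frak p$-valuation. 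Pulling an embedding back as $u=p_1+z_1$, $v=p_2+z_2$ with $p_i\in 2^{-1}A(0,0)$ and $z_i\in N$, and running a discriminant computation in the spirit of Lemma \ref{4.4}, I expect that a proper $N$ has insufficient off-diagonal capacity to realize the prescribed inner product simultaneously with the odd-valuation norms $Q(u)=\pi_\frak p$ and $Q(v)=\rho_\frak p\pi_\frak p$; concretely, the shortfall should collapse into a relation forcing $\Delta_\frak p=1-4\rho_\frak p$ to be a square, which is impossible by the local square theorem. The heart of the difficulty is precisely this solvability analysis over $\cO_\frak p$ — the discriminant identity alone is consistent, so the contradiction must come from a quadratic-defect obstruction. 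Once $\mathfrak{n}(N)=\cO_\frak p$ and $N$ is forced to be maximal, uniqueness of maximal lattices on a fixed space (\cite[91:2.Theorem]{omeara_quadratic_1963}) yields $N\cong 2^{-1}A(0,0)$, and therefore $M\cong 2^{-1}A(0,0)\perp 2^{-1}A(0,0)$.
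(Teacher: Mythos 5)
Your sufficiency argument is correct and is in fact more elementary than the paper's: the explicit map $u\mapsto x_1+q_1y_1$, $v\mapsto 2b\,y_1+x_2+q_2y_2$ does preserve the Gram matrix (the essential point being that integrality of $K$ forces $2b\in\cO_{\frak p}$), so it exhibits every integral binary lattice inside $2^{-1}A(0,0)\perp 2^{-1}A(0,0)$ directly, whereas the paper goes through Corollary \ref{4.3} and Riehm's representation theorem \cite[Theorem 7.2]{riehm_integral_1964} with norm-group computations. The first part of your necessity argument is also sound: your hand computation splitting $M=2^{-1}A(0,0)\perp N$ is the standard proof of \cite[82:15]{omeara_quadratic_1963} in this situation, and the deduction $\frak n(N)=\cO_{\frak p}$ from Lemma \ref{4.4} is legitimate, since the scale hypothesis $\frak s(N)\subseteq 2^{-1}\frak p$ of that lemma holds automatically once $\frak n(N)\subseteq\frak p$ (because $2\frak s(N)\subseteq \frak n(N)$). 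One can even dispose of your worry about $N$ having scale exactly $2^{-1}\cO_{\frak p}$: such an integral $N$ is $2^{-1}\cO_{\frak p}$-modular with $\frak n(N)=2\frak s(N)$, and on $\Bbb H$ this already forces $N\cong 2^{-1}A(0,0)$; so the only case genuinely at issue is $\frak s(N)\subseteq 2^{-1}\frak p$ with $\frak n(N)=\cO_{\frak p}$.

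That remaining case, however, is a genuine gap in your proposal, and it is precisely the hard core of the paper's proof. You choose the right test lattice $2^{-1}\pi_{\frak p}A(2,2\rho_{\frak p})$, but your plan of ``a discriminant computation in the spirit of Lemma \ref{4.4}'' does not go through as stated: the mechanism of Lemma \ref{4.4} rests on $\frak n(N)\subseteq\frak p$ forcing the hyperbolic-plane coefficients of the embedded vectors to be units, whereas here $\frak n(N)=\cO_{\frak p}$, so the components $z_i\in N$ can themselves carry unit norms, the coefficients along $2^{-1}A(0,0)$ need not be units, and the discriminant identity admits solutions, exactly as you concede (``the discriminant identity alone is consistent''). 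The paper closes this gap with a re-splitting maneuver that your sketch is missing: writing $y_1=ax_1+bx_2+cz_1+dz_2$ for the first basis vector of $2^{-1}\pi_{\frak p}A(2,2\rho_{\frak p})$ (so $Q(y_1)=\pi_{\frak p}$), a valuation argument on $\pi_{\frak p}=ab+\cdots$ shows $a$ or $b$ is a unit; hence $M=(\cO_{\frak p}y_1+\cO_{\frak p}x_2)\perp N'$ is a new splitting with $(\cO_{\frak p}y_1+\cO_{\frak p}x_2)\cong 2^{-1}A(0,0)$ and $N'\cong N$; then one shows that in the expansion of $y_2$ the $x_2$-coefficient and the coefficient of the unit-norm vector of $N'$ both lie in $\frak p$, so the whole representation lands in $(\cO_{\frak p}y_1+\frak p x_2)\perp(\frak p z_1'+\cO_{\frak p}z_2')$, whose second summand has norm $\frak p^2$; scaling by $\pi_{\frak p}^{-1}$ then puts one back in the situation of Lemma \ref{4.4} and yields the quadratic-defect contradiction you anticipated. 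Without this (or an equivalent substitute), your necessity proof stops exactly at the point you yourself flag as the heart of the difficulty.
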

\begin{proof}  \underline{Sufficiency}. By Corollary \ref{4.3}, one only needs to verify that $2^{-1}A(0, 0) \perp 2^{-1}A(0, 0)$ represents all the lattices listed in Proposition \ref{4.2}.  Since $2^{-1}A(0, 0)$ is universal, the lattices in Type II or IV are represented by $M$.  Let $L$ be a lattice in Type I or III. In order to apply \cite[Theorem 7.2]{riehm_integral_1964}, we scale $M$ and $L$ by 2 and call the resulting lattices $M'$ and $L'$ respectively.

If $L$ is in Type I, then we have
$$ \frak g(L')=\frak n(L')= \frak g (M')= \frak n(M')= 2 \mathcal{O}_{\frak p}$$  by \cite[Lemma 7.1]{riehm_integral_1964} and Proposition \ref{4.2}. Since ${L'}^{\sharp}$ is isomorphic to $$ 2^{-1}\pi_\frak p^{i} A(\pi_{\frak p}^i, \sigma \pi_\frak p^{i+1})  \ \ \ \text{or} \ \ \ 2^{-1}(1-4\rho_\frak p \sigma^{-1} \pi_\frak p^{-2i-1})\pi_\frak p^{i} A(\pi_{\frak p}^i, \sigma \pi_\frak p^{i+1}) $$ with $0\leq i\leq e-1$ and $$\frak n({L'}^{\sharp})= \frak p^{2i-e} \supset 2 \mathcal{O}_{\frak p}=\frak n(M') , $$ one concludes that $M'$ represents $L'$ by \cite[Theorem 7.2]{riehm_integral_1964} and Theorem \ref{2.1}.

If $L$ is in Type III, one only needs to consider the case $L\cong 2^{-1}\pi_\frak p A(2, 2\rho_\frak p)$ since
$$ 2^{-1} A(2, 2\rho_\frak p) \perp 2^{-1} A(2, 2\rho_\frak p) \cong 2^{-1} A(0, 0) \perp 2^{-1} A(0, 0) . $$
Since the norm group of $L'$ satisfies
$$ \frak g(L')= \frak n(L')= \frak p^{e+1} \subset \frak g(M')=\frak n(M')= 2 \mathcal{O}_{\frak p}$$ by \cite[Lemma 7.1]{riehm_integral_1964} and Proposition \ref{4.2} and
$${L'}^{\sharp}\cong \pi_\frak p^{-1} A(2, 2\rho_\frak p) \ \ \ \text{with} \ \ \ \frak n({L'}^\sharp)=\frak p^{e-1}\supset 2 \mathcal{O}_{\frak p}=\frak n(M') , $$ one concludes that $M'$ represents $L'$ by \cite[Theorem 7.2]{riehm_integral_1964} and Theorem \ref{2.1}.

\medskip

\underline{Necessity}. Suppose $M$ is a quaternary $2$-universal integral  lattice with a Jordan splitting $$M=M_1\perp \cdots \perp M_t . $$
Since $\frak n(M)\subseteq \mathcal{O}_{\frak p}$, one has $\frak s(M)=\frak s(M_1) \subseteq 2^{-1}\mathcal{O}_{\frak p}$. On the other hand, $M$ represents $2^{-1}A(0, 0)$. It follows that $$\frak s(M)=\frak s(M_1) =2^{-1}\mathcal{O}_{\frak p} \ \ \ \text{with} \ \  \ \rank (M_1)\geq 2 . $$ Therefore $\frak n(M_1)=2\frak s(M_1)$ and $\rank (M_1)=2$ or $4$. Since $F_\frak p M\cong \Bbb H\perp \Bbb H$ by Theorem \ref{2.1}, one concludes that $$M\cong 2^{-1} A(0, 0) \perp 2^{-1} A(0, 0)$$ for $M=M_1$ by \cite[93:18.Example (vi)]{omeara_quadratic_1963}.

Now we only need to consider the case $\rank (M_1)=2$. Since $M$ represents $2^{-1}A(0, 0)$, we can assume that $M_1 \cong 2^{-1}A(0, 0)$ in a basis  $\{x_1, x_2\}$ by \cite[82:15]{omeara_quadratic_1963}. Since $M$ also represents $2^{-1} A(2, 2\rho_\frak p)$, we have $$ \frak n(M_2\perp \cdots \perp M_t) = \mathcal{O}_{\frak p} $$ by Lemma \ref{4.4}.
Since $F_\frak p(M_2\perp \cdots \perp M_t) \cong \Bbb H$, one can write
\begin{equation} \label{complement} M_2\perp \cdots \perp M_t = \begin{cases} \pi_\frak p^{-i} A(\epsilon \pi_\frak p^i, 0) \ \ \ & \text{for $t=2$} \\
\langle \epsilon \rangle \perp \langle \eta \pi_\frak p^r\rangle \ \ \ & \text{for $t=3$} \end{cases}  \end{equation}
in some basis $\{ z_1, z_2\} $, where $\epsilon$ and $\eta$ are in $\mathcal{O}_{\frak p}^\times$, $r\geq 2$ and $0\leq i\leq e-1$. Let  $\{y_1, y_2\}$ be a basis for $ 2^{-1} \pi_\frak p A(2, 2\rho_\frak p) $. Since $2^{-1}\pi_\frak p A(2, 2\rho_\frak p) \longrightarrow M$, there are elements $a, b, c, d$ in  $\mathcal{O}_{\frak p}$ such that
$$  y_1= a x_1+b x_2 + cz_1+dz_2 .$$
We claim that $a$ or $b$ is in  $\mathcal{O}_{\frak p}^\times$.  Suppose that both $a$ and $b$ lie in $\frak p$. Then
\begin{equation} \label{val} \pi_\frak p=ab+ \begin{cases}  \epsilon c^2+ 2\pi_\frak p^{-i} cd \ \ \ & \text{for $t=2$} \\
 \epsilon c^2 + \eta \pi_\frak p^r d^2 \ \ \ & \text{for $t=3$.} \end{cases} \end{equation}
This implies $c\in \frak p$, and by comparing $\frak p$-adic valuations we can find a contradiction to (\ref{val}).

Without loss of generality, we assume $a\in \mathcal{O}_{\frak p}^\times$.  Then $M=(\mathcal{O}_{\frak p} y_1 +\mathcal{O}_{\frak p} x_2 )\perp N' $ with $$(\mathcal{O}_{\frak p} y_1 +\mathcal{O}_{\frak p} x_2 )\cong 2^{-1} A(0, 0) \ \ \ \text{and} \ \ \ N'\cong M_2\perp \cdots \perp M_t$$ by \cite[82:15, 93:11.Example and 93:14.Theorem]{omeara_quadratic_1963}. Let $\{z_1', z_2'\}$ be the basis of $N'$ corresponding to the above isometry.
Since $ 2^{-1}\pi_\frak p A(2, 2\rho_\frak p) \longrightarrow M$,  there are elements $\alpha, \beta, \gamma, \delta$ in  $\mathcal{O}_{\frak p}$ such that
$y_2= \alpha y_1+\beta x_2 + \gamma z_1'+\delta z_2'$. Then
$$ B(y_1, y_2)= \alpha Q(y_1) + \beta B(y_1, x_2) + \gamma B(y_1, z_1')+\delta B(y_1, z_2') \in 2^{-1} \frak p . $$
Since $\frak s(N') \subseteq 2^{-1} \frak p$, both $B(y_1, z_1')$ and $B(y_1, z_2')$ are in $2^{-1} \frak p$. Therefore $\beta B(y_1, x_2)\in 2^{-1} \frak p$. Since $B(y_1, x_2)\mathcal{O}_{\frak p}=2^{-1} \mathcal{O}_{\frak p}$, one has $\beta\in \frak p$. This implies that $$2^{-1}\pi_\frak p A(2, 2\rho_\frak p) \longrightarrow (\mathcal{O}_{\frak p} y_1+\frak p x_2 )\perp N' . $$
Since $\frak n(N') = \mathcal{O}_{\frak p}$, one obtains
$$2^{-1}\pi_\frak p A(2, 2\rho_\frak p) \longrightarrow (\mathcal{O}_{\frak p} y_1+\frak p x_2 )\perp (\frak p z_1'+\mathcal{O}_{\frak p} z_2') $$ by inspecting the representation of $y_2$. Moreover one has $ \frak n (\frak p z_1'+\mathcal{O}_{\frak p} z_2') = \frak p^2 $ by (\ref{complement}).  A contradiction is derived by applying Lemma \ref{4.4} with scaling $\pi_\frak p^{-1}$.
\end{proof}

Since $2$ is not invertible in $\mathcal{O}_{\mathfrak{p}}$ in the dyadic case, the classic integral lattices are quite different from the usual ones.

\begin{prop} \label{4.6} There is no classic $2$-universal quaternary lattice over any dyadic local field.
\end{prop}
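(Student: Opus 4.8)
The plan is to argue by contradiction: assume $M$ is a classic $2$-universal quaternary $\mathcal{O}_{\frak p}$-lattice and squeeze out an impossible isometry. First I would pin down the underlying space and the basic invariants. Since every binary quadratic space over $F_\frak p$ contains a classic integral lattice (scale any lattice on it into $\mathcal{O}_{\frak p}$), the space $F_\frak p M$ must represent all binary spaces, hence is a $2$-universal space of dimension $4$; by Theorem \ref{2.1} this forces $F_\frak p M \cong \Bbb H \perp \Bbb H$. Because $M$ represents the classic integral lattice $\langle 1, -1\rangle$, it represents a unit, so $\frak n(M)=\mathcal{O}_{\frak p}$, and classic integrality gives $\frak s(M)\subseteq \mathcal{O}_{\frak p}$; combined with $2\frak s(M)\subseteq \frak n(M)\subseteq \frak s(M)$ this yields $\frak s(M)=\frak n(M)=\mathcal{O}_{\frak p}$. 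Hence in a Jordan splitting $M=M_1\perp\cdots\perp M_t$ the leading component $M_1$ is unimodular of norm $\mathcal{O}_{\frak p}$, i.e.\ of odd type.

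The engine of the argument is the rigidity of unimodular sublattices: if a unimodular binary lattice $L$ is represented by $M$, then by \cite[82:15]{omeara_quadratic_1963} its image splits off as an orthogonal direct summand $M\cong L\perp L^{\flat}$ with $L^{\flat}$ of rank $2$, and Witt cancellation (\cite[42:16.Theorem]{omeara_quadratic_1963}) pins down the space of $L^{\flat}$ as the orthogonal complement of $F_\frak p L$ inside $\Bbb H\perp\Bbb H$. I would first set up a classic analogue of Corollary \ref{4.3}: a classic integral lattice is classic $2$-universal if and only if it represents the classic-maximal lattice on every binary space. These classic-maximal lattices are the classic integral members of Proposition \ref{4.2} together with the corrected maximal lattices on the remaining spaces (they differ from the list of Proposition \ref{4.2} precisely on those spaces where the $\mathcal{O}_{\frak p}$-maximal lattice has scale $2^{-1}\mathcal{O}_{\frak p}$, such as $2^{-1}A(0,0)$ and the Type I lattices with $i\geq 1$). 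The crucial members are the even unimodular lattice $A(2,2\rho_\frak p)$, of determinant $-\Delta_\frak p$, and the odd unimodular lattices $\langle 1,-1\rangle$ and $\langle 1,-\Delta_\frak p\rangle$.

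From here the plan is a case analysis on $\rank(M_1)\in\{2,3,4\}$. Representing $A(2,2\rho_\frak p)$ forces an even unimodular binary summand of scale $\mathcal{O}_{\frak p}$, whereas $M_1$ is odd; I would exploit this tension together with the determinant $-\Delta_\frak p$ and the Hilbert-symbol identity of Lemma \ref{4.1} to show that the two orthogonal splittings of $M$ arising from an even and from an odd unimodular summand of matching determinant cannot coexist inside a single rank-$4$ classic integral lattice. Witt cancellation identifies the complementary spaces, and the norm-group bookkeeping of Lemma \ref{4.4} then forces $\Delta_\frak p$ to be a square, contradicting the Local Square Theorem. When $\rank(M_1)=2$ the unimodular binary summands of $M$ are tightly constrained (they share the invariants of $M_1$), so $M$ cannot carry the full list of non-isometric classic unimodular binary lattices at all; the ranks $3$ and $4$ are handled by splitting off the forced summand and reducing to Lemma \ref{4.4} applied to the $\frak p$-modular tail.

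The hard part will be the second step and the core of the third: producing the correct classic analogue of the maximal-lattice classification over a \emph{ramified} dyadic field, and isolating the precise short list of classic-maximal binary lattices that a quaternary classic integral $M$ provably cannot represent simultaneously. The even/odd dichotomy for unimodular lattices and the quadratic-defect bookkeeping are delicate here. Unlike the integral case of Proposition \ref{4.5}, where the target is the single lattice $2^{-1}A(0,0)\perp 2^{-1}A(0,0)$ of scale $2^{-1}\mathcal{O}_{\frak p}$, the classic constraint $\frak s(M)=\mathcal{O}_{\frak p}$ removes exactly the scale-$2^{-1}$ room that made $2$-universality possible, so the contradiction must be extracted from the interaction of several classic-maximal lattices rather than from a single one.
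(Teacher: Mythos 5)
Your opening reductions are sound and essentially agree with the paper's starting point: $F_\frak p M\cong \Bbb H\perp\Bbb H$, $\frak s(M)=\frak n(M)=\mathcal O_\frak p$, and the leading Jordan component odd unimodular. But the declared engine of your argument is a false statement. You propose to show that an even unimodular binary summand of determinant $-\Delta_\frak p$ and an odd unimodular binary summand of the same determinant ``cannot coexist inside a single rank-$4$ classic integral lattice'' on $\Bbb H\perp\Bbb H$. This fails whenever $e=\ord_\frak p(2)$ is even: in that case $2$ is a norm from the unramified extension $F_\frak p(\sqrt{\Delta_\frak p})$ (norms are exactly the elements of even order), so the underlying space of $A(2,2\rho_\frak p)$ is $2[1,-\Delta_\frak p]\cong[1,-\Delta_\frak p]$, and the lattice $M_0=A(2,2\rho_\frak p)\perp\langle 1,-\Delta_\frak p\rangle$ is a classic integral quaternary lattice on $[1,-\Delta_\frak p]\perp[1,-\Delta_\frak p]\cong\Bbb H\perp\Bbb H$ that is split simultaneously by an even and by an odd unimodular binary lattice of determinant $-\Delta_\frak p$. (Your claim is true when $e$ is odd, since then $2[1,-\Delta_\frak p]$ represents no units and hence carries no odd unimodular lattice; but a proof of Proposition \ref{4.6} cannot restrict to odd $e$.) Since every classic $2$-universal lattice would admit both splittings, your strategy amounts to proving exactly this false coexistence statement, so no amount of bookkeeping with Lemma \ref{4.1}, Witt cancellation and Lemma \ref{4.4} can close it; the contradiction has to come from representing \emph{more} lattices than $A(2,2\rho_\frak p)$, $\langle 1,-1\rangle$ and $\langle 1,-\Delta_\frak p\rangle$.

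Two further gaps. First, your ``classic analogue of Corollary \ref{4.3}'' tacitly assumes one classic-maximal lattice per binary space; this uniqueness fails (both $A(0,0)$ and $A(1,0)$ are classic-maximal on $\Bbb H$, one even and one odd, hence non-isometric), and in any case such a classification is not needed: the paper never compiles it, it plays off a handful of specific classic lattices. Second, Lemma \ref{4.4} can never be applied to a classic lattice as it stands, because $\frak s\bigl(2^{-1}A(0,0)\bigr)=2^{-1}\mathcal O_\frak p\not\subseteq\mathcal O_\frak p$; it must be invoked after a scaling and only once the relevant sublattice structure has been pinned down exactly, and that pinning-down is where the real work lies and where your outline is silent. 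Concretely, the paper first forces $\rank(M_1)\ge 3$ from the even/odd uniqueness of Jordan invariants (\cite[91:9.Theorem]{omeara_quadratic_1963}); in the rank-$3$ case it uses the representation of the \emph{rank-one} lattice $\langle\epsilon(1+\pi_\frak p)\rangle$ and the quadratic-defect bound \cite[63:5]{omeara_quadratic_1963} when $e>1$, and a basis argument plus Lemma \ref{4.4} scaled by $2$ when $e=1$; in the unimodular rank-$4$ case it identifies $M\cong A(1,0)\perp A(\pi_\frak p,0)$ via the weight and \cite[93:18.Example (vi)]{omeara_quadratic_1963}, then argues separately for $e>1$ (showing $1=B(u,v)\in\frak p$) and $e=1$ (Lemma \ref{4.4} again). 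Note that the paper's unavoidable case split is $e=1$ versus $e>1$, not the parity split that your mechanism would force on you.
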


\begin{proof}  Suppose that $M$ is a classic 2-universal quaternary lattice with a Jordan splitting $$M=M_1\perp \cdots \perp M_t . $$ Since $M$ represents all binary unimodular lattices, both proper and improper binary unimodular lattices split $M$ by \cite[82:15]{omeara_quadratic_1963}. This implies that $\rank (M_1)\geq 3$ by \cite[91:9.Theorem]{omeara_quadratic_1963}. Since $M$ represents $A(0,0)$, one can assume that $A(0,0)$ splits $M_1$ by \cite[82:15]{omeara_quadratic_1963}.

\medskip

If $\rank (M_1)= 3$, one can write $$M_1=(\mathcal{O}_{\frak p} x+\mathcal{O}_{\frak p}y)\perp \mathcal{O}_{\frak p} z $$ with $Q(x)=Q(y)=0$, $B(x,y)=1$ and $Q(z)=\epsilon \in \mathcal{O}_{\frak p}^\times$. Since $M$ is classic 2-universal, one obtains $F_\frak p M\cong \Bbb H \perp \Bbb H$ from Theorem \ref{2.1}. This implies that $\frak s (M_2) \subseteq \frak p^2$. Moreover, by the condition $ \langle \epsilon(1+\pi_\frak p)\rangle \longrightarrow M $, there are elements $a, b, c\in \mathcal{O}_{\frak p}$ and $w\in M_2$ such that
$$ \epsilon(1+\pi_\frak p)=Q(ax+by+cz+w)=2ab + \epsilon c^2+ Q(w) .$$ Since $Q(w)\in \frak p^2$, one concludes that $c\in \mathcal{O}_{\frak p}^\times$ and
$$ \frak d(1+\pi_\frak p)=\frak d(c^2+2ab \epsilon^{-1} + Q(w)\epsilon^{-1}) \subseteq \frak p^2 $$ if $e>1$. But this contradicts  \cite[63:5]{omeara_quadratic_1963}.
For $e=1$, we consider a basis  $\{u, v\}$ of  $A(2, 2\rho_\frak p)$. By the representability of this lattice by $M$, one can find elements $\alpha_1, \beta_1, \gamma_1$ and $\alpha_2, \beta_2, \gamma_2$ in  $\mathcal{O}_{\frak p}$ such that
$$ \begin{cases} u= \alpha_1x+\beta_1 y + \gamma_1 z+ w_1 \\
v=\alpha_2 x+\beta_2 y +\gamma_2 z+ w_2 \end{cases} $$ for some $w_1, w_2\in M_2$. Therefore
$$ 2= 2\alpha_1\beta_1+ \gamma_1^2 \epsilon +Q(w_1) \ \ \ \text{and} \ \ \ 2\rho_\frak p= 2\alpha_2\beta_2 + \epsilon \gamma_2^2 +Q(w_2) .$$
This implies that $\gamma_1, \gamma_2 \in \frak p$. One concludes that $$ A(2, 2\rho_\frak p) \longrightarrow (\mathcal{O}_{\frak p} x+\mathcal{O}_{\frak p}y)\perp \frak p z
\perp M_2 . $$
A contradiction is derived by applying Lemma \ref{4.4} with scaling 2.

\medskip

Otherwise $M=M_1$. Since $ A(1,0) \longrightarrow M $ and $A(\pi_\frak p, 0) \longrightarrow M $, the weight  $\frak w(M)$ of $M$ is equal to $\frak p$ by \cite[82:15 and 93:5.Example]{omeara_quadratic_1963}. Therefore $$M\cong A(1,0)\perp A(\pi_\frak p, 0) \ \ \ \text{in some basis} \ \ \ \{w, x, y, z\} $$ by \cite[93:18.Example (vi)]{omeara_quadratic_1963}. Since the lattice $A(2, 2\rho_\frak p)$ with basis $\{u, v\}$ is represented by $M$, there exist $\alpha_1, \beta_1, \gamma_1, \delta_1$ and $\alpha_2, \beta_2, \gamma_2, \delta_2$ in $\mathcal{O}_{\frak p}$ such that
$$ \begin{cases} u= \alpha_1 w+\beta_1 x + \gamma_1 y+ \delta_1 z \\
v=\alpha_2 w+ \beta_2 x+\gamma_2 y +\delta_2 z  .\end{cases} $$
This implies that
$$ 2=\alpha_1^2 + 2\alpha_1\beta_1+\pi_\frak p \gamma_1^2+ 2\gamma_1\delta_1 \ \ \ \text{and} \ \ \ 2\rho_\frak p=\alpha_2^2 +2\alpha_2\beta_2 + \pi_\frak p \gamma_2^2 + 2\gamma_2\delta_2 $$ and one obtains that $\alpha_1, \alpha_2$ are both in $\frak p$.

For $e>1$, one further has $ \gamma_1, \gamma_2\in \frak p$. Therefore
$$1= B(u,v)=\alpha_1\beta_2+\beta_1\alpha_2+\gamma_1\delta_2+\delta_1\gamma_2 \in \frak p $$ which is a contradiction.

For $e=1$, one has $A(\pi_\frak p, 0)\cong A(0,0)$ by \cite[93:11.Example]{omeara_quadratic_1963}. Then
$$A(2, 2\rho_\frak p) \longrightarrow (\frak p w+ \mathcal{O}_{\frak p} x) \perp (\mathcal{O}_{\frak p} y +\mathcal{O}_{\frak p} z) \cong A(0, 0) \perp \pi_\frak p A(\pi_\frak p, 0) .$$
This contradicts Lemma \ref{4.4} with scaling $2$. \end{proof}

\section{Existence of 2-LNG lattices}\label{sec5}

In next two sections, we apply the results in the previous sections to study existence of (classic) $k$-LNG lattices (Definition\;\ref{1.3} (4)) over a number field.
We keep the same notations as in the previous sections.
By Corollary \ref{2.3}, there may exist $k$-LNG lattices over a number field only for $k=1$ or $2$. In this section, we treat the case $k=2$.
Our main result in this section is the following.

\begin{thm}\label{5.1}
	Let $ F $ be a number field. Then there is a $2$-LNG lattice over $F$ if and only if the class number of $F$ is even. Moreover, all $2$-LNG lattices are in the genus of $2^{-1}A(0, 0)\perp 2^{-1}A(0, 0)$.
\end{thm}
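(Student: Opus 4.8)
The plan is to prove the two directions of the equivalence and then establish the genus statement, using as the local backbone the classification from Proposition \ref{4.5}, which tells us that the \emph{only} integral $2$-universal quaternary lattice over a dyadic local field is $2^{-1}A(0,0)\perp 2^{-1}A(0,0)$, while over non-dyadic and archimedean primes the local $2$-universal conditions come from Proposition \ref{3.3} and Theorem \ref{2.1}. First I would observe that any $2$-LNG lattice $L$ must be locally $2$-universal everywhere, so its localizations are constrained at every prime. At each dyadic prime the localization must be isometric to $2^{-1}A(0,0)\perp 2^{-1}A(0,0)$ by Proposition \ref{4.5}, forcing the rank of $L$ to be $4$ and the space $FL$ to be everywhere locally split, hence $FL\cong \Bbb H\perp\Bbb H$ globally by Hasse--Minkowski. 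A rank-$4$ lattice whose space is $\Bbb H\perp\Bbb H$ and which is locally $2$-universal at \emph{every} prime then has its localizations pinned down at each $\frak p$ (dyadic, non-dyadic isotropic, and real/complex), and a genus-invariant computation should show that all such lattices lie in a single genus, namely $\gen(2^{-1}A(0,0)\perp 2^{-1}A(0,0))$. This already yields the ``Moreover'' clause once the existence/non-existence dichotomy is settled.

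Next I would connect the $2$-LNG property to the class number via spinor genus theory, following the strategy of \cite{xu_indefinite_2020} for the $k=1$ case. The key point is that a lattice $L$ in the genus of $2^{-1}A(0,0)\perp 2^{-1}A(0,0)$ is indefinite $2$-universal if and only if it actually represents every integral binary lattice, and by the theory of representations of indefinite lattices (strong approximation for spin groups, as in \cite{hsia_indefinite_1998}) representation in the \emph{genus} is automatic; the only obstruction is whether a given binary target is represented by $L$ itself rather than merely by some lattice in its genus. Thus $L$ fails to be $2$-universal exactly when the genus of $L$ splits into more than one spinor genus relative to some binary representation problem, and this defect is governed by the quotient measuring how far $O^+_{\Bbb A}(FL)$ is from being covered by the product of the spinor norm groups $\theta_\frak p(O^+(L_\frak p))$ together with $F^\times$. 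The plan is to show that this obstruction group is nontrivial precisely when the class number of $F$ is even, by computing the local spinor norms $\theta_\frak p(O^+(L_\frak p))$ for $L_\frak p = 2^{-1}A(0,0)\perp 2^{-1}A(0,0)$ (these are full, equal to $F_\frak p^\times$, at every finite prime since the lattice is a sum of hyperbolic planes) and matching the resulting idele-class obstruction with the $2$-torsion of the narrow or ordinary class group.

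For the forward direction (even class number implies existence of a $2$-LNG lattice), I would exhibit a concrete lattice in $\gen(2^{-1}A(0,0)\perp 2^{-1}A(0,0))$ that fails to represent a specific binary lattice by a class-group obstruction: when the class number is even there is an ideal class of order $2$, and one builds a target binary lattice (or modifies $L$ along a non-principal ideal) so that representability is obstructed by that class, exactly paralleling the construction of a $1$-LNG lattice in \cite{xu_indefinite_2020}. Conversely, if the class number is odd, the spinor-norm obstruction group is trivial, so every lattice in the genus represents all binary lattices that are locally represented, making every locally $2$-universal lattice genuinely $2$-universal and ruling out $2$-LNG lattices. The hard part will be the spinor-genus obstruction computation in the middle step: one must verify that the local spinor norm groups of $2^{-1}A(0,0)\perp 2^{-1}A(0,0)$ are full at all finite primes (including the dyadic ones, where spinor norm computations are genuinely delicate, cf.\ \cite{hsia_spinor_1975} and Lemma \ref{4.1}), and then identify the global obstruction cokernel with the even-ness of the class number via class field theory. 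Once that identification is in hand, both directions and the genus statement follow uniformly.
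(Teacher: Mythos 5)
Your proposal follows the same general route as the paper---spinor genus theory for $\gen(2^{-1}A(0,0)\perp 2^{-1}A(0,0))$, the representation theorem of \cite[Theorem 4.1]{hsia_indefinite_1998}, and a construction parallel to the $1$-LNG case of \cite{xu_indefinite_2020}---but it hinges on a spinor norm computation that is false. You claim $\theta_\frak p(O^+(L_\frak p))=F_\frak p^\times$ at every finite prime for $L_\frak p=2^{-1}A(0,0)\perp 2^{-1}A(0,0)$ ``since the lattice is a sum of hyperbolic planes.'' The rotations of the \emph{space} $\Bbb H\perp\Bbb H$ do have full spinor norms, but the rotations stabilizing the \emph{lattice} do not: a rotation such as $x\mapsto \pi_\frak p x$, $y\mapsto \pi_\frak p^{-1}y$ on a hyperbolic pair has spinor norm $\pi_\frak p$ and fails to stabilize the lattice. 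The correct group is $\theta_\frak p(O^+(L_\frak p))=\mathcal{O}_\frak p^\times(F_\frak p^\times)^2$, by \cite[92:5]{omeara_quadratic_1963} at non-dyadic primes and \cite[Lemma 1]{hsia_spinor_1975} at dyadic ones; this is exactly what the paper uses. The error is fatal, not cosmetic: since $\theta_\frak p(X(M_\frak p/K_\frak p))\supseteq \theta_\frak p(O^+(M_\frak p))$ for any sublattice $K_\frak p\subseteq M_\frak p$, fullness at all finite primes would force the genus to consist of a single class and all relative spinor norm groups to be full, so by \cite[Theorem 4.1]{hsia_indefinite_1998} no binary lattice could ever be a spinor exception---that is, your own computation would prove that $2$-LNG lattices \emph{never} exist, contradicting the statement being proved (and the explicit example over $\Bbb Q(\sqrt{-5})$ in the paper). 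With the correct computation the genus has $[\pic(\mathcal{O}_F):\pic(\mathcal{O}_F)^2]$ classes, and the paper's sufficiency proof must then do the work you defer: taking an unramified quadratic extension $E=F(\sqrt c)$ (available exactly when the class number is even), it builds a binary lattice $L$ on the space $[1,-c]$ with $L_\frak p\cong 2^{-1}A(2,2\rho_\frak p)$ at dyadic primes inert in $E$, and proves the \emph{relative} equality $\theta_\frak p(X(M_\frak p/K_\frak p))=N_{\frak P\mid\frak p}(E_\frak P^\times)$ at every finite prime; the delicate step is the inert dyadic case, settled by uniqueness of $\mathcal{O}_\frak p$-maximal lattices, which yields $X(M_\frak p/K_\frak p)=O^+(M_\frak p)$ there.

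A second genuine gap is your rank argument: Proposition \ref{4.5} classifies \emph{quaternary} $2$-universal lattices over dyadic fields, so it cannot force a $2$-LNG lattice to have rank $4$ (for instance $2^{-1}A(0,0)\perp 2^{-1}A(0,0)\perp\langle 1\rangle$ is $2$-universal of rank $5$, and a priori a $2$-LNG lattice could localize to it). The correct argument, as in Corollary \ref{2.3} and the last paragraph of the paper's proof, is that a locally $2$-universal lattice of rank at least $5$ is automatically indefinite $2$-universal, because representations of rank-$2$ lattices by indefinite lattices of codimension at least $3$ satisfy the local-global principle (\cite{hsia_indefinite_1998}); hence any $2$-LNG lattice has rank exactly $4$, and only after that do Theorem \ref{2.1} and Propositions \ref{3.3} and \ref{4.5} pin down its genus. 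Finally, the necessity direction is simpler than your spinor-theoretic plan: odd class number together with local $1$-universality already yields a single class in the genus by \cite[Proposition 3.2]{xu_indefinite_2020}, and a one-class genus that is locally $2$-universal is indefinite $2$-universal.
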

\begin{proof}
	 \underline{Necessity.} Let $M$ be a 2-LNG lattice over $F$. Since $M_\frak p$ is 2-universal for a non-dyadic prime $\frak p$, one concludes that $\rank (M)\geq 4$ by Proposition \ref{3.3}. Suppose that the class number of $F$ is odd. Since $M_\frak p$ is 2-universal for all $\frak p\in \Omega_F$, one has that $M_\frak p$ is also 1-universal. 	
So there is a single class in $\gen(M)$ by \cite[Proposition 3.2]{xu_indefinite_2020}. This implies that $M$ is indefinite 2-universal over $F$. A contradiction is derived.	
	
\medskip	
	
	\underline{Sufficiency.}  Let
$$ M=2^{-1}A(0, 0)\perp 2^{-1}A(0, 0)=(\mathcal O_F x+ \mathcal O_F y) \perp (\mathcal O_F z+ \mathcal O_F w)$$ where  $Q(x)=Q(y)=Q(z)=Q(w)=0$ and $B(x, y)=B(z, w)=\frac{1}{2} $.
Since the class number of $F$ is even, there exists an unramified quadratic extension $ E=F(\sqrt{c}) $ with $ c\in \mathcal{O}_{F} $.   Since $ E=F(\sqrt{c}) $ is an unramified quadratic extension, one concludes that $\ord_\frak p(c) \equiv 0 \pmod 2$ for all $\frak p\in \Omega_F\setminus \infty_F$.

Consider the quadratic space $Fu\perp Fv$ with
$  Q(u)=1$ and $Q(v)=-c$.    By \cite[81:14]{omeara_quadratic_1963}, we can define a binary $\mathcal O_F$-lattice $L$ on $Fu\perp Fv$ by local conditions as follows
$$ L_\frak p= \begin{cases} \mathcal{O}_{\frak p} u \perp \mathcal{O}_{\frak p} \pi_\frak p^{-t_\frak p} v \ \ \ & \text{if $\frak p$ is non-dyadic} \\
2^{-1}A(0, 0) \ \ \ & \text{if $\frak p$ is dyadic and splits completely in $E/F$}\\
2^{-1}A(2, 2\rho_\frak p) \ \ \ & \text{if $\frak p$ is dyadic and inert in $E/F$} \end{cases} $$
where $t_\frak p=\frac{1}{2} \ord_{\frak p} (c)$. We claim that there is an $\mathcal O_F$-lattice in $\gen(M)$ which cannot represent $L$. Then such a lattice is 2-LNG.
By \cite[Theorem 4.1]{hsia_indefinite_1998}, the claim follows as soon as we prove
\begin{equation}\label{spn}
\theta_\frak p (X(M_\frak p/K_\frak p ) ) =N_{\frak P\mid \frak p} (E_\frak P^\times)
\end{equation}
for all $\frak p\in \Omega_F\setminus \infty_F$, where $\frak P$ is a prime of $E$ above $\frak p$, $N_{\frak P\mid \frak p}$ is the norm map from $E_\frak P$ to $F_\frak p$, $K_\frak p$ is a fixed sublattice of $M_\frak p$ satisfying $K_\frak p \cong L_\frak p$ and
$$X(M_\frak p/K_\frak p)= \{ \sigma \in O^+(F_\frak p M_\frak p): \ K_\frak p \subset \sigma M_\frak p \} 	.$$
	
When $\frak p$ is a non-dyadic prime of $F$,  \eqref{spn} holds by \cite[92:5]{omeara_quadratic_1963} and \cite[Theorem 5.1]{hsia_indefinite_1998}.  	
	
When $\frak p$ is dyadic and splits completely in $E/F$, we have
 $$\theta_\frak p (X(M_\frak p/K_\frak p ) )\supseteq \theta_\frak p(O^+((F_\frak p K_\frak p)^\perp))= \theta_\frak p(O^+(\Bbb H))= F_\frak p^\times $$
 where $(F_\frak p K_\frak p)^\perp$ is the orthogonal complement of $F_\frak p K_\frak p$ in $F_\frak pM_\frak p$. Therefore (\ref{spn}) holds.

When $\frak p$ is dyadic and inert in $E/F$, we have the splitting
$$M_\frak p=K_\frak p\perp K_\frak p^\perp$$
by \cite[82:15]{omeara_quadratic_1963}. Since $\frak n(M_\frak p)=2\frak s(M_\frak p)$, 	one obtains that $\frak n(K_\frak p^\perp)=2\frak s(K_\frak p^\perp)$. Therefore $$K_\frak p^\perp \cong 2^{-1}A(2, 2\rho_\frak p)$$ by \cite[93:11.Example]{omeara_quadratic_1963}.  For any $\sigma\in X(M_\frak p/K_\frak p )$, one also has the splitting
$$ \sigma M_\frak p=K_\frak p \perp K_\sigma \ \ \ \text{with} \ \ \ K_\sigma\cong 2^{-1}A(2, 2\rho_\frak p) $$ by the same argument as above. Since both $ K_\frak p^\perp$ and $K_\sigma$ are $\mathcal{O}_{\frak p}$-maximal lattices on $F_\frak p K_\frak p^\perp$ by \cite[82:19]{omeara_quadratic_1963}, we have $K_\frak p^\perp = K_\sigma$ by
\cite[91:1.Theorem]{omeara_quadratic_1963}. This implies that
$$ 	 X(M_\frak p/K_\frak p ) =O^+(M_\frak p) \ \ \ \text{and} \ \ \ \theta_\frak p( X(M_\frak p/K_\frak p )) =\theta (O^+(M_\frak p))= \mathcal{O}_{\frak p}^\times (F_\frak p^\times)^2 $$
by \cite[Lemma 1]{hsia_spinor_1975}. Therefore \eqref{spn} follows from \cite[63:16.Example]{omeara_quadratic_1963}. The proof of the above claim is complete. 	
	
\medskip

Suppose that $N$ is a 2-LNG lattice over $F$. Then  $\rank(N)=4$ by \cite[\S 4  (5)]{hsia_indefinite_1998}. Since $N_\frak p$ is universal for all $\frak p\in \Omega_F$, one concludes that $$N\in \gen(2^{-1}A(0, 0)\perp 2^{-1}A(0, 0))$$ by Theorem \ref{2.1}, Proposition \ref{3.3} and Proposition \ref{4.5}.
\end{proof}

It should be pointed out that there are no classic $2$-LNG lattices over a number field by Proposition\;\ref{4.6}.
Here we provide a concrete example of 2-LNG integral quadratic form.

\begin{ex} Let $F=\Bbb Q(\sqrt{-5})$. Then
$$ (1+\sqrt{-5})x^2+5xy+(1-\sqrt{-5})y^2 + zw $$ is a 2-LNG integral quadratic form over $F$.
	\end{ex}
\begin{proof} It is well known that the class number of $F$ is two and the Hilbert class field of $F$ is $F(\sqrt{-1})$  (see  \cite[Example 3.8]{xu_indefinite_2020}). The quadratic form $xy+zw$ corresponds to the  $\mathcal O_F$-lattice
$$M=2^{-1}A(0, 0)\perp 2^{-1}A(0, 0)= (\mathcal O_F v_1+ \mathcal O_{F} v_2) \perp (\mathcal O_F v_3 + \mathcal O_{F} v_4) $$
where $Q(v_1)=Q(v_2)=Q(v_3)=Q(v_4)=0$ and $B(v_1, v_2)=B(v_3, v_4)=\frac{1}{2}$. Using by \cite[92:5]{omeara_quadratic_1963} and \cite[Lemma 1]{hsia_spinor_1975}, we find that
   $$\theta_\frak p (O^+(M_\frak p))= \mathcal{O}_{\frak p}^\times (F_\frak p^\times)^2 $$ for $\frak p\in \Omega_F\setminus \infty_F$. Thus,  the number $h(M)$  of proper classes in $\gen(M)$ is given by
   $$ h(M)=\bigg[\Bbb I_F : F^\times \bigg(\prod_{\frak p\in \infty_F} F_\frak p^\times\times \prod_{\frak p\in \Omega_F\setminus \infty_F} \mathcal{O}_{\frak p}^\times \bigg) \Bbb I_F^2\bigg] = [\pic(\mathcal O_F) : \pic(\mathcal O_F)^2]=2$$
 by \cite[33:14.Theorem, 102:7 and 104:5.Theorem]{omeara_quadratic_1963}. Here $\Bbb I_F$ denotes the id\`ele group of $F$.

Let $\frak q= (2, 1+\sqrt{-5})$ be the non-principal prime ideal of $F$ above $2$.  Define
$$N = (\frak q v_1 + \frak q^{-1} v_2) \perp (\mathcal O_F v_3 + \mathcal O_{F} v_4)\,,  $$
which corresponds to the integral quadratic form $$(1+\sqrt{-5})x^2+5xy+(1-\sqrt{-5})y^2 + zw $$ as shown in \cite[Example 3.8]{xu_indefinite_2020}. Then $N_\frak p=M_\frak p$ for $\frak p\neq \frak q$ and $\sigma_\frak q (M_{\mathfrak{q}})= N_{\mathfrak{q}}$ where
$$\sigma_\frak q v_1=\pi_\mathfrak{q} v_1, \ \ \ \sigma_{\mathfrak{q}}  v_2= \pi_{\mathfrak{q}}^{-1} v_2, \ \ \ \sigma_\frak q v_3=v_3 \ \ \ \text{and} \ \ \ \sigma_\frak q v_4= v_4 .$$
Since $\sigma_\frak q = \tau_{v_1-v_2} \tau_{v_1-\pi_\frak qv_2}$, one obtains that $\theta_\frak q(\sigma_\frak q)=\pi_\frak q (F_\frak q^\times)^2$. On the other hand, since $2$ is inert in $\Bbb Q(\sqrt{5})/\Bbb Q$, one concludes that the dyadic prime $\frak q$ of $F$ is inert in $F(\sqrt{-1})=F(\sqrt{5})$ by inspecting the extension degree of the residue fields. By using \cite[Chapter VI, \S5. (5.7) Corollary]{Neu1}, we see that
$$(i_\frak p)_{\frak p\in \Omega_F} \not \in F^\times \bigg(\prod_{\frak p\in \infty_F} F_\frak p^\times\times \prod_{\frak p\in \Omega_F\setminus \infty_F} \mathcal{O}_{\frak p}^\times \bigg) \Bbb I_F^2$$ where
$$ i_\frak p= \begin{cases} \pi_\frak q \ \ \ & \text{if } \frak p=\frak q  \\
1 \ \ \ & \text{otherwise.} \end{cases} $$
Therefore  $M$ and $N$ are exactly the representatives of the two classes in $\gen(M)$.

Put
$$ L= \mathcal O_F (v_1+ v_2) + \mathcal O_F (\sqrt{-5} v_2+v_3-v_4) \subset M\,. $$
 Since $\det(FL)\cdot \det(FM)\in (F^\times)^2$,
 $$E=F(\sqrt{-\det(FL)\cdot \det(FM)})=F(\sqrt{-1})$$
 is the Hilbert class field of $F$. Moreover, one has
  $$\theta_\frak p (X(M_\frak p/L_\frak p ) ) =N_{\frak P\mid \frak p} (E_\frak P^\times) $$
 for any prime $\frak p\neq \frak q$ of $F$ by \cite[Theorem 5.1]{hsia_indefinite_1998},  where
$$X(M_\frak p/L_\frak p)= \{ \sigma \in O^+(F_\frak p M_\frak p): \ L_\frak p \subset \sigma M_\frak p \}.$$
Since $ L_\frak q \cong 2^{-1} A(2, 2\rho_\frak q)$, the same arguments as in Theorem\;\ref{5.1} and \cite[63:16.Example]{omeara_quadratic_1963} yield
$$X(M_\frak q/L_\frak q)= \{ \sigma \in O^+(F_\frak q M_\frak q): \ L_\frak q \subset \sigma M_\frak q \}=O^+(M_\frak q) \ \ \ \text{and} \ \ \ \theta_\frak q(X(M_\frak q/L_\frak q))=N_{\frak Q\mid \frak q} (E_\frak Q^\times)\,, $$
  $\frak Q$ denoting a prime of $E$ above $\frak q$. One concludes that $L$ is not represented by $N$ by \cite[Theorem 4.1]{hsia_indefinite_1998}. Hence $N$ is  $2$-LNG over $F$.
\end{proof}

\section{Existence of classic 1-LNG lattices}\label{sec6}

In \cite{xu_indefinite_2020}, it has been proved that a number field $F$ admits 1-LNG lattices if and only if the class number of $F$ is even.  However, the lattices constructed in \cite[Theorem 3.5.(2)]{xu_indefinite_2020} are not classic integral. It is natural to ask what kind of number fields admit classic 1-LNG (see \cite[Remark 1.2]{xu_indefinite_2020}).

Recall that a prime $\frak p\in\infty_F$ is called ramified in a finite extension $E/F$ if $\frak p$ is real and $E$ has a complex prime above $\frak p$ (\cite[p.172]{Janusz}). We say a finite extension $E/F$ is  unramified if it is unramified at all (finite or infinite) primes of $F$.

\begin{thm} \label{6.1} A number field $F$ admits a classic 1-LNG $\mathcal O_F$-lattice if and only if there is a quadratic unramified extension  $E/ F $ in which every dyadic prime of $F$ splits completely. In this case, there are infinitely many classes of 1-LNG free $\cO_F$-lattices.
\end{thm}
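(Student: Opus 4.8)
The plan is to follow the template of the proof of Theorem \ref{5.1}, the key tool again being the spinor-norm criterion for indefinite representations \cite[Theorem 4.1]{hsia_indefinite_1998}. The first point to establish is that a classic $1$-LNG lattice must have rank exactly $3$: local classic $1$-universality forces $\rank(L)\ge 3$ (by Theorem \ref{2.1} and Proposition \ref{4.2} no classic integral binary lattice is classic $1$-universal at a dyadic prime, the maximal lattice $2^{-1}A(0,0)$ on $\Bbb H$ not being classic integral), while the Hsia--Kitaoka--Kneser local-global principle gives global representation of rank-one lattices as soon as $\rank(L)-1\ge 3$, so a genuine LNG obstruction can occur only in rank $3$. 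Thus $V:=FL$ is a ternary space that is $1$-universal at every completion, hence isotropic at every finite prime and indefinite at every real prime; by Hasse--Minkowski $V$ is globally isotropic. The failure of $L$ to represent some locally represented $\langle m\rangle$ is then a spinor-exceptional phenomenon governed, exactly as in Theorem \ref{5.1}, by the quadratic extension $E=F(\sqrt{-m\cdot\det(V)})$.

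For the necessity I would compute the local spinor norms $\theta_\frak p(X(M_\frak p/K_\frak p))$ (with $K_\frak p\cong\langle m\rangle_\frak p$ and $M\in\gen(L)$) and match them against $N_{\frak P\mid\frak p}(E_\frak P^\times)$. At non-dyadic finite primes and at real primes the isotropy (resp.\ indefiniteness) of $V_\frak p$ gives $\theta_\frak p(X(M_\frak p/K_\frak p))\supseteq\cO_\frak p^\times(F_\frak p^\times)^2$ (resp.\ $=F_\frak p^\times$), which forces $E/F$ to be unramified at every such prime. The decisive step is at a dyadic prime $\frak p$: since $L_\frak p$ is classic $1$-universal it represents $\langle\pi_\frak p\rangle$, so $L_\frak p$ contains a primitive vector $v$ with $Q(v)=\pi_\frak p$; because $e=\ord_\frak p(2)\ge 1$ and $\frak s(L_\frak p)\subseteq\cO_\frak p$, one checks $\tfrac{2B(x,v)}{\pi_\frak p}\in\cO_\frak p$ for all $x\in L_\frak p$, so the symmetry $\tau_v$ lies in $O(L_\frak p)$. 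Pairing $\tau_v$ with a symmetry along a unit-norm vector exhibits an element of odd $\frak p$-adic valuation in $\theta_\frak p(O^+(M_\frak p))\subseteq\theta_\frak p(X(M_\frak p/K_\frak p))$, whence this group equals all of $F_\frak p^\times$. Matching then forces $N_{\frak P\mid\frak p}(E_\frak P^\times)=F_\frak p^\times$, i.e.\ $\frak p$ splits completely in $E$. Therefore $E/F$ is an unramified quadratic extension in which every dyadic prime splits.

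For the sufficiency, given such an extension $E=F(\sqrt{c})$ (with $c$ totally positive, so that $E/F$ is unramified at the real primes), I would realize the obstruction for infinitely many square classes $m$. Concretely, take the globally isotropic space $V_m\cong\Bbb H\perp[-cm]$, so that $\det(V_m)=-cm$ and the exceptional extension $F(\sqrt{-m\det(V_m)})=F(\sqrt{c})$ is exactly $E$; at each dyadic prime the splitting hypothesis makes $c$ a local square, rendering $V_m$ isotropic there. Using \cite[81:14]{omeara_quadratic_1963} I would glue a classic integral $\cO_F$-lattice $L_m$ on $V_m$ with $L_{m,\frak p}$ classic $1$-universal at every prime: at non-dyadic primes the maximal lattice on the isotropic space works, and at split dyadic primes one exhibits a classic integral classic $1$-universal model. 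Then the spinor-norm identity $\theta_\frak p(X(M_\frak p/K_\frak p))=N_{\frak P\mid\frak p}(E_\frak P^\times)$ holds at all primes---both sides equal $F_\frak p^\times$ at split (in particular dyadic) primes, and elsewhere it follows as in Theorem \ref{5.1} and \cite[Theorem 5.1]{hsia_indefinite_1998}. By \cite[Theorem 4.1]{hsia_indefinite_1998} some class in $\gen(L_m)$ fails to represent $\langle m\rangle$ and is therefore classic $1$-LNG. Letting $m$ range over the infinitely many global square classes meeting the finitely many prescribed dyadic local conditions yields ternary lattices of infinitely many distinct determinants $-cm$, hence infinitely many isometry classes of classic $1$-LNG free lattices.

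The main obstacle is the dyadic local analysis underlying both directions. For necessity one must be certain that classic integrality together with representation of $\langle\pi_\frak p\rangle$ genuinely rigidifies $\theta_\frak p(X(M_\frak p/K_\frak p))$ to $F_\frak p^\times$; for sufficiency one must produce on the isotropic ternary space $V_{m,\frak p}$ a classic integral lattice that is classic $1$-universal. This is precisely the feature separating the present result from Theorem \ref{5.1}: there the non-classic lattice $2^{-1}A(2,2\rho_\frak p)$ has spinor norm $\cO_\frak p^\times(F_\frak p^\times)^2$, which matches an \emph{inert} dyadic prime and lets every even-class-number field carry a (non-classic) $2$-LNG lattice, whereas the constraint $\frak s(L)\subseteq\cO_F$ forces the dyadic spinor norm up to $F_\frak p^\times$ and thereby compels the dyadic primes to split. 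Making these dyadic statements rigorous---via the classification of dyadic lattices and of classic $1$-universal lattices in \cite{beli_universal_2020}---is the technical heart of the argument.
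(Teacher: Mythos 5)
Your necessity argument is essentially the paper's own proof: reduction to rank $3$, the matching $\theta_\frak p(X(L_\frak p,K_\frak p))=N_{\frak P\mid\frak p}(E_\frak P^\times)$ from \cite[Theorem 4.1]{hsia_indefinite_1998}, unramifiedness of $E/F$ from the unit spinor norms, and the key dyadic observation that classic integrality puts the symmetry $\tau_v$, $\ord_\frak p(Q(v))=1$, inside $O(L_\frak p)$, forcing $\theta_\frak p(O^+(L_\frak p))=F_\frak p^\times$ and hence complete splitting. The sufficiency also begins like the paper's (glue a locally classic $1$-universal lattice on a ternary space whose exceptional field is $E$), up to a sign slip: with the paper's conventions $\det(\Bbb H\perp[-cm])=cm$, so you need $V_m=\Bbb H\perp[cm]$ to get $F(\sqrt{-m\det V_m})=F(\sqrt{c})$.

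The genuine gap is in the infinitude argument. The criterion of \cite[Theorem 4.1]{hsia_indefinite_1998} produces a lattice in $\gen(L_m)$ missing $\langle m\rangle$ only if the equality $\theta_\frak q(X(M_\frak q/K_\frak q))=N_{\frak Q\mid\frak q}(E_\frak Q^\times)$ holds at \emph{every} prime, whereas you constrain $m$ only at the dyadic primes. At a non-dyadic prime $\frak q$ inert in $E$ with $\ord_\frak q(m)$ odd, $\ord_\frak q(\det V_m)$ is odd, so every lattice on $V_{m,\frak q}$ has a Jordan component of odd scale; the symmetry along a norm generator of that component lies in $O(L_{m,\frak q})$ and has spinor norm of odd valuation, so $\theta_\frak q(X(M_\frak q/K_\frak q))\supseteq\theta_\frak q(O^+(M_\frak q))=F_\frak q^\times\supsetneq N_{\frak Q\mid\frak q}(E_\frak Q^\times)$, the latter consisting of elements of even valuation. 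The equality fails, every lattice in $\gen(L_m)$ represents $\langle m\rangle$, and no LNG lattice arises from this pair; repairing this requires conditions on $m$ at \emph{infinitely} many primes (even valuation at all inert primes) together with the Schulze-Pillot type computations \cite{schulzepillot_Darstellung_1980} at the primes dividing $m$, none of which appear in your sketch. A second omission: the theorem asserts infinitely many classes of $1$-LNG \emph{free} lattices, but the lattice supplied by \cite[Theorem 4.1]{hsia_indefinite_1998} is an uncontrolled member of the genus, and you never address its Steinitz class. The paper solves both problems with one device you do not use: it fixes a single lattice $P$ in the genus that fails to represent $1$, then passes to sublattices $P_i\subset P$ rescaled at auxiliary primes $\frak q_i$ chosen by Tchebotarev so that the Steinitz class of $P_i$ is trivial; each $P_i$ is free, still locally $1$-universal, has a distinct determinant, and fails to represent $1$ for the trivial reason that $P_i\subseteq P$ --- so no spinor condition ever needs re-verification. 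Finally, your deferral of the dyadic existence step (a classic integral, classic $1$-universal ternary lattice on the given local space) to \cite{beli_universal_2020} is acceptable in principle, but it is a required ingredient; the paper does it explicitly via the lattices $A(1,-\pi_\frak p)\perp\langle-(1+\pi_\frak p)\beta_\frak p\rangle$ and $A(\kappa_\frak p,-\kappa_\frak p^{-1}\pi_\frak p)\perp\langle-(1+\pi_\frak p)\beta_\frak p\rangle$, whose weight and Hasse-symbol computations are what make the construction work.
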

\begin{proof} \underline{Necessity}. Let $M$ be a classic 1-LNG $\mathcal{O}_F$-lattice and $K$ be an integral $\cO_F$-lattice of rank 1 such that $K$ is not represented by $M$.
Since there is no classic 1-universal binary lattice over dyadic local fields by \cite[Corollary 2.9]{xu_indefinite_2020}, one obtains $\rank (M)\geq 3$.
By \cite[p.135, line 2-4]{hsia_indefinite_1998}, one concludes that $$\rank (M)=3 \ \ \ \text{ and } -\det(FK) \cdot \det(FM) \not \in (F^\times)^2 . $$ Let $ E=F(\sqrt{-\det(FK)\cdot \det(FM)}) $ and $L\in \gen(M)$ such that $K\subset L$. Then
$$ \theta_\frak p (X(L_\frak p, K_\frak p) ) = N_{\mathfrak{P}\mid \mathfrak{p}}(E_\frak P^\times)$$
by \cite[Theorem 4.1]{hsia_indefinite_1998}, where $\frak P$ is a prime in $E$ above $\frak p$ and $N_{\frak P\mid \frak p}$ is the norm map from $E_\frak P$ to $F_\frak p$ and
$$ X(L_\frak p, K_\frak p) = \{ \sigma\in O^+(F_\frak p M_\frak p): \ K_\frak p\subset \sigma (L_\frak p) \} $$ for all $\frak p\in \Omega_F$.
Since $L_\frak p$ is 1-universal over $F_\frak p$ for all $\frak p\in \Omega_F$, one obtains
$$\theta_\frak p (X(L_\frak p, K_\frak p) ) \supseteq  \theta_\frak p(O^+(L_\frak p)) \supseteq  \begin{cases}  \mathcal{O}_{\frak p}^\times \ \ \ & \frak p\in \Omega_F\setminus \infty_F \\
 F_\frak p^\times \ \ \ & \frak p\in \infty_F \end{cases} $$
by \cite[Lemma 2.2]{xu_indefinite_2020}. So $N_{\mathfrak{P}\mid \mathfrak{p}}(E_\frak P^\times)\supseteq \cO_{\frak p}^\times$ for all $\frak p\in\Omega_F$, and hence $E/F$ is an unramified extension by \cite[Chap.\;VI, (6.6) Corollary]{Neu1}.

 Now let $\frak p$ be a dyadic prime of $F$. Since $L_\frak p$ is classic integral,  the symmetry
 $\tau_v$  is an element in $O(L_\frak p)$ for any $v\in L_\frak p$ with $0\leq \ord_\frak p(Q(v))\leq 1$.  Then
 $ \theta_\frak p(O^+(L_\frak p)) = F_\frak p^\times $
 by  \cite[Lemma 2.2]{xu_indefinite_2020}.
 Therefore $N_{\mathfrak{P}\mid \mathfrak{p}}(E_\frak P^\times)=F_{\frak p}^\times$, and hence $E_{\mathfrak{P}}=F_{\frak p}$ by local class field theory (cf. \cite[Chap.\;V,\,(1.3) Theorem]{Neu1}). This proves that every dyadic prime of $F$ splits completely in $E/F$ as desired.

\

\underline{Sufficiency}. Suppose that $E/F$ is a quadratic unramified extension in which dyadic primes of $F$ all split completely. We can write $E=F(\sqrt{-b})$ for some  $b\in \mathcal{O}_{F} $. Since $E/F$ is unramified, one has that $ \ord_{\frak p}(b) $ is even for all $ \mathfrak{p}\in \Omega_{F}\backslash \infty_{F} $. Consider the ternary space $V=[1,\,-1,\,-b]$. This is the unique isotropic ternary space with  $\det(V)=b$.

Consider an $\cO_F$-lattice $L=\langle 1,\,-1,\,-b\rangle$ on $V$.
Write $\beta_{\mathfrak{p}}:=b\pi_{\mathfrak{p}}^{-\ord_{\frak p}(b)} $ for each $\frak p\in \Omega_F\setminus\infty_F$. Then $ \beta_{\frak p}\in \mathcal{O}_{F_{\mathfrak{p}}}^{\times } $. Since $\ord_{\frak p}(b)=0$ for almost all $\frak p$, one has $L_{\frak p}=\langle 1, -1, -\beta_{\frak p}\rangle$ at almost all primes. When $\frak p$ is dyadic, set $\kappa_{\frak p}:=1+4\rho_{\mathfrak{p}}\pi_{\mathfrak{p}}^{-1} $. Then $(\kappa_{\frak p}, 1+\pi_{\mathfrak{p}})_{\mathfrak{p}}=-1 $ by Lemma \ref{4.1}.

By \cite[81:14]{omeara_quadratic_1963}, one can define a ternary $ \mathcal{O}_{F} $-lattice $ M $ on $ V $ by local conditions as follows:
\begin{align*}
	M_{\mathfrak{p}}=
	\begin{cases}
		\langle 1, -1, -\beta_{\frak p}  \rangle  &\text{if $ \mathfrak{p} $ is non-dyadic},\\
		A(1,-\pi_{\mathfrak{p}})\perp \langle -(1+\pi_{\mathfrak{p}})\beta_{\frak p} \rangle &\text{if $ \mathfrak{p} $ is dyadic and $ (1+\pi_{\mathfrak{p}},-\beta_{\mathfrak{p}})_{\mathfrak{p}}=1 $},\\
		A(\kappa_{\mathfrak{p}},-\kappa^{-1}_{\mathfrak{p}}\pi_{\mathfrak{p}})\perp \langle -(1+\pi_{\mathfrak{p}})\beta_{\frak p} \rangle &\text{if $ \mathfrak{p} $ is dyadic and $ (1+\pi_{\mathfrak{p}},-\beta_{\mathfrak{p}} )_{\mathfrak{p}}=-1$}\,.
	\end{cases}
\end{align*}

When $ \mathfrak{p} $ is a non-dyadic prime, $ M_{\mathfrak{p}} $ is unimodular and so is 1-universal by \cite[Proposition 2.3]{xu_indefinite_2020}. When $ \mathfrak{p} $ is a dyadic prime, using Lemma \ref{4.1} we can easily check that  $ S_{\mathfrak{p}}(F_{\frak p}M)=(-1,-1)_{\mathfrak{p}}$ and so $ F_{\mathfrak{p}}M  $ is isotropic by \cite[58:6]{omeara_quadratic_1963}. Also, we have the weight $ \mathfrak{w}(M_{\mathfrak{p}})=\mathfrak{p} $ by \cite[93:5]{omeara_quadratic_1963}. So $ M_{\mathfrak{p}} $ is 1-universal by \cite[Proposition 2.17]{xu_indefinite_2020}.

Fix $x_\frak p \in M_\frak p$ with $Q(x_\frak p)=1$ and write
$$ X(M_\frak p, 1) = \{ \sigma\in O^+(F_\frak p M_\frak p): \ x_\frak p\in \sigma (M_\frak p) \} $$ for all $\frak p\in \Omega_F$. When $ \mathfrak{p} $ is a non-dyadic prime,  $\theta_\frak p (X(M_\frak p, 1))=N_{\frak P\mid \frak p} (E_\frak P^\times) $ by the second part of \cite[Satz 3(a)]{schulzepillot_Darstellung_1980} with $ r=s=0 $. When $\mathfrak{p}$ is a dyadic prime,  $ \theta_\frak p (X(M_\frak p, 1))= F_\frak p^\times $ by \cite[Theorem 2.1 (Case III)]{Xu4}. On the other hand, the prime $\frak p$ splits completely in $E/F$, thus $N_{\mathfrak{P}\mid \mathfrak{p}}(E_\frak P^\times)=F_{\mathfrak{p}}^{\times}$.
We conclude that $\theta_\frak p (X(M_\frak p, 1))=N_{\mathfrak{P}\mid \mathfrak{p}}(E_\frak P^\times) $ for all $\frak p\in \Omega_F$. By \cite[Theorem 4.1]{hsia_indefinite_1998},  there is an $\mathcal O_F$-lattice in $\gen(M)$ which does not represent $1$. This lattice is classic 1-LNG as desired.

Fix $P\in \gen(M)$ such that 1 is not represented by $P$. There is a base $\{z_1, z_2, z_3\}$ for $FP$ such that $P= \frak b_1 z_1+ \frak b_2  z_2+ \frak b_3 z_3$ where $\frak b_1, \frak b_2$ and $\frak b_3$ are fractional ideals of $\cO_F$ by \cite[81:3. Theorem]{omeara_quadratic_1963}. The Steinitz class of $P$ is defined as the class of $\frak b_1\frak b_2\frak b_3$ in $\pic(\cO_F)$ and is independent of choices of bases of $FP$ (cf. \cite[9.3.10, p.\hskip 0.1cm 143]{Voight}). Moreover, $P$ is free if and only if the Steinitz class of $P$ is trivial in $\pic(\cO_F)$.

 By Tchebotarev's density theorem \cite[Chapter V, (6.4) Theorem]{Neu86}, there are infinitely many non-dyadic primes $\frak q_1, \cdots, \frak q_n , \cdots$ such that the class of each $\frak q_i$ in $\pic(\cO_F)$ is equal to the inverse of the Steinitz class of $P$ for $i=1, \cdots, n, \cdots$.  Define a sub-lattice $P_i$ of $P$ by
$$ (P_i)_{\frak p} = \begin{cases} P_\frak p \ \ \ & \text{when $\frak p \neq \frak q_i$} \\
\langle 1, -1, -\beta_{\frak p} \pi_\frak p^2  \rangle \ \ \ & \text{when $\frak p=\frak q_i$} \end{cases} $$
for each $i=1, \cdots, n , \cdots $.  Since the Steinitz class of $P_i$ is equal to the product of the Steinitz class of $P$ and the class of $\frak q_i$ by \cite[82:11]{omeara_quadratic_1963}, one has the Steinitz class of $P_i$ is trivial and $P_i$ is a free $\cO_F$-module for $i=1, \cdots, n, \cdots$. Since $(P_i)_{\frak q_i}$ is also $1$-universal by \cite[Proposition 2.3]{xu_indefinite_2020}, one obtains that $P_i$ is 1-universal over $\cO_\frak p$ for all $\frak p\in \Omega_F$. Since $P$ cannot represent $1$, one concludes that $P_i$ cannot represent $1$ either for $i=1, \cdots, n\cdots$.
 Namely, all $P_i$ are  classic 1-LNG free lattices for $i=1, \cdots, n\cdots$.  Since $\det(P_i)\neq \det(P_j)$ for $i\neq j$, none of them are in the same class as desired.
\end{proof}

In the rest of this section, we determine all quadratic fields over which there are classic 1-LNG lattices. The starting point of the arguments is that all quadratic unramified  extensions of a quadratic field can be described explicitly by the genus theory.

\begin{prop}\label{6.3} Let $F$ be a quadratic field with discriminant $d_F$ and let $p_1, \cdots, p_t$ be all the odd prime divisors of $d_F$.  Write $p_i^*=(-1)^{\frac{p_i-1}{2}}p_i$ for each $1\leq i\leq t$ and put
\[
G^{(+)}=F(\sqrt{p_1^*},\cdots, \sqrt{p_t^*})=\Q(\sqrt{d_F}\,,\,\sqrt{p_1^*},\cdots, \sqrt{p_t^*})\,.
\]  Here $G^{(+)}=F$ if $t=0$.
Then a quadratic extension $E/F$ is unramified if and only if $E\subseteq G^{(+)}$ and in case $F$ is real, $E$ is totally real.
\end{prop}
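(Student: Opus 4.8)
The plan is to recognize $G^{(+)}$ as the \emph{narrow genus field} of $F$ and to run the two implications through class field theory together with the classical genus theory of quadratic fields. First I would record the bookkeeping identity underlying the definition: writing $d_0=d_F/\prod_{i=1}^t p_i^\ast\in\{1,-4,\pm 8\}$ for the $2$-part of the discriminant, one has $\sqrt{d_F}=\sqrt{d_0}\cdot\sqrt{\prod_i p_i^\ast}$, so that
\[
G^{(+)}=\Q\big(\sqrt{d_F},\sqrt{p_1^\ast},\dots,\sqrt{p_t^\ast}\big)=\Q\big(\sqrt{d_0},\sqrt{p_1^\ast},\dots,\sqrt{p_t^\ast}\big),
\]
exhibiting $G^{(+)}$ as the compositum of $F$ with the quadratic fields cut out by all prime discriminants dividing $d_F$. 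In particular $G^{(+)}/\Q$ is multiquadratic, hence abelian.

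For the \emph{sufficiency} direction I would show that $G^{(+)}/F$ is unramified at every finite prime; granting this, any intermediate $E$ with $[E:F]=2$ is automatically unramified at the finite primes, and the hypothesis that $E$ is totally real when $F$ is real supplies unramifiedness at the archimedean primes. The finite computation is a comparison of ramification indices in the tower $\Q\subset F\subset G^{(+)}$. Since each $p_i^\ast\equiv 1\pmod 4$, every quadratic subfield $\Q(\sqrt{\prod_{i\in S}p_i^\ast})$ is unramified at $2$, so $\Q(\sqrt{p_1^\ast},\dots,\sqrt{p_t^\ast})$ is unramified at $2$; comparing the inertia at $2$ with $e(2,F/\Q)$ then forces $2$ to be unramified in $G^{(+)}/F$. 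For an odd $p_i\mid d_F$, tameness makes the inertia at $p_i$ in $\gal(G^{(+)}/\Q)$ cyclic, hence of order at most $2$ inside this elementary abelian $2$-group; as $p_i$ already ramifies in $F/\Q$ the inertia has order exactly $2$, equal to $e(p_i,F/\Q)$, so $p_i$ is unramified in $G^{(+)}/F$. All remaining rational primes are unramified in $G^{(+)}/\Q$, and the claim follows.

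For the \emph{necessity} direction, let $E/F$ be quadratic and unramified at all primes. The archimedean part is immediate: when $F$ is real, unramifiedness at the real places is precisely the statement that $E$ is totally real. For the finite part I would invoke class field theory: an everywhere-unramified $E/F$ lies in the wide Hilbert class field $H_F$, so it corresponds under $\gal(H_F/F)\cong\pic(\cO_F)$ to a surjection $\pic(\cO_F)\twoheadrightarrow\Z/2\Z$. Here the classical genus theory enters: $\pic(\cO_F)/2\,\pic(\cO_F)$ is generated by the classes of the ramified primes, and the maximal unramified subextension of $F$ that is abelian over $\Q$ is exactly the genus field $G^{(+)}$; equivalently, every everywhere-unramified quadratic extension of $F$ is abelian over $\Q$ and is generated over $F$ by the square root of a product of prime discriminants dividing $d_F$. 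Either citing this or arguing by descent that $E=F(\sqrt{\alpha})$ may be normalized so that $\alpha\in\Q^\times$ divides $d_F$ modulo $(F^\times)^2$, one concludes $E\subseteq G^{(+)}$.

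The main obstacle is the necessity direction, and specifically the genus-theoretic input that an everywhere-unramified quadratic $E/F$ must be abelian over $\Q$. The sufficiency is a bounded ramification calculation in a multiquadratic field; by contrast, pinning $E$ down inside $G^{(+)}$ requires the structure of the $2$-torsion of the class group (the ambiguous class number formula, or equivalently the identification of the genus field), which I would either import as a standard result or reprove via the explicit descent normalizing the generator $\alpha$ to a rational divisor of $d_F$.
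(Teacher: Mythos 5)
Your proposal is correct, and it splits into two halves that relate to the paper's proof differently. The necessity direction is essentially the paper's argument: a quadratic everywhere-unramified $E/F$ lies in the Hilbert class field $H$, corresponds to an index-$2$ subgroup of $\pic(\cO_F)$, which necessarily contains $\pic(\cO_F)^2$, and the field cut out by $\pic(\cO_F)^2$ is identified with the (extended) genus field by citation --- the paper imports exactly this from Janusz and Ishida, so your ``standard result to be imported'' is the same external input, and your alternative descent sketch is not developed enough to replace it. The sufficiency direction is where you genuinely differ: rather than deducing unramifiedness of $G^{(+)}/F$ at the finite primes from Ishida's characterization of $G^{(+)}$ as the maximal abelian-over-$\Q$ extension of $F$ unramified at all finite primes, you prove it directly from the prime-discriminant factorization $d_F=d_0\prod_i p_i^*$: the odd $p_i$ are handled by tameness (cyclic inertia inside an elementary abelian $2$-group), and $2$ by the observation that every quadratic subfield of $\Q(\sqrt{p_1^*},\ldots,\sqrt{p_t^*})$ has odd discriminant, followed by a comparison of inertia with $e(2,F/\Q)$. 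This computation is correct and makes that half self-contained; what the paper's formulation buys instead is uniformity and reusability: its identification of the wide genus field $G$ with the fixed field of $\pic(\cO_F)^2$ is what powers the class-number count $[\pic(\cO_F):\pic(\cO_F)^2]=[G:F]$ in the example following Theorem~\ref{6.4}. One imprecision you should fix: you describe $G^{(+)}$ as ``the maximal unramified subextension of $F$ that is abelian over $\Q$'', but when $F$ is real the maximal \emph{everywhere}-unramified such extension is the wide genus field $G$, i.e.\ the maximal totally real subfield of $G^{(+)}$; the field $G^{(+)}$ is maximal only among extensions unramified at the \emph{finite} primes. The slip is harmless in your argument, since you only conclude $E\subseteq G^{(+)}$ and handle the archimedean places separately, but the distinction between $G$ and $G^{(+)}$ is precisely what the totally-real clause in the statement is tracking.
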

\begin{proof}
Let $H$ be the Hilbert class field of $F$. Then the Artin map gives an isomorphism
between $ \pic(\mathcal O_F) $ and $ \gal(H/F) $ by \cite[Chap.\,VI, (6.9) Proposition]{Neu1}.
  The subgroup $\pic(\mathcal O_F)^2$ generated by $\{ \frak a^2: \frak a\in \pic(\mathcal O_F)\} $
  corresponds to an intermediate field $G$ of $H/F$. It is the \emph{genus field} of $F$ in the sense of \cite[\S\,VI.3, pp.243--244]{Janusz}, characterized as the maximal unramified abelian extension of $F$ which is abelian over $\Q$. Note that if $F$ is real, then $G$ must be totally real for otherwise it would be ramified at a real prime of $F$.

The field $G^{(+)}$ is called the \emph{extended genus field} of $F$ in  \cite[\S\,VI.3]{Janusz} (but in \cite{Ishida} it is called the \emph{genus field} of $F$). It is the maximal abelian extension of $\Q$ containing $F$ which is unramified at all finite primes of $F$ (\cite[pp.3--4]{Ishida}). So clearly $G\subseteq G^{(+)}$. If $F$ is imaginary, then $G=G^{(+)}$. If $F$ is real, then $G$ coincides with the maximal totally real subfield of $G^{(+)}$.

If $E\subseteq G$, then as a subextension of an unramified extension, $E/F$ is unramified. Conversely, assume that $E/F$ is unramified. Then $E\subseteq H$. Since the quotient group
$$\mathrm{Gal}(H/F)/\mathrm{Gal}(H/E)\cong\mathrm{Gal}(E/F) $$ is 2-torsion, we have $\mathrm{Gal}(H/G)\subseteq \mathrm{Gal}(H/E)$. Hence $E\subseteq G$. This completes the proof.
\end{proof}


\begin{thm} \label{6.4} Let $F$ be a quadratic field with discriminant $d_F$ and let $p_1, \cdots, p_t$ be all the odd prime divisors of $d_F$.
Then there is no classic ternary 1-LNG lattice over $\cO_F$ if and only if $F$ is one of the fields in the following table:

\begin{center}
\renewcommand{\arraystretch}{1.5}
\begin{tabular*}{16.3cm}{c|c|c}
\hline
$d_F\pmod{4}$ &  $F$ \text{ real} &  $F$ \text{ imaginary} \\
\hline
\multirow{5}{*}{$\equiv 1$}  & $\mathbb{Q}(\sqrt{p_1})$ \text{ with } $p_1\equiv 1\pmod{4}$  & $\mathbb{Q}(\sqrt{-p_1})$ \text{ with } $p_1\equiv 3\pmod{4}$  \\
\cline{2-3}
 & $\mathbb{Q}(\sqrt{p_1p_2})$ \text{ with } $p_1\equiv p_2\equiv 3\pmod{4}$  & $\mathbb{Q}(\sqrt{-p_1p_2})$ \text{ with } \\
 & \qquad \text{ or } $p_1\equiv p_2\equiv -3 \pmod{8}$ &  $p_1\equiv -p_2\equiv\pm 3\pmod{8}$ \\
 \cline{2-2}
 & $\mathbb{Q}(\sqrt{p_1p_2p_3})$ \text{ with } $p_1\equiv -1\pmod{8}$ & \\
 &  $p_2\equiv 3,\,p_3\equiv -3 \pmod{8}$  & \\
\hline
\multirow{7}{*}{$\equiv 0$} &    $\mathbb{Q}(\sqrt{2})$ &  $\Q(\sqrt{-1}); \ \Q(\sqrt{-2})$\\
\cline{2-3}
 & $\mathbb{Q}(\sqrt{p_1})$ \text{ with } $p_1\equiv 3\pmod{4}$  & $\mathbb{Q}(\sqrt{-p_1})$ \text{ with } $p_1\equiv -3\pmod{8}$ \\
 \cline{2-3}
 & $\mathbb{Q}(\sqrt{2p_1})$ \text{ with } $p_1\equiv -1,\,\pm 3\pmod{8}$  & $\mathbb{Q}(\sqrt{-2p_1})$ \text{ with } $p_1\equiv \pm 3\pmod{8}$ \\
\cline{2-2}
 & $\mathbb{Q}(\sqrt{p_1p_2})$ \text{ or } $\mathbb{Q}(\sqrt{2p_1p_2})$ \text{ with }  &  \\
 & $p_1\equiv -3\pmod{8}$, $p_2\equiv 3\pmod{4}$  & \\
  \cline{2-2}
   & $\mathbb{Q}(\sqrt{2p_1p_2})$ \text{ with } & \\
   & $p_1\equiv -3p_2\equiv -1,\,3\pmod{8}$ & \\
\hline
\end{tabular*}
\end{center}
\end{thm}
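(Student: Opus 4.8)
The plan is to read Theorem~\ref{6.4} through the lens of Theorem~\ref{6.1} and Proposition~\ref{6.3}, reducing it to a finite, purely dyadic computation. By Theorem~\ref{6.1}, $F$ carries a classic ternary $1$-LNG lattice if and only if there is a quadratic unramified extension $E/F$ in which every dyadic prime of $F$ splits completely; hence $F$ belongs to the stated table \emph{precisely when no such $E$ exists}. So the entire proof consists in deciding, for each quadratic $F$, whether a suitable $E$ can be found, and recording the failures.

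First I would parametrize the candidate extensions by unwinding Proposition~\ref{6.3}. Writing $d_F=d_0\cdot p_1^\ast\cdots p_t^\ast$ as the product of its distinct prime discriminants, where $d_0\in\{1,-4,8,-8\}$ is the dyadic part, the proposition shows that every quadratic unramified $E/F$ has the shape $E=F(\sqrt{D})$ with $D$ a product of a proper nonempty subset of $\{d_0,p_1^\ast,\dots,p_t^\ast\}$; equivalently $E$ corresponds to a factorization $d_F=D\cdot D'$ into two coprime products of prime discriminants, and since $\sqrt{d_F}\in F$ the two factors give the same extension. When $F$ is real the requirement that $E$ be totally real becomes the positivity condition $D>0$. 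Thus nontrivial candidates exist only when $r:=t+\deg_2(d_0)\ge 2$ prime discriminants occur, and for $r\le 1$ (that is, $d_F$ odd with $t\le1$, or $2\mid d_F$ with $t=0$) no nontrivial unramified $E$ exists and $F$ automatically lies in the table; this already yields the entries $\mathbb{Q}(\sqrt{p_1})$ with $p_1\equiv1\pmod4$, $\mathbb{Q}(\sqrt{-p_1})$ with $p_1\equiv3\pmod4$, and $\mathbb{Q}(\sqrt{-1}),\mathbb{Q}(\sqrt2),\mathbb{Q}(\sqrt{-2})$.

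Next I would convert ``every dyadic prime splits completely in $F(\sqrt{D})$'' into $D\in(F_{\frak p}^\times)^2$ for each dyadic $\frak p$, noting that since $\sqrt{d_F}\in F_{\frak p}$ one may replace $D$ by its odd cofactor $D'$. The analysis then splits by the behaviour of $2$. If $d_F\equiv1\pmod4$ then $2$ is unramified: for $d_F\equiv1\pmod8$ the prime $2$ splits, each completion is $\mathbb{Q}_2$, and the condition is $D\equiv1\pmod8$; for $d_F\equiv5\pmod8$ the prime $2$ is inert with completion the unramified quadratic extension of $\mathbb{Q}_2$, where an odd integer is a square iff it is $\equiv1$ or $5\pmod8$, i.e.\ $D\equiv1\pmod4$. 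If $d_F\equiv0\pmod4$ then $2$ ramifies, there is a unique dyadic prime, and $F_{\frak p}=\mathbb{Q}_2(\sqrt m)$ is a ramified quadratic extension; here one decides $D\in(F_{\frak p}^\times)^2$ via $\mathbb{Q}_2(\sqrt D)\subseteq F_{\frak p}$, computed with Hilbert symbols using \cite[63:11a]{omeara_quadratic_1963} and Lemma~\ref{4.1}. Concretely an odd $D$ is then a square iff $D\equiv1\pmod8$, or $m$ is odd and $D\equiv m\pmod8$; when $m$ is even no odd non-square becomes a square. These alternatives are exactly what produce the congruences $p_i\equiv\pm3\pmod8$ and $p_i\equiv-1\pmod8$ recorded in the table.

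Finally I would run the case analysis over $t$, together with the residue of $d_F\bmod 8$ and of each $p_i\bmod 8$, checking in every case whether some admissible factorization $d_F=D\cdot D'$ can simultaneously meet the dyadic-square condition above and (when $F$ is real) the positivity condition $D>0$; the field $F$ lies in the table exactly in the residual cases where no factorization meets both. For small $r$ only finitely many factorizations occur, and I expect that once $r\ge4$ there are always enough prime-discriminant factors to realize a dyadic square of the correct sign, so that only $t\le3$ (odd $d_F$) and $t\le2$ ($2\mid d_F$) contribute new entries, matching the table. The main obstacle is the ramified dyadic case: correctly determining, for every product $D$ of prime discriminants, whether $D$ is a square in $F_{\frak p}=\mathbb{Q}_2(\sqrt m)$, and then bookkeeping this simultaneously with the positivity constraint across all factorizations, is where the delicate mod-$8$ distinctions—and the bulk of the case work—arise.
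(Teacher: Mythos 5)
Your proposal is correct and follows essentially the same route as the paper's proof: both reduce via Theorem~\ref{6.1} and Proposition~\ref{6.3} to deciding, for each quadratic $F$, whether some product $D$ of prime discriminants dividing $d_F$ is positive (when $F$ is real) and a square in every dyadic completion, and both settle this by the same mod-$8$ local square criteria together with a finite case check (your pigeonhole observation for $r\ge 4$ is precisely the paper's reason that only $t\le 3$ for odd $d_F$, resp. $t\le 2$ for even $d_F$, can produce table entries). The paper merely organizes the enumeration as explicit cases ($t=0,1$; imaginary $t\ge 2$; real $t=2$; real $t\ge 3$) instead of through your factorization bookkeeping, but the mathematical content is identical.
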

\begin{proof}
Let $G^{(+)}=\Q(\sqrt{d_F},\,\sqrt{p_1^*},\cdots, \sqrt{p_t^*})$ as in Proposition\;\ref{6.3}.
If $t=0$, then $G^{(+)}=F$. Thus, $F$ has no quadratic unramified extensions at all, hence there is no classic 1-LNG $\mathcal{O}_F$-lattice by Theorem\;\ref{6.1}. Note that in this case $F=\Q(\sqrt{2}),  \Q(\sqrt{-1})$ or $\Q(\sqrt{-2})$.

Now let us assume $t\ge 1$. We discuss 4 cases to finish the proof.

\noindent {\bf Case 1}.  $t=1$.

If $d_F\equiv 1\pmod{4}$, then $F=\Q(\sqrt{p_1})$ with $p_1\equiv 1\pmod{4}$ or $F=\Q(\sqrt{-p_1})$ with $p_1\equiv 3\pmod{4}$.
In both cases, $d_F=p_1^*$ so that $G^{(+)}=F$. Thus, as in the case $t=0$, there is no classic 1-LNG $\mathcal{O}_F$-lattice.

Next assume $d_F\equiv 0\pmod{4}$. If $F$ is imaginary, then $F=\Bbb Q(\sqrt{-p_1})$ with $p_1\equiv 1 \pmod 4$ or $F=\Bbb Q(\sqrt{-2p_1})$. In the former case,  $G^{(+)}=F(\sqrt{p_1})$ is then only quadratic unramified extension of $F$. When $p_1\equiv 1 \pmod 8$, the dyadic prime of $F$ splits completely in $F(\sqrt{p_1})/F$ by \cite[63:1.Local Square Theorem]{omeara_quadratic_1963}. In this case, there is a classic ternary 1-LNG $\mathcal O_F$-lattice by Theorem\;\ref{6.1}.
 If $p_1\equiv 5 \pmod 8$, then $2$ is inert in $\Bbb Q(\sqrt{p_1})/\Bbb Q$. This implies that the dyadic prime of $F$ is inert in $F(\sqrt{p_1})/F$ by inspecting extension degree of the residue fields. In this case, one concludes using Theorem\;\ref{6.1} that there is no classic ternary 1-LNG $\mathcal O_F$-lattice.

For $F=\Bbb Q(\sqrt{-2p_1})$, we have $G^{(+)}=F(\sqrt{p_1^*})/F$. When $p_1\equiv \pm 1 \pmod 8$, the dyadic prime of $F$ splits completely in $F(\sqrt{p_1^*})/F$ by \cite[63:1.Local Square Theorem]{omeara_quadratic_1963}. In this case, there is a classic ternary 1-LNG $\mathcal O_F$-lattice by Theorem\;\ref{6.1}. When $p_1\equiv \pm 3 \pmod 8$, the dyadic prime of $F$ is inert in $F(\sqrt{p_1^*})/F$, so that $F$ has no  classic ternary 1-LNG lattice by Theorem\;\ref{6.1}.

Now suppose $F$ is real. Then $F=\Q(\sqrt{p_1})$ with $p_1\equiv 3\pmod{4}$ or $F=\Q(\sqrt{2p_1})$. If $F=\Q(\sqrt{p_1})$, then $G^{(+)}=F(\sqrt{p_1^*})=F(\sqrt{-p_1})$ is not totally real. It follows that $F$ has no quadratic unramified extension, hence there is no classic ternary 1-LNG lattice over $\cO_F$. If $F=\Q(\sqrt{2p_1})$ with $p_1\equiv 3\pmod{4}$, similar arguments show that $F$ admits no classic 1-LNG lattice. If $F=\Q(\sqrt{2p_1})$ with $p_1\equiv -3\pmod{8}$, then the dyadic prime of $F$ is inert in $G^{(+)}=F(\sqrt{p_1})$, so again there is no classic ternary 1-LNG lattice over $\mathcal O_F$. Finally, if $F=\Q(\sqrt{2p_1})$ with $p_1\equiv 1\pmod{8}$, then the quadratic extension $F(\sqrt{p_1^*})=F(\sqrt{p_1})$ satisfies the conditions in Theorem\;\ref{6.1}, whence the existence of classic ternary 1-LNG lattices over $\mathcal{O}_F$.

\medskip

\noindent {\bf Case 2}. $F$ is imaginary and $t\ge 2$.

If $p_{i}\equiv \pm 1 \pmod 8$ for some $1\leq i\leq t$, then $F(\sqrt{p_{i}^*})/F$ is a quadratic extension contained in $G^{(+)}$, hence unramified. Moreover, by  the local square theorem all dyadic primes of $F$ split completely in $F(\sqrt{p_{i}^*})/F$. By Theorem\;\ref{6.1}, there exist classic ternary 1-LNG lattices over $\cO_F$ in this case.

If there are two distinct indices $1\leq i\neq j \leq t$ such that $p_{i}\equiv p_{j}\mod 8$, then $p_i^*p_j^*=p_ip_j$ and the field $F(\sqrt{p_i^*p_j^*})=F(\sqrt{p_{i} p_{j}})$
 is an unramified quadratic extension by Proposition \ref{6.3}. Since $p_ip_j\equiv 1\pmod{8}$, all dyadic primes of $F$ split completely in  $F(\sqrt{p_ip_j})$. Thus, as in the previous case there exist classic ternary 1-LNG lattices over $\cO_F$.

In the remaining case, we must have $t=2$ and we may assume without loss of generality that $p_1\equiv 3\pmod{8}$ and $p_2\equiv -3\pmod{8}$. Hence $G^{(+)}=F(\sqrt{p_1^*},\,\sqrt{p_2^*})=F(\sqrt{-p_1},\,\sqrt{p_2})$. Note that $F=\Q(\sqrt{-p_1p_2})$ or $F=\Q(\sqrt{-2p_1p_2})$. In the former case, $2$ splits completely in $F$ and is inert in $\Q(\sqrt{p_2})$. This implies that the dyadic primes of $F$ are inert in $G^{(+)}=F(\sqrt{p_2})$. So by Theorem\;\ref{6.1}, there is no classic ternary 1-LNG lattices over $\cO_F$. In the latter case, $G^{(+)}$ contains the quadratic extension $F(\sqrt{-p_1p_2})$ of $F=\Q(\sqrt{-2p_1p_2})$, and the dyadic prime of $F$ splits completely in $F(\sqrt{-p_1p_2})$. So by Theorem\;\ref{6.1}, $F$ admits a classic ternary 1-LNG lattice.

\medskip

\noindent {\bf Case 3}. $F$ is real and $t=2$.

First assume $d_F\equiv 1\pmod{8}$. Then $d_F=p_1p_2$ with $p_1\equiv p_2\pmod{8}$, and  $G^{(+)}=F(\sqrt{p_1^*})=F(\sqrt{p_2^*})$. Similar to the imaginary case, if $p_1\equiv p_2\equiv 1\pmod{8}$,  there exist classic ternary 1-LNG lattices over $\mathcal O_F$. Otherwise either $G^{(+)}$ is not totally real, or $p_1\equiv p_2\equiv 5\pmod{8}$. In the first case, $F$ has no quadratic unramified extension; in the second case, the unique quadratic unramified extension of $F$ is $F(\sqrt{p_1})$, but the dyadic primes of $F$ are inert in $F(\sqrt{p_1})$. Therefore, there is no classic ternary 1-LNG lattice over $\mathcal O_F$ in either case.

Next assume $d_F\equiv 5\pmod{8}$. Then $d_F=p_1p_2$ with $p_1\not\equiv p_2\pmod{8}$. If $p_1$ or $p_2$ is congruent to $1$ modulo $8$, then $F$ has a classic ternary 1-LNG lattice. Otherwise, we may assume $p_1\equiv 3\pmod{8},\,p_2\equiv -1\pmod{8}$. Then $G^{(+)}=F(\sqrt{-p_1})$ is not real, hence there is no classic ternary 1-LNG lattice over $\mathcal O_F$.

Finally, consider the case $d_F\equiv 0\pmod{4}$. Then either $F=\Q(\sqrt{p_1p_2})$ with $p_1p_2\equiv 3\pmod{4}$ or $F=\Q(\sqrt{2p_1p_2})$. If $F=\Q(\sqrt{p_1p_2})$, we may assume without loss of generality that $p_1\equiv 1\pmod{4},\,p_2\equiv 3\pmod{4}$. Then $G^{(+)}=F(\sqrt{p_1},\,\sqrt{-p_2})$ contains precisely 3 quadratic extensions of $F$. The only real extension among these three extensions is $F(\sqrt{p_1})$. Arguing similarly as before, we see that $F$ has no classic ternary 1-LNG lattice if and only if $p_1\equiv -3\pmod{8}$.

 Now suppose $F=\Q(\sqrt{2p_1p_2})$. As before, if $p_1$ or $p_2$ is congruent to 1 modulo 8, or if $p_1\equiv p_2\pmod{8}$, then there exist classic ternary 1-LNG $\mathcal O_F$-lattices. Without loss of generality, we may now assume either $p_1\equiv -1\pmod{8},\,p_2\equiv 3\pmod{8}$, or $p_1\equiv 5\pmod{8},\,p_2\equiv 3\pmod{4}$. In the former case, the only real quadratic extension of $F$ in  $G^{(+)}=F(\sqrt{-p_1},\,\sqrt{-p_2})$ is $F(\sqrt{p_1p_2})$. But the dyadic prime of $F$ is inert in $F(\sqrt{p_1p_2})$ since $p_1p_2\equiv -3\pmod{8}$. In the other case, $G^{(+)}=F(\sqrt{p_1},\,\sqrt{-p_2})$ and the only real quadratic extension of $F$ in $G^{(+)}$ is $F(\sqrt{p_1})$, in which the dyadic prime of $F$ is again inert. Therefore, for $F=\Q(\sqrt{2p_1p_2})$, there is no classic ternary 1-LNG lattice over $F$ if and only if (up to permutation) $p_1\equiv -1\pmod{8},\,p_2\equiv 3\pmod{8}$, or $p_1\equiv 5\pmod{8},\,p_2\equiv 3\pmod{4}$.

\medskip

\noindent {\bf Case 4}. $F$ is real and $t\ge 3$.

As in previous discussions, if $p_i\equiv p_j\pmod{8}$ for two distinct indices $i,\,j$, or if some  $p_i$ is congruent to 1 modulo 8, then there exist classic ternary 1-LNG lattices over $\mathcal O_F$. The only remaining possibility is that $t=3$ and up to permutation, $p_1\equiv -1,\,p_2\equiv 3,\,p_3\equiv -3\pmod{8}$. Thus, $G^{(+)}=F(\sqrt{-p_1},\,\sqrt{-p_2},\,\sqrt{p_3})$.

If $F=\Q(\sqrt{p_1p_2p_3})$, then $G^{(+)}$ is  a biquadratic extension of $F$ and the only real quadratic subextension is $F(\sqrt{p_3})$. But the dyadic primes of $F$ are inert in $F(\sqrt{p_3})$. So $F$ has no classic ternary 1-LNG lattice in this case.

If $F=\Q(\sqrt{2p_1p_2p_3})$, then $F(\sqrt{p_1p_2p_3})$ is a  real quadratic extension of $F$ contained in $G^{(+)}$ and the dyadic primes of $F$ split completely in it. Therefore classic ternary 1-LNG $\mathcal O_F$-lattices exist.

\medskip

The theorem follows by summarizing all the results in the above discussions.
\end{proof}

As was shown in \cite[Example 3.8]{xu_indefinite_2020}, there are infinitely many integral (non-classic) ternary 1-LNG forms over the field $\Bbb Q(\sqrt{-5})$. However,
Theorem\;\ref{6.4} tells us in particular that  over  $\Bbb Q(\sqrt{-5})$ there is no classic 1-LNG lattice. On the other hand, there are classic 1-LNG lattices over $\Bbb Q(\sqrt{-14})$ according to Theorem\;\ref{6.4}.

\begin{ex}  Let $F=\Bbb Q(\sqrt{-14})$ and $m$ be an odd integer. Then the classic integral ternary quadratic forms
$$(2+11 \sqrt{-14})x^2+(106+6\sqrt{-14}) xy + (32-17\sqrt{-14}) y^2 + m^2 z^2  $$
represent all integers of $F$ over $\cO_\frak p$ for all $\frak p\in \Omega_F$ but cannot represent $2$ over $\cO_F$.
\end{ex}
\begin{proof} Since $2$ is ramified in $F=\Bbb Q(\sqrt{-14})/\Bbb Q$, there is a unique dyadic prime $\frak d$ in $F$. By Proposition \ref{6.3},  the genus field $G$ of $F$ is $F(\sqrt{-7})=F(\sqrt{2})$.
Since $-7(=1-8)$ is a square in $F_\frak d$, one concludes that $\frak d$ splits completely in $G/F$. By Theorem \ref{6.1}, there are infinitely many classic 1-LNG lattices. We will construct infinitely many classic 1-LNG integral quadratic forms explicitly. Consider
$$ M=(\mathcal O_F x+ \mathcal O_F y) \perp \mathcal O_F z \ \ \ \text{with} \ \ \ Q(x)=0, \ Q(y)=\sqrt{-14} \ \ \ \text{and} \ \ \ B(x, y)=Q(z)=1 $$ with the corresponding quadratic form $2xy+\sqrt{-14} y^2+ z^2$.
Then $M$ is a classic integral unimodular lattice. By \cite[Proposition 2.3 and Proposition 2.17]{xu_indefinite_2020}, one obtains that $M_\frak p$ is 1-universal over $\cO_\frak p$ for all $\frak p\in \Omega_F\setminus \infty_F$.  Therefore
$$ \theta_\frak p(O^+(M_\frak p)) \supseteq  \begin{cases}  \mathcal{O}_{\frak p}^\times ( F_\frak p^\times)^2 \ \ \ & \frak p\in \Omega_F\setminus \infty_F \\
 F_\frak p^\times \ \ \ & \frak p\in \infty_F  \end{cases} $$
by \cite[Lemma 2.2]{xu_indefinite_2020}.  By \cite[33:14.Theorem, 102:7 and 104:5.Theorem]{omeara_quadratic_1963}, the number $h(M)$ of (proper) classes in $\gen(M)$ is bounded by
   \begin{equation} \label{cls} h(M) \leq \bigg[\Bbb I_F : F^\times \bigg(\prod_{\frak p\in \infty_F} F_\frak p^\times\times \prod_{\frak p\in \Omega_F\setminus \infty_F} \mathcal{O}_{\frak p}^\times \bigg) \Bbb I_F^2\bigg] \leq [\pic(\mathcal O_F) : \pic(\mathcal O_F)^2]=[G:F]=2 \end{equation}
 where $\Bbb I_F$ denotes the id\`ele group of $F$. On the other hand, write
$$ X(M_\frak p, 2) = \{ \sigma\in O^+(F_\frak p M_\frak p): \ x_\frak p\in \sigma (M_\frak p) \} $$ where $x_\frak p\in M_\frak p$ with $Q(x_\frak p)=2$ for all $\frak p\in \Omega_F$.
 Since $$\theta_\frak p (X(M_\frak p, 2))=N_{\frak P\mid \frak p} (G_\frak P^\times) $$ by \cite[Satz 3(a)]{schulzepillot_Darstellung_1980} and \cite[Theorem 2.0]{Xu4}, one obtains that $2$ is a spinor exception for $\gen(M)$ by \cite[Theorem 4.1]{hsia_indefinite_1998}. In particular, $h(M)\geq 2$. Therefore $h(M)=2$ and the equality of (\ref{cls}) holds. Moreover $G$ is also the spinor class field of $\gen(M)$ in the sense of \cite[p.131]{hsia_indefinite_1998}.

 Since $3$ splits completely in $F$ and is inert in $\Bbb Q(\sqrt{2})/\Bbb Q$, the prime $\frak a=(3, 1-\sqrt{-14})$ of $F$ above $3$ is inert in $G/F$.
Define
$$L= (\frak a^{-1} x+ \frak a y) \perp \mathcal O_F z  .$$
Since
$$ \tau_{\pi_\frak a^{-1} x + y}(\pi_\frak a^{-1} x)= \pi_\frak a^{-1} x - \frac{2 \pi_\frak a^{-1}}{Q(y)+2\pi_\frak a^{-1}}(\pi_\frak a^{-1} x + y)= \frac{Q(y)}{2+Q(y) \pi_\frak a} x - \frac{2}{2+Q(y) \pi_\frak a}y \in M_\frak a $$
and
$$ \tau_{\pi_\frak a^{-1}x+y}(\pi_\frak a y) =\pi_\frak a y -\frac{2(\pi_\frak a Q(y)+1)}{Q(y)+2\pi_\frak a^{-1}} (\pi_\frak a^{-1} x+y)=- \frac{2(\pi_\frak a Q(y)+1)}{ 2+Q(y) \pi_\frak a} x-\frac{Q(y)\pi_\frak a^2}{ 2+Q(y) \pi_\frak a} y \in M_\frak a ,$$ this implies that $ \tau_{\pi_\frak a^{-1}x+y}(L_\frak a)=M_\frak a$ and $L\in \gen(M)$. Moreover, since the intersection ideal of $M$ and $L$ (see \cite[p.134]{hsia_indefinite_1998}) is $\frak a$ which is inert in $G/F$, one concludes that $L$ and $M$ are exactly representative elements of the two proper classes in $\gen(M)$ by \cite[Theorem 3.1]{hsia_indefinite_1998} and \cite[82:4]{omeara_quadratic_1963}.

The quadratic form corresponding to $M$ being $2xy+\sqrt{-14}y^2+z^2$, the identity
 \[
 2\cdot 3\cdot \sqrt{-14}+\sqrt{-14}\cdot (\sqrt{-14})^2+(4+\sqrt{-14})^2=2
 \]
 shows that $M$ represents $2$ over $\cO_F$. Since $2$ is a spinor exception for $\gen(M)$, one concludes that $L$ cannot represent $2$.

Since $\frak a^{-1}=(\frac{1+\sqrt{-14}}{3}, 3)$,  one can make the following substitution
$$ \begin{cases} u= \frac{1}{3} (1+ \sqrt{-14}) x+ 3 y \\
v = 2 x + (1-\sqrt{-14}) y \end{cases} $$
with $u, v\in \frak a x+ \frak a^{-1} y$. Then $\frak a x+ \frak a^{-1} y= \cO_F u + \cO_F v$ by \cite[81:8]{omeara_quadratic_1963}. The corresponding quadratic form of $L$ is
$$(2+11 \sqrt{-14})x^2+(106+6\sqrt{-14}) xy + (32-17\sqrt{-14}) y^2 + z^2 .  $$

 Let $L_m= (\frak a^{-1} x+ \frak a y) \perp \mathcal O_F m z \subseteq L $  with the corresponding quadratic form $$ (2+11 \sqrt{-14})x^2+(106+6\sqrt{-14}) xy + (32-17\sqrt{-14}) y^2 + m^2 z^2 . $$ Then $L_m$ cannot represent $2$ either. On the other hand,  $(L_m)_\frak p$ is 1-universal over $\cO_\frak p$ for all $\frak p\in \Omega_F\setminus \infty_F$ by \cite[Proposition 2.3 and Proposition 2.17]{xu_indefinite_2020} as desired.
\end{proof}

\

\noindent \emph{Acknowledgments}. Part of the paper comes from discussions during our visit to Xi'an Jiaotong University in July 2021. We would like to thank Prof.\;Ping Xi for the invitation and kind hospitality.  Zilong He and Yong Hu are supported by the National Natural Science Foundation of China (grant no.\,12171223). Fei Xu is supported by the National Natural Science Foundation of China (grant no.\,11631009).



\begin{thebibliography}{99}

\bibitem{Bh}  M. Bhargava, \begin{it} On the Conway-Schneeberger fifteen theorem\end{it}, Contemp. Math. {\bf 272} (1999), 27-37.



\bibitem{beli_representations_2006} C. N. Beli, \begin{it}Representations of integral quadratic forms over dyadic local fields\end{it}, Electron. Res. Announc. Amer. Math. Soc. \textbf{12} (2006), 100--112.

\bibitem{beli_representations_2019} C. N. Beli, \begin{it}Representations of quadratic lattices over dyadic local fields\end{it}, (2019), arXiv:1905.04552v1.	

\bibitem{beli_universal_2020} C. N. Beli, \begin{it}Universal integral quadratic forms over dyadic local fields\end{it}, (2020), arXiv:2008.10113v1.	




\bibitem{Di-27} L. E. Dickson, \begin{it}  Quaternary quadratic forms representing all integers \end{it},  Amer. J. Math. \textbf{49} (1927), 39-56.
	
\bibitem{Di}  L. E. Dickson, \emph{Studies in the Theory of Numbers},  Chelsea Publishing Company, 1957.


\bibitem{EG}  A. G. Earnest and B. L. K. Gunawardana, \begin{it}  Local criteria for universal and primitively universal quadratic forms \end{it}, J. Number Theory \textbf{225} (2021), 260-280.


\bibitem{EV} J. Ellenberg and A. Venkatesh, \begin{it} Local-global principles for representations of quadratic forms\end{it}, Invent. Math. \textbf{171} (2008), 257-279.


\bibitem{EH0} D. R. Estes and J. S. Hsia, \begin{it} Exceptional integers of some ternary quadratic forms\end{it}, Adv. in Math. {\bf 45}  (1982),  310-318.


\bibitem{EH} D. R. Estes and J. S. Hsia, \begin{it} Sum of three integer squares in complex quadratic fields\end{it}, Proc. Amer. Math. Soc. {\bf 89}  (1983),  211-214.


\bibitem{HeHu1} Z. He and Y. Hu, \begin{it}On $k$-universal quadratic lattices over unramified dyadic local fields\end{it}, (2022), arXiv:2204.02096.


\bibitem{HeHu2} Z. He and Y. Hu, \begin{it}On $n$-universal quadratic forms over dyadic local fields\end{it}, (2022), arXiv:2204.01997.


\bibitem{hsia_spinor_1975} J. S. Hsia, \begin{it}Spinor norms of local integral rotations I\end{it}, Pacific J. Math., \textbf{57} (1975), 199-206.


\bibitem{hsia_positivedefinite_1998} J. S. Hsia, Y. Kitaoka and M. Kneser, \begin{it}Representations of positive definite quadratic forms\end{it}, J.reine angew. Math. \textbf{301} (1978) 132-141.

\bibitem{hsia_indefinite_1998} J. S. Hsia, Y. Y. Shao and F. Xu, \begin{it}Representations of indefinite quadratic forms\end{it}, J. reine angew. Math. \textbf{494} (1998) 129-140.


\bibitem{Ishida} M. Ishida, \emph{The genus fields of algebraic number fields}, Lecture Notes in Mathematics, Vol. \textbf{555}. Springer-Verlag, Berlin-New York (1976).


\bibitem{Janusz} G. J. Janusz, \emph{Algebraic number fields, Second edition}, American Mathematical Society, Providence, RI (1996).



\bibitem{kim_2universal_1999} B. M. Kim, M.-H. Kim and B.-K. Oh \begin{it}2-Universal positive definite integral quinary quadratic forms\end{it}, Contemp. Math. \textbf{249} (1999), 51-62.

\bibitem{kim_2universal_1997} B. M. Kim, M.-H. Kim and S. Raghavan, \begin{it}2-Universal positive definite integral quinary diagonal quadratic forms\end{it}, Ramanujan J. \textbf{1} (1997), 333-337.

\bibitem{Kim} M.-H. Kim, \begin{it} Recent developments on universal forms\end{it},  Contemp.Math. \textbf{344} (2004), 215-228.

\bibitem{Ko} C. Ko, \begin{it} On the representation of a quadratic form as a sum of squares of linear forms\end{it}, Quart. J. Math. Oxford \textbf{8}  (1937), 81-98.

\bibitem{Mor} L. J. Mordell, \begin{it} A new Waring's problem with squares of linear forms\end{it}, Quart. J. Math. Oxford \textbf{1} (1930), 276-288.


\bibitem{Neu86} J. Neukirch, \emph{Class Field Theory}, Springer, 1986.

\bibitem{Neu1} J. Neukirch, \emph{Algebraic Number Theory}, Grundlehren 322, Springer, 1999.

\bibitem{omeara_integral_1958} O. T. O'Meara, \begin{it}The integral representation of quadratic forms over local fields\end{it}, Amer. J. Math. \textbf{80} (1958), 843-878.

\bibitem{omeara_quadratic_1963} O. T. O'Meara, \begin{it}Introduction to Quadratic Forms\end{it}, Springer-Verlag, Berlin (1963). 	
		
\bibitem{Oh} B.-K. Oh, \begin{it} Universal $\Bbb Z$-lattices with minimal rank\end{it}, Proc. of AMS \textbf{128} (2000), 683-689.

\bibitem{Ram} S. Ramanujan, \begin{it} On the expression of a number in form $a x^2+by^2 + c z^2 + du^2$ \end{it}, Proc. Cambridge Phil. Soc. \textbf{19} (1917), 11-21.

\bibitem{riehm_integral_1964} C. Riehm, \begin{it}On the integral representations of quadratic forms over local fields\end{it}, Amer. J. Math. \textbf{86} (1964), 25-62.

\bibitem{Ro}  A. Ross, \begin{it} On representation of integers by quadratic forms\end{it}, Proc. N.A.S. {\bf 18} (1932), 600-608.

\bibitem{schulzepillot_Darstellung_1980} R. Schulze-Pillot, \begin{it}Darstellung durch Spinorgeschlechter ternarer quadratischer Formen\end{it}, J. Number Theory \textbf{12} (1980) 529-540.

\bibitem{Voight} J. Voight, \begin{it}Quaternion algebras\end{it}, Springer Nature, Dartmouth College (2021).

\bibitem{Will} M. F. Willerding, \begin{it} Determination of all classes of positive  quaternary quadratic forms which represent all positive integers \end{it}, Bull. Amer. Math. Soc. \textbf{54} (1948)  334-337.

	








\bibitem{Xu4} F. Xu,  \begin{it} Representations of indefinite ternary quadratic forms over number fields\end{it}, Math. Z., {\bf 234} (2000),  115-144.


 \bibitem{xu_indefinite_2020} F. Xu and Y. Zhang, \begin{it} On indefinite and potentially universal quadratic forms over number fields\end{it},  Trans. Amer. Math. Soc. {\bf 375} (2022),  2459--2480.



%
%
%
%




\end{thebibliography}

\

\

Contact information of the authors:

\

Zilong HE

Department of Mathematics

Southern University of Science and Technology

Shenzhen 518055, China

Email: hezl6@sustech.edu.cn

\

Yong HU

Department of Mathematics

Southern University of Science and Technology


Shenzhen 518055, China

Email: huy@sustech.edu.cn

\

Fei XU

School of Mathematical Science

Capital Normal University

Beijing 100048, China

Email: 6091@cnu.edu.cn

\

\end{document}